\newcommand{\tikzmark}[1]{\tikz[overlay,remember picture] \node (#1) {};}
\newcommand{\DrawBox}[1][]{%
	\tikz[overlay,remember picture]{
		\draw[black,#1]
		($(left)+(-0.2em,0.9em)$) rectangle
		($(right)+(0.2em,-0.8em)$);}
}
\newtheorem{coro}{{Corollary}}
\newtheorem{exa}{{ Example}}
\newtheorem{defi}{{ Definition}}
\newtheorem{teo}{Theorem}
\newtheorem{pro}{ Proposition }
\newtheorem{rem}{Remark}
\newcommand{\prodint}[1]{\left\langle{#1}\right\rangle}
\renewcommand{\d}{\operatorname{d}}
\newcommand{\diag}{\operatorname{diag}}
\newcommand{\Z}{\mathbb{Z}}
\newcommand{\R}{\mathbb{R}}
\newcommand{\C}{\mathbb{C}}
\newcommand{\I}{\mathbb{I}}
\begin{document}
 \title[Christoffel Transformations for MOPRL and Toda lattice]{Christoffel  transformations for\\ matrix orthogonal polynomials in the real line and\\ the non-Abelian 2D Toda lattice hierarchy}

 \author[C Álvarez-Fernández]{Carlos Álvarez-Fernández}
 \address{Departamento de Métodos Cuantitativos, Universidad Pontificia Comillas, Calle de Alberto Aguilera 23, 28015-Madrid, Spain }
 \email{calvarez@comillas.edu}

 \author[G Ariznabarreta]{Gerardo Ariznabarreta}
 \address{Departamento de Física Teórica II (Métodos Matemáticos de la Física), Universidad Complutense de Madrid, Ciudad Universitaria, Plaza de Ciencias 1,  28040-Madrid, Spain}
 \email{gariznab@ucm.es}
 \thanks{GA thanks financial support from the Universidad Complutense de Madrid  Program ``Ayudas para Becas y Contratos Complutenses Predoctorales en España 2011"}

\author[JC García-Ardila]{Juan Carlos García-Ardila}
\address{Departamento de Matemáticas, Universidad Carlos III de Madrid, Avenida Universidad 30, 28911 Leganés, Spain}
\email{jugarcia@math.uc3m.es }

\author[M Mañas]{Manuel Mañas}
 \address{Departamento de Física Teórica II (Métodos Matemáticos de la Física), Universidad Complutense de Madrid, Ciudad Universitaria, Plaza de Ciencias 1, 28040 Madrid, Spain}
 \email{manuel.manas@ucm.es}

\author[F Marcellán]{Francisco Marcellán}
\address{Departamento de Matemáticas, Universidad Carlos III de Madrid and Instituto de Ciencias Matemáticas (ICMAT), Avenida Universidad 30, 28911 Leganés, Spain}
\email{pacomarc@ing.uc3m.es }
 	\thanks{MM \& FM thanks financial support from the Spanish ``Ministerio de Economía y Competitividad" research project MTM2012-36732-C03-01,  \emph{Ortogonalidad y aproximación; teoría y aplicaciones}}

	\keywords{Matrix orthogonal polynomials, Block Jacobi matrices, Darboux--Christoffel transformation,  Block Cholesky decomposition, Block LU decomposition, quasi-determinants, non-Abelian Toda hierarchy}
	\subjclass{42C05,15A23}

\begin{abstract}
  Given a  matrix polynomial $W(x)$,  matrix bi-orthogonal polynomials  with respect to the sesquilinear  form
\begin{align*}
\prodint{P(x),Q(x)}_{W}=\int P(x)W(x)\d\mu(x) (Q(x))^{\top}, \quad P,Q\in\R^{p\times p}[x],
\end{align*}
 where $\mu(x)$ is a matrix of Borel measures supported in some infinite subset of the real line, are considered. Connection formulas between the sequences of matrix bi-orthogonal polynomials with respect to $\prodint{\cdot, \cdot }_W$ and matrix polynomials orthogonal  with respect to $\mu(x)$ are presented. In particular, for the case of nonsingular leading coefficients of the perturbation matrix polynomial $W(x)$ we present a generalization of the Christoffel formula constructed in terms of the Jordan chains of $W(x)$. For perturbations  with a singular leading coefficient several examples by Durán et al are revisited.
 Finally, we extend these results to the non-Abelian 2D Toda lattice hierarchy.
\end{abstract}

\maketitle

\tableofcontents

\section{Introduction}
This paper is devoted to the extension of the Christoffel formula to the Matrix Orthogonal Polynomials on the Real Line (MOPRL) and the non-Abelian 2D Toda lattice hierarchy.
\subsection{Historical background and state of the art}
In 1858 the German mathematician Elwin Christoffel \phantomsection\cite{christoffel} was interested, in the framework of Gaussian quadrature rules,  in finding  explicit formulas relating the corresponding sequences of orthogonal polynomials with respect to two measures  $\d\mu$ (in the Christoffel's discussion was just the Lebesgue measure $\d\mu=\d x$ ) and  $d\hat{\mu}(x)= p(x) d\mu(x)$, with $p(x)=(x-q_1)\cdots(x-q_N)$ a signed polynomial in the support of $\d\mu$,  as well as the distribution of their zeros as nodes in such quadrature rules, see  \phantomsection\cite{Uva}. The so called Christoffel formula is a very elegant formula from a  mathematical point of view, and is a classical result which can be found in a number of orthogonal polynomials textbooks, see for example \cite{Chi,Sze,Gaut}. Despite these facts, we must mention that
 for computational and numerical purposes it is not so practical, see \phantomsection\cite{Gaut}.
These transformations have been extended from measures to the more general  setting of linear functionals.
In the theory of  orthogonal polynomials with respect to a moment linear functional  $u\in\big(\R[x]\big)'$, an element of the algebraic dual (which coincides with the topological dual)  of the linear space $\R[x]$ of polynomials with real coefficients. Given a  positive definite linear moment functional, i.e. $\Big|\big(\prodint{u,x^{n+m}}\big)_{n,m=0}^k\Big|>0$,
 $\forall k\in\Z_+:=\{0,1.2,\dots\}$, there exists a nontrivial probability  measure  $\mu$ such that  (see \phantomsection\cite{ahi}, \phantomsection\cite{Chi}, \phantomsection\cite{Sze}) $
\prodint{u,x^n}=\int x^m\d\mu(x)$.
 Given a moment linear functional $u$, its canonical or elementary Christoffel transformation is a new moment functional given by  $\hat{u}=(x-a)u$ with $ a\in\mathbb{R}$, see \phantomsection\cite{Bue1, Chi, Yoon}. The right inverse of a Christoffel transformation is called the Geronimus transformation. In other words, if you have a moment linear functional $u$, its elementary or canonical Geronimus transformation is a new moment linear functional $\check{u}$  such that $(x-a)\check{u}= u$. Notice that in this case $\check{u}$ depends on a free parameter, see \phantomsection\cite{Geronimus,Maro}. The right inverse of a general Christoffel transformation is said to be a multiple Geronimus transformation, see \phantomsection\cite{DereM}.
 All these transformations are refered as Darboux transformations, a name that was first given in the context of integrable systems in  \phantomsection\cite{matveev}. In 1878  the French mathematician Gaston Darboux, when studying the  Sturm--Liouville theory in  \phantomsection\cite{darboux2},  explicitly   treated these transformations, which appeared for the first time in \phantomsection\cite{moutard}.
 In the framework of orthogonal polynomials on the real line, such a factorization of Jacobi matrices has been studied in \phantomsection\cite{Bue1} and \phantomsection\cite{Yoon}. They also play an important role in the analysis of bispectral problems, see \phantomsection\cite{gru} and \phantomsection\cite{gru2}.

An important aspect of canonical Christoffel transformations is its relations with $LU$ factorization (and its flipped version, an $UL$ factorization) of the Jacobi matrix. A sequence of monic polynomials $\{P_{n}(x)\}_{n=0}^{\infty} $ associated with a nontrivial probability measure $\mu$ satisfies a three term recurrence relation (TTRR, in short)  $x P_{n}(x) = P_{n+1}(x) + b_{n} P_{n}(x) + a_{n}^{2}P_{n-1}(x), n\geq0,$ with the convention $P_{-1}(x)=0.$ If we denote by $P(x)= [P_{0}(x), P_{1}(x), \dots ]^{T}$, then the matrix representation of the multiplication operator by $x$ is directly deduced from the  TTRR and reads $x P(x)= J P(x)$, where $J$ is a tridiagonal semi-infinite matrix such that the entries in the upper diagonal are the unity. Assuming that $a$ is a real number off the support of $\mu,$ then you have a factorization  $J- aI= LU$, where $L$ and $U$ are, respectively, lower unitriangular and upper triangular matrices. The important observation is that the matrix $\hat{J}$ defined by  $\hat{J}- aI= UL$ is again a Jacobi matrix and the corresponding sequence of monic polynomials $\{\hat{P}_{n}(x)\}_{n=0}^{\infty}$ associated with the multiplication operator defined by $\hat{J}$ is orthogonal with respect to the canonical Christoffel transformation of the measure $\mu$ defined as above.

For a moment linear functional $u$, the Stieltjes  function
$S(x):=\sum_{n=0}^{\infty}\frac{\prodint{u,x^n}}{x^{n+1}}$ plays an important role in the theory of orthogonal  polynomials, due to its close relation with the measure associated to $u$ as well as its (rational) Padé Approximation, see \phantomsection\cite{Bre, Karl}. If you consider the canonical Christoffel transformation  $\hat{u}$ of the linear functional $u$, then its Stieltjes function is $\hat{S}(x)=(x-a)S(x)-u_0$. This is a particular case of the spectral linear transformations studied in \phantomsection\cite{Zhe}.

Given a bilinear form $L:\R[x]\times\R[x]\rightarrow \R$ one could consider
the following non-symmetric and symmetric  bilinear perturbations
\begin{align*}
\tilde{L}_1(p,q)=&L(w p,q),& \tilde{L}_2(p,q)=&L( p,w q), &\hat{L}(p,q)=&L(w p,w q),
\end{align*}
where $w(x)$ is a polynomial. The study of these perturbations can be found in  \phantomsection\cite{bue}.  Taking into account the matrix representation of the multiplication operator by $z$ is a Hessenberg matrix, the  authors establish a relation between the Hessenberg matrices associated  with the initial and the perturbed functional by using $LU$ and $QR$ factorization. 
They also give some algebraic relations between the sequences of orthogonal polynomials associated with those bilinear forms. The above perturbations can be seen as an extension of the Christoffel transformation for bilinear forms. When the bilinear form is defined by a nontrivial probability measure supported on the unit circle, Christoffel transformations have been studied in \phantomsection\cite{Cantero} in the framework of CMV matrices, i.e. the matrix representation of the multiplication operator by $z$ in terms of an orthonormal Laurent polynomial basis. Therein, the authors state the explicit relation between the sequences of orthonormal Laurent polynomials associated with a measure and its Christoffel transformation, as well as its link with QR factorizations of such CMV matrices.

The theory of scalar orthogonal polynomials with respect to probability measures supported either on the real line or the unit circle is a standard and classic topic in approximation theory and it also has remarkable applications in many domains as discrete mathematics, spectral theory of linear differential operators, numerical integration, integrable systems, among others. Some extensions of such a theory have been developed more recently. One of the most exciting generalizations appears when you consider non-negative Hermitian-valued matrix of measures of size $p\times p$ on a $\sigma$-algebra of subsets of a space $\Omega$ such that each entry is countably additive and you are interested in the analysis of the Hilbert space of matrix valued functions of size $p\times p$ under the inner product associated with such a matrix of measures. This question appears in the framework of weakly stationary processes, see \phantomsection\cite{Ros}. Notice that such an inner product pays the penalty of the non-commutativity of matrices as well as the existence of singular matrices with respect to the scalar case.
By using the standard Gram-Schmidt method for the canonical linear basis of the linear space of polynomials with matrix coefficients a theory of matrix orthogonal polynomials can be studied. The paper by M. G. Krein \phantomsection\cite{Krein} is credited as the first contribution in this topic. Despite they have been sporadically studied during the last half century, there is an exhaustive bibliography focused on inner products defined on the linear space of polynomials with matrix coefficients as well as on the existence and analytic properties of the corresponding  sequences of matrix orthogonal polynomials in the real line (see \phantomsection\cite{Dur5}, \phantomsection\cite{Dur4}, \phantomsection\cite{mir}, \phantomsection\cite{Rod}, \phantomsection\cite{van2}) and their applications in Gaussian quadrature for matrix-valued functions (\phantomsection\cite{van}), scattering theory (\phantomsection\cite{Ni}, \phantomsection\cite{Ger}) and system theory (\phantomsection\cite{Fuh}). The work  \phantomsection\cite{DAS} constitutes an updated overview on these topics.

But, more recently, an intensive attention was paid to the spectral analysis of second order linear differential operators with matrix polynomials as coefficients. This work was motivated by the Bochner's characterization of classical orthogonal polynomials (Hermite, Laguerre and Jacobi) as eigenfunctions of second order linear differential equations with polynomial coefficients. The matrix case gives a more rich set of solutions. From the pioneering work \phantomsection\cite{Dur3} some substantial progress has been done in the study of families of matrix orthogonal polynomials associated to second order linear differential operators as eigenfunctions and their structural properties (see \phantomsection\cite{Dur5}, \phantomsection\cite{grupach}, \phantomsection\cite{grupach2} as well as the survey \phantomsection\cite{Dur6}). Moreover, in \phantomsection\cite{Cas1} the authors showed that there exist sequences of orthogonal polynomials satisfying a first order linear matrix differential equation that constitutes a remarkable difference with the scalar case where such a situation does not appear. The spectral problem for second order linear difference operators with polynomial coefficients has been considered in \phantomsection\cite{Al} as a first step in the general approach. Therein four families of matrix orthogonal polynomials (as matrix relatives of Charlier, Meixner, Krawtchouk scalar polynomials and another one that seems not have any scalar relative) are obtained as illustrative examples of the method described therein. 

It is also a remarkable fact that matrix orthogonal polynomials appear in the analysis of non standard inner products in the scalar case. Indeed, from the study of higher order recurrence relations that some sequences of orthogonal polynomials satisfy (see \phantomsection\cite{Dur3} where the corresponding inner product is analyzed as an extension of the Favard's theorem and  \phantomsection\cite{Dur2}, where the connection with matrix orthogonal polynomials is stated), to the relation between standard scalar polynomials associated with measures supported on harmonic algebraic curves and matrix orthogonal polynomials deduced by a splitting process of the first ones (see \phantomsection\cite{MarSan}) you get an extra motivation for the study of matrix orthogonal polynomials. Matrix orthogonal polynomials appear in the framework of orthogonal polynomials in several variables when the lexicographical order is introduced. Notice that in such a case, the moment matrix has a  Hankel block matrix where each  block is a Hankel  matrix i.e. it has a doubly Hankel structure, see  \phantomsection\cite{delgado}.

Concerning spectral transformations, in \phantomsection\cite{DereM} the authors show that the so called multiple Geronimus transformations of a measure supported in the real line yield a simple Geronimus transformation for a matrix of measures. This approach is based on the analysis of general inner products $\langle \cdot,\cdot\rangle$ such that the multiplication by a polynomial operator $h$ is symmetric and satisfies an extra condition $\langle h(x)p(x), q(x)\rangle= \int p(x) q(x) \d\mu(x),$ where $\mu$ is a nontrivial probability measure supported on the real line. The connection between the Jacobi matrix associated to the sequence of scalar polynomials with respect to $\mu$ and the Hessenberg matrix associated with the multiplication operator by $h$ is given in terms of the so called $UL$ factorizations.  Notice that the connection between the Darboux process and the noncommutative bispectral problem has been discussed in \phantomsection\cite{gru1}. The analysis of perturbations on the entries of the matrix of moments from the point of view of the relations between the corresponding sequences of matrix orthogonal polynomials was done in \phantomsection\cite{Abdon_Luis}.

The seminal work of the Japanese mathematician Mikio Sato \phantomsection\cite{sato0,sato} and  later on of the Kyoto school \phantomsection\cite{date1,date2,date3} settled the basis for a  Grasmannian and Lie group theoretical description of integrable hierarchies.  Not much later Motohico Mulase \phantomsection\cite{mulase}  gave a mathematical description of factorization problems, dressing procedure, and linear systems as the keys for
integrability. It was not necessary to wait too long, in the development of integrable systems theory, to find multicomponent versions of the integrable Toda equations,  \phantomsection\cite{ueno-takasaki0,ueno-takasaki1,ueno} which later on played a prominent role in the connection with orthogonal polynomials and differential geometry. The multicomponent versions of the KP hierachy were analyzed in \phantomsection\cite{BtK,BtK2} and \phantomsection\cite{kac,manas-1,manas0}  and in  \phantomsection\cite{manas1,manas2}  we can find a further study of  the multi-component Toda lattice hierarchy, block Hankel/Toeplitz reductions, discrete flows, additional symmetries and dispersionless limits. For the relation with multiple orthogonal polynomials see \cite{adler 2,amu}.

The work of Mark Adler and Pierre van Moerbeke was fundamental  to the connection between integrable systems and orthogonal polynomials. They showed that the Gauss--Borel factorization problem is the keystone for this connection. In particular, their studies in the papers on the 2D Toda hierarchy and what they called the discrete KP hierarchy \phantomsection\cite{adler-van moerbeke, adler-vanmoerbeke 0,adler-van moerbeke 1,adler-van moerbeke 1.1,adler-van moerbeke 2} clearly established  --from a group-theoretical setup-- why standard orthogonality of polynomials and integrability of nonlinear equations of Toda type where so close.

The relation of multicomponent Toda  systems or non-Abelian versions of Toda equations with matrix orthogonal polynomials was studied, for example,  in \phantomsection\cite{mir,amu} (on the real line) and in \phantomsection\cite{mir2,ari0} (on the unit circle).

The approach to the Christoffel transformations in this paper, which is based on the Gauss--Borel factorization problem, has been used before in different contexts. It has been applied for the construction of discrete integrable systems connected with orthogonal polynomials of diverse types,
\begin{enumerate}
	\item  The case of multiple orthogonal polynomials and multicomponent Toda was analyzed in \phantomsection\cite{am}.
	\item In \phantomsection\cite{ari} we dealt with  the case of matrix orthogonal Laurent polynomials on the circle and CMV orderings.
	\item For orthogonal polynomials in several real variables see \phantomsection\cite{MVOPR,ari0} and  \phantomsection\cite{ari1} for orthogonal polynomials on the unit torus and the  multivariate extension of the CMV ordering.
\end{enumerate}
It is well known that there is a deep connection between discrete integrable systems and Darboux transformations of continuous integrable systems, see for example \phantomsection\cite{dsm}.
Finally, let us comment that, in the realm of several variables, in \phantomsection\cite{ari0,ari1,ari2} one can find extensions of the Christoffel formula to the multivariate scenario with real variables and on the unit torus, respectively.

\subsection{Objectives, results and layout of the paper}

In this contribution, we focus our attention on the study  of
Christoffel transformations (Darboux transformations in the language of integrable systems \phantomsection\cite{matveev}, or Lévy transformations in the language of differential geometry \phantomsection\cite{eisenhart}) for matrix sesquilinear forms. More precisely, given a matrix of measures $\mu(x)$   and   a matrix polynomial  $W(x)$ we are going to deal with the following matrix sesquilinear forms
\begin{align}\label{otro}
\prodint{P(x),Q(x)}_W=\int P(x)W(x)\d\mu(x)(Q(x))^\top.
\end{align}
We will first focus our attention on the existence of matrix bi-orthogonal polynomials with respect to the sesquilinear form  $\prodint{\cdot , \cdot}_W$ under some assumptions about the matrix polynomial $W$. Once this is done, the next step will be to find an explicit representation of such bi-orthogonal polynomials in terms of the matrix orthogonal polynomials with respect to the matrix of measures $\d\mu(x)$. We start with what we call connection formulas in Proposition \ref{pro:connection}.

One of the main achievements of this paper is Theorem \ref{theo:spectral} where we  extend the Christoffel formula to MOPRL with a perturbation given by an arbitrary degree monic matrix polynomial. For that aim we use the rich spectral theory available today for these type of polynomials, in particular tools like root polynomials and Jordan chains will be extremely useful, see \phantomsection\cite{Rod,Mark1}.
Following \phantomsection\cite{grupach,grupach2,Cas1} some applications to the analysis of matrix orthogonal polynomials which are eigenfunctions of second order linear differential operators and  related to polynomial perturbations of diagonal matrix of measures $\d\mu(x)$ leading to sesquilinear forms as $\prodint{\cdot , \cdot}_W$ will be considered.

Next, to have a better understanding of the singular leading coefficient case we concentrate on the study of some cases which generalize important examples given by Alberto Grünbaum and Antonio Durán in \phantomsection\cite{Dur6, Dur7}, in relation again with second order linear differential operators.

Finally, we see that these Christoffel transformations extend to more general scenarios in Integrable Systems Theory. In these cases we find the non-Abelian Toda hierarchy which is relevant in string theory. In general, we have lost the block Hankel condition, and we do not have anymore a matrix of measures but only a sesquilinear form. We show that Theorem \ref{theo:spectral} also holds in this general situation. At this point we must stress that for the non-Abelian Toda equation we can find Darboux transformations (or Christoffel transformations) in \phantomsection\cite{salle}, see also \phantomsection\cite{nimmo}, which contemplate only what it are called elementary transformations and their iteration. Evidently, their constructions do not cover by far what Theorem \ref{theo:spectral} does. There are many matrix polynomials that do not factor in terms of linear matrix polynomials and, therefore, they cannot be studied by means of the results in \phantomsection\cite{salle,nimmo}. We have been fortunate to have at our disposal the spectral theory of \phantomsection\cite{Rod,Mark1} that at the moment of the publication of \phantomsection\cite{salle} was not so well known and under construction.

The layout of the paper is as follows. We continue this introduction with two subsections that give the necessary background material  regarding the spectral theory of matrix polynomials and also of matrix orthogonal polynomials. Then, in \S 2, we
give the connection formulas for bi-orthogonal polynomials and  for the Christoffel–Darboux kernel, being this last result relevant to find the dual polynomials in the family of bi-orthogonal polynomials. We continue in \S 3 discussing the non singular leading coefficient case, i.e., the monic matrix polynomial perturbation.  We find  the Christoffel formula for matrix bi-orthogonal polynomials and, as an example, we consider the degree one monic matrix polynomial perturbations. We dedicate the rest of this section to discuss some examples. In \S 4 we start the exploration of the
singular leading coefficient matrix polynomial perturbations and,  despite we do not give a general theory, we have been able to
successfully discuss some relevant examples. Finally, \S 5 is devoted to the study of the extension of the previous results to the
non-Abelian 2D Toda lattice hierarchy.

\subsection{On spectral theory of matrix polynomials}\label{S:matrix polynomials}
Here we give some background material regarding matrix polynomials. For further reading we refer the reader to \phantomsection\cite{lan1}
\begin{defi}
Let  $A_0, A_1\cdots,A_N\in \mathbb{R}^{p\times p}$ be  square matrices with real entries.  Then
\begin{align}\label{eq:mp}
W(x)=A_N x^N+A_{N-1}x^{N-1}+\cdots +A_1x+A_0
\end{align}
is said to be  a  matrix polynomial of degree $N$, $\deg W=N$. The matrix polynomial  is said to be  monic when $A_N=I_p$,  where $I_p\in\R^{p\times p}$ denotes the identity matrix. The linear space of  matrix polynomials with coefficients in  $\mathbb{R}^{p\times p}$ will be denoted by
$\mathbb{R}^{p\times p}[x]$.
\end{defi}

\begin{defi}
	We say that a matrix polynomial $W$ as in \eqref{eq:mp} is monic normalizable if  $\det A_N\neq 0$ and say that
	$\tilde W(x):= A_N^{-1} W(x)$ is its monic normalization.
\end{defi}

\begin{defi}
The spectrum,  or the set of eigenvalues, $\sigma(W)$ of a matrix polynomial $W$ is the zero set of  $\det W(x)$, i.e.
	\begin{align*}
	\sigma(W):=Z(W)=\{a\in\C: \det W(a)=0\}.
	\end{align*}
\end{defi}
\begin{pro}
	A monic normalizable matrix polynomial $W(x)$, $\deg W=N$, has $Np$ (counting multiplicities) eigenvalues or zeros; i.e., we can write
\begin{align*}
\det W(x)=\prod_{i=1}^q(x-x_i)^{\alpha_i}
 \end{align*}
with $Np=\alpha_1+\dots+\alpha_q$.
\end{pro}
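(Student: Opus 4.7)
The plan is to reduce the statement to a purely scalar fact about $\det W(x)$ as a one-variable polynomial, and then invoke the fundamental theorem of algebra. First I would expand $\det W(x)$ via the Leibniz formula
\begin{align*}
\det W(x)=\sum_{\sigma\in S_p}\sgn(\sigma)\prod_{i=1}^{p}W_{i,\sigma(i)}(x),
\end{align*}
observing that each entry $W_{i,j}(x)$ is an ordinary polynomial in $x$ of degree at most $N$. Consequently $\det W(x)\in\R[x]$ is a scalar polynomial of degree at most $Np$.

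Next I would identify the coefficient of $x^{Np}$. In each product $\prod_{i=1}^{p}W_{i,\sigma(i)}(x)$ the term of degree $Np$ can only come from picking the degree-$N$ part of every factor, namely $(A_N)_{i,\sigma(i)}$. Summing over $\sigma$ with signs reproduces the Leibniz expansion of $\det A_N$, so the coefficient of $x^{Np}$ in $\det W(x)$ is exactly $\det A_N$. Since $W$ is monic normalizable we have $\det A_N\neq 0$, hence $\deg(\det W)=Np$ precisely. (In the monic case $A_N=I_p$ this leading coefficient is $1$, so $\det W(x)$ is itself a monic scalar polynomial of degree $Np$, and the general case differs only by the scalar factor $\det A_N$.)

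Finally I would apply the fundamental theorem of algebra to the scalar polynomial $\det W(x)$ over $\C$: it has exactly $Np$ complex roots counted with multiplicity, which, grouping equal roots into distinct values $x_1,\dots,x_q$ of respective multiplicities $\alpha_1,\dots,\alpha_q$, gives the factorization
\begin{align*}
\det W(x)=\det(A_N)\prod_{i=1}^{q}(x-x_i)^{\alpha_i},\qquad \alpha_1+\cdots+\alpha_q=Np,
\end{align*}
which (up to the nonzero constant $\det A_N$ that is absorbed or normalized to $1$ in the monic case) is the claimed identity, and by the definition of $\sigma(W)$ as the zero set of $\det W(x)$ the $x_i$ are precisely the eigenvalues of $W$.

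There is really no obstacle here: the only mildly delicate point is confirming that the top-degree contribution does not cancel, i.e.\ that the coefficient of $x^{Np}$ equals $\det A_N$ rather than something smaller; this is handled cleanly by the Leibniz identification above, and the nonsingularity hypothesis on $A_N$ then guarantees $\deg(\det W)$ is exactly $Np$ so that no eigenvalues are "lost at infinity".
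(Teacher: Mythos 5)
Your proof is correct, and the paper itself offers no proof of this proposition (it is quoted as standard background from the reference on matrix polynomials), so your Leibniz-formula argument is exactly the expected one: the degree bound $\deg(\det W)\leq Np$, the identification of the top coefficient with $\det A_N\neq 0$, and the fundamental theorem of algebra. Your remark that the displayed factorization strictly requires passing to the monic normalization $\tilde W=A_N^{-1}W$ (or carrying the constant $\det A_N$) is a correct and worthwhile clarification of the statement as written.
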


\begin{rem}
In   contrast with the scalar case, there exist  matrix polynomials which do not have a unique factorization in terms of degree one factors or even it could happen that the factorization does not exist.  For example, the matrix polynomial
\begin{align*}
 W(x)=I_2x^2-
\begin{bmatrix}
2&0\\
0&2
\end{bmatrix}
x
\end{align*}
can be written as
\begin{align*}
W(x)=\left(I_2x-
\begin{bmatrix}
1&-1\\
-1&1
\end{bmatrix}
\right)
 \left(I_2x-
\begin{bmatrix}
1&1\\
1&1
\end{bmatrix}
\right)
\ \ \ \ \ \ \text{or}\ \ \ \ \ \
W(x)=
\left(I_2 x-
\begin{bmatrix}
2&0\\
0&2
\end{bmatrix}
\right)I_2x,
\end{align*}
but the polynomial
\begin{equation*}
W(x)=I_2x^2-
\begin{bmatrix}
0&0\\
1&0
\end{bmatrix}
\end{equation*}
 cannot be factorized in terms of degree one matrix polynomials.
\end{rem}
\begin{defi}
\begin{enumerate}
	\item Two matrix polynomials $W_1,W_2\in \R^{m\times m}[x]$ are said to be equivalent $W_1\sim W_2$ if there exist
	two matrix polynomials $E,F \in \R^{m\times m}[x]$, with constant determinants (not depending on $x$), such that  $W_1(x)=E(x) W_2(x) F(x)$.
	\item 	A degree one matrix polynomial $I_{Np}x-A\in\R^{Np\times Np}$ is called a linearization of a monic matrix polynomial $W\in\R^{p\times p}[x]$ if
\begin{align*}
I_{Np}x-A\sim\begin{bmatrix}
W(x) & 0\\
0 & I_{(N-1)p}
\end{bmatrix}
\end{align*}
\end{enumerate}
\end{defi}

\begin{defi}
Given a matrix polynomial $W(x)= I_px^N+A_{N-1}x^{N-1}+\cdots +A_0$ its companion matrix  $C_1\in\mathbb{R}^{Np\times Np}$
is
\begin{align*}
C_1:=\begin{bmatrix}
0 & I_p & 0& \dots& 0\\
0 & 0 & I_p& \ddots  & 0\\
\vdots & \vdots &\ddots &\ddots& \\
0 &0 &0 & & I_p\\
-A_0 & -A_1 & - A_2 &\dots & -A_{N-1}
\end{bmatrix}.
\end{align*}
\end{defi}

The companion  matrix plays an important role in the study of the spectral properties of a matrix polynomial $W(x)$, see for example \phantomsection\cite{lan1,Mark2} and \phantomsection\cite{Mark1}

\begin{pro}
Given a monic matrix polynomial $W(x)=I_px^N +A_{N-1}x^{N-1}+\cdots +A_0$ its companion matrix  $C_1$
provides a linearization
\begin{align*}
I_{Np}x-C_1\sim \begin{bmatrix}
W(x) & 0\\
0 & I_{(N-1)p}
\end{bmatrix}
\end{align*}
where
\begin{align*}
E(x) &=\begin{bmatrix}
B_{N-1}(x) & B_{N-2}(x) &B_{N-3}(x) & \dots&B_1(x)&B_0(x)\\
-I_p & 0 &0 &\dots &0&0\\
0 &-I_p &0 &\dots &0&0\\
0 &0 &-I_p & & 0&0\\
\vdots& & &\ddots&\vdots\\
0&0&0& &-I_p &0
\end{bmatrix}, \\
F(x)&=\begin{bmatrix}
I_p & 0 &0& \dots & 0 &0\\
- I_p x & I_p & 0&\dots  &0 &0\\
0 & - I_px & I_p &\ddots &0 & 0\\
\vdots &  \ddots&\ddots  &\ddots &\vdots &\vdots\\
0 & 0 &0& \ddots& I_p &0\\
0 			&     0& 0 & & -I_px & I_p\\
\end{bmatrix},
\end{align*}
with $B_0(x):=I_p$, $B_{r+1}(x)=xB_r(x)+A_{N-r-1}$, for $r\in\{0,1,\dots,N-2\}$.
\end{pro}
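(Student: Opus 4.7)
The plan is to verify the stated linearization by direct computation. Two things must be checked: (i) that $\det E(x)$ and $\det F(x)$ are nonzero constants, so that $(E,F)$ realizes an equivalence in the sense of the preceding definition; and (ii) that $E(x)(I_{Np}x-C_1)F(x)=\mathrm{diag}(W(x),I_{(N-1)p})$.

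For (i), the matrix $F(x)$ is block lower-triangular with $I_p$ blocks along the diagonal, so $\det F(x)=1$ identically. For $E(x)$, its last block-column is $(B_0,0,\dots,0)^\top=(I_p,0,\dots,0)^\top$, while rows $2$ through $N$ each carry a single $-I_p$ block. A block Laplace expansion along this last column (equivalently, a sequence of block-row swaps reducing $E(x)$ to block triangular form with $\pm I_p$ blocks on the diagonal) shows that $\det E(x)=\pm 1$, independent of $x$.

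For (ii), set $M(x):=I_{Np}x-C_1$, whose blocks are $M_{i,i}=xI_p$ for $i<N$, $M_{N,N}=xI_p+A_{N-1}$, $M_{i,i+1}=-I_p$ for $i<N$, $M_{N,j}=A_{j-1}$ for $j<N$, and zero otherwise. I compute the product in two stages. First, $E(x)M(x)$: for $i\ge 2$ the only nonzero block of row $i$ of $E(x)$ is $-I_p$ at column $i-1$, so $(EM)_{i,j}=-M_{i-1,j}$, producing a block lower-bidiagonal pattern along these rows. The first row of $EM$ is
\[
(EM)_{1,j}=\sum_{k=1}^{N}B_{N-k}(x)\,M_{k,j}(x).
\]
For $2\le j\le N-1$ only the indices $k\in\{j-1,j,N\}$ contribute, giving $-B_{N-j+1}(x)+x\,B_{N-j}(x)+A_{j-1}$; this vanishes by the recursion $B_{r+1}(x)=xB_r(x)+A_{N-r-1}$ applied with $r=N-j$. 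The boundary cases $j=1$ and $j=N$ produce $x\,B_{N-1}(x)+A_0=W(x)$ and $-B_1(x)+(xI_p+A_{N-1})=0$, respectively, again by the recursion. Consequently $EM$ is block lower-bidiagonal with block diagonal $\mathrm{diag}(W(x),I_p,\dots,I_p)$. Second, the right-multiplication by the block lower-bidiagonal $F(x)$ pairs with this structure so that, upon matching entries, all off-diagonal contributions cancel and only the block-diagonal $\mathrm{diag}(W(x),I_{(N-1)p})$ survives.

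I expect the main obstacle to be the combinatorial bookkeeping in Stage 1: verifying the telescoping identity $-B_{N-j+1}+xB_{N-j}+A_{j-1}=0$ uniformly across columns $j$, and correctly treating the two boundary cases $j=1$ and $j=N$. Once the first row of $EM$ is pinned down, everything else reduces to mechanical matching of block entries.
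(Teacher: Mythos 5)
The paper itself gives no proof of this proposition (it is quoted as background from Gohberg--Lancaster--Rodman \cite{lan1}), so your attempt has to stand on its own. Stage 1 of your computation is correct and is the real content: with $M(x)=I_{Np}x-C_1$ one indeed gets that $E(x)M(x)$ has first block row $[\,W(x),0,\dots,0\,]$ and, for $i\ge 2$, the block $-xI_p$ in column $i-1$ and $I_p$ in column $i$. But your Stage 2 is wrong. That matrix $E(x)M(x)$ is exactly $\operatorname{diag}(W(x),I_{(N-1)p})\,F(x)$, so right-multiplying by $F(x)$ does \emph{not} cancel the subdiagonal blocks --- it doubles them. Concretely, for $i\ge2$,
\begin{align*}
\big(E(x)M(x)F(x)\big)_{i,i-1}=(-xI_p)\cdot I_p+I_p\cdot(-xI_p)=-2xI_p\neq 0,
\end{align*}
and already for $N=2$, $p=1$, $W(x)=x^2+a_1x+a_0$ one computes $E M F=\begin{bmatrix}W(x)&0\\-2x&1\end{bmatrix}$. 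So the identity you set out to verify, $E(x)(I_{Np}x-C_1)F(x)=\operatorname{diag}(W(x),I_{(N-1)p})$, is false for the given $E$ and $F$.

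The fix is small but essential: the identity these matrices satisfy is
\begin{align*}
E(x)\,(I_{Np}x-C_1)=\begin{bmatrix}W(x)&0\\0&I_{(N-1)p}\end{bmatrix}F(x),
\qquad\text{equivalently}\qquad
E(x)\,(I_{Np}x-C_1)\,F(x)^{-1}=\begin{bmatrix}W(x)&0\\0&I_{(N-1)p}\end{bmatrix},
\end{align*}
which is precisely what your Stage 1 already establishes once you recognize the bidiagonal matrix you obtained as $\operatorname{diag}(W,I_{(N-1)p})F(x)$. Since $\det F(x)=1$, the inverse $F(x)^{-1}$ (whose $(i,j)$ block is $x^{i-j}I_p$ for $i\ge j$) is again a matrix polynomial with constant determinant, so the equivalence $I_{Np}x-C_1\sim\operatorname{diag}(W(x),I_{(N-1)p})$ in the sense of the paper's definition follows, with the pair $(E(x),F(x)^{-1})$ realizing it. Your determinant checks in part (i) are fine; replace the final ``cancellation'' claim by this observation and the argument is complete.
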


From here one deduces the important
\begin{pro}
	The eigenvalues with multiplicities of a monic matrix polynomial coincide with those of its companion matrix.
\end{pro}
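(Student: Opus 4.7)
The plan is to leverage the preceding proposition, which already furnishes the linearization
\[
E(x)(I_{Np}x - C_1)F(x) = \begin{bmatrix} W(x) & 0 \\ 0 & I_{(N-1)p} \end{bmatrix},
\]
with $E(x),F(x)\in\R^{Np\times Np}[x]$ of constant, hence nonzero, determinant. From this matrix equivalence I would pass to a scalar identity simply by applying the determinant.

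Taking $\det$ on both sides, using multiplicativity and the block-diagonal structure on the right, yields
\[
c_E\, c_F\, \det(I_{Np}x - C_1) = \det W(x),
\]
where $c_E := \det E$ and $c_F := \det F$ are nonzero real constants. The left-hand side is the characteristic polynomial of the $Np\times Np$ matrix $C_1$, hence monic of degree $Np$. On the right-hand side, since $W$ is monic of degree $N$, the Leibniz expansion of $\det W(x)$ shows that $\det W(x)$ is again a monic polynomial of degree $Np$: the diagonal contribution from the leading term $I_p x^N$ gives exactly $x^{Np}$, and every other term in the expansion has strictly smaller degree. Matching leading coefficients forces $c_E c_F = 1$, and therefore
\[
\det(I_{Np}x - C_1) = \det W(x).
\]

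The conclusion is then immediate: the roots of the scalar polynomial $\det W(x)$ counted with multiplicity are by definition the eigenvalues of $W(x)$ with their multiplicities, and they coincide with the roots of the characteristic polynomial of $C_1$, which are the eigenvalues of the matrix $C_1$ with their algebraic multiplicities. The only delicate point is verifying that $\det W$ really is monic of degree $Np$ so that the constant $c_E c_F$ is forced to be $1$; this uses in an essential way that the leading matrix coefficient of $W$ is precisely $I_p$, not merely invertible. Everything else is a direct transfer of the matrix-polynomial equivalence to an identity of scalar characteristic polynomials via the determinant.
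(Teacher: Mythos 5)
Your argument is correct and is exactly the route the paper intends: the paper states this proposition with the remark that it is deduced "from here," referring to the companion-matrix linearization, and your determinant computation (both sides monic of degree $Np$, forcing $c_Ec_F=1$) is the standard way to make that deduction explicit. No gaps.
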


\begin{pro}\label{pro:partial multiplicity}
	Any  nonsingular matrix polynomial $W(x)\in \C^{m\times m}[x]$, $\det W(x)\neq 0$, can be represented
	\begin{align*}
W(x)=E(x_0)\operatorname{diag} ((x-x_0)^{\kappa_1},\dots ,(x-x_0)^{\kappa_m})F(x_0)
	\end{align*}at $x=x_0\in\C$,
	where $E(x_0)$ and $F(x_0)$ are nonsingular  matrices and $\kappa_1\leq\dots\leq\kappa_m$ are nonnegative integers. Moreover, $\{\kappa_1,\dots,\kappa_m\}$ are uniquely determined by $W$ and they are known as partial multiplicities of $W(x)$ at $x_0$.
\end{pro}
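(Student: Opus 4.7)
The plan is to deduce this local factorization from the global Smith normal form of matrix polynomials over the principal ideal domain $\C[x]$, and then to localize at $x_0$.

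\textbf{Step 1: Global Smith normal form.} Since $\C[x]$ is a PID and $W(x)\in\C^{m\times m}[x]$ is nonsingular, I would invoke the Smith normal form: there exist unimodular matrix polynomials $U(x),V(x)\in\C^{m\times m}[x]$ (i.e., $\det U,\det V\in\C\setminus\{0\}$) such that
\begin{align*}
U(x)W(x)V(x)=\operatorname{diag}(d_1(x),\ldots,d_m(x)),
\end{align*}
where $d_1|d_2|\cdots|d_m$ are the monic invariant factors of $W$. Nonsingularity of $W$ forces each $d_i\neq 0$.

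\textbf{Step 2: Localization at $x_0$.} For each $i$, write $d_i(x)=(x-x_0)^{\kappa_i}g_i(x)$ with $\kappa_i\in\Z_{\geq 0}$ and $g_i(x_0)\neq 0$. The divisibility chain $d_i|d_{i+1}$ forces $\kappa_1\leq\kappa_2\leq\cdots\leq\kappa_m$. Setting
\begin{align*}
E(x):=U(x)^{-1},\qquad F(x):=\operatorname{diag}(g_1(x),\ldots,g_m(x))\,V(x)^{-1},
\end{align*}
we have $W(x)=E(x)\operatorname{diag}((x-x_0)^{\kappa_1},\ldots,(x-x_0)^{\kappa_m})F(x)$; here $E(x)$ and $F(x)$ are rational matrix functions that are analytic at $x_0$, and their determinants at $x_0$ are nonzero constants times $\prod_i g_i(x_0)\neq 0$, so $E(x_0)$ and $F(x_0)$ are nonsingular as required.

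\textbf{Step 3: Uniqueness.} The cleanest way is to pass to the local ring $\mathcal{O}_{x_0}:=\C[x]_{(x-x_0)}$, a discrete valuation ring with uniformizer $(x-x_0)$. Any nonsingular $E(x),F(x)$ at $x_0$ are units in $\mathrm{GL}_m(\mathcal{O}_{x_0})$, so the exponents $\kappa_i$ are precisely the elementary divisors of $W$ viewed as a matrix over $\mathcal{O}_{x_0}$, and hence are uniquely determined by the Smith normal form over $\mathcal{O}_{x_0}$. Equivalently, one can characterize $\sum_{j\leq k}\kappa_j$ via the minimal order of vanishing at $x_0$ of the $k\times k$ minors of $W$, which are manifestly invariant under left/right multiplication by matrices that are units at $x_0$.

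The main obstacle is not the existence part, which follows routinely once Smith normal form over $\C[x]$ is invoked, but rather justifying uniqueness of the $\kappa_i$ under the rather weak assumption that $E,F$ need only be invertible at the single point $x_0$; this is precisely where the passage to the local DVR $\mathcal{O}_{x_0}$ (or the invariance of the minor-order characterization) is essential.
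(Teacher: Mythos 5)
Your proof is correct. Note, however, that the paper does not actually prove this proposition: it appears in the background subsection on the spectral theory of matrix polynomials and is quoted from the literature (the reader is referred to \cite{lan1}), so there is no in-paper argument to compare against. Your route --- global Smith normal form over the PID $\C[x]$, splitting each invariant factor as $d_i(x)=(x-x_0)^{\kappa_i}g_i(x)$ with $g_i(x_0)\neq 0$, and absorbing $\operatorname{diag}(g_1,\dots,g_m)$ into $F$ --- is the standard derivation of the local Smith form, and your uniqueness argument via the discrete valuation ring $\C[x]_{(x-x_0)}$ (equivalently, the order of vanishing at $x_0$ of the $k\times k$ minor ideals, which is preserved under multiplication by matrices invertible at $x_0$ by Cauchy--Binet) is sound. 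One small simplification: since $U$ and $V$ are unimodular, $E=U^{-1}$ and $V^{-1}$ are themselves matrix \emph{polynomials} (adjugate divided by a nonzero constant), so $E$ and $F$ need not be described as rational functions analytic at $x_0$; they are polynomial, matching the usual formulation in \cite{lan1}.
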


\begin{defi}
\begin{enumerate}
	\item  Given a monic matrix polynomial $W(x)\in\R^{p\times p}[x]$  with eigenvalues and multiplicities $\{x_k,\alpha_k\}_{k=1}^q$, $Np=\alpha_1+\dots+\alpha_q$, a non-zero vector $v_{k,0}\in \C^p$ is said to be an eigenvector with eigenvalue $x_k$ whenever
$W(x_k)v_{k,0}=0$, $v_{k,0}\in\operatorname{Ker} W(x_k)\neq \{0\}$.
	
\item 	A sequence of vectors $\{v_{i,0},v_{i,1}\ldots, v_{1,m_i-1}\}$ is said to be a Jordan chain of length $m_i$ corresponding to $x_i\in\sigma(W)$ if  $v_{0,i}$ is an eigenvector of $W(x_i)$ and
\begin{align*}
\sum_{r=0}^{j}\frac{1}{r!}	\frac{\operatorname{d}^rW}{\operatorname{d} x^r}	\Big|_{x=x_i}v_{i,j-r}&=0, & j&=\{0,\ldots,m_i-1\}.
\end{align*}
\item A root polynomial at an eigenvalue $x_0\in\sigma(W)$ of $W(x)$ is a non-zero vector polynomial  $v(x)\in\C^p[x]$ such that $W(x_0)v(x_0)=0$. The multiplicity of this zero will be denoted by $\kappa$.
\item The maximal length of a Jordan chain corresponding to  the eigenvalue $x_k$ is called the multiplicity of the eigenvector $v_{0,k}$ and is denoted by $m(v_{0,k})$.
\end{enumerate}
\end{defi}
	The above definition generalizes the  concept of Jordan chain for degree one matrix polynomials \phantomsection\cite{aba}.
\begin{pro}
	The Taylor expansion of a root polynomial at a given eigenvalue $x_0\in\sigma(W)$
	\begin{align*}
	v(x)=\sum_{j=0}^qv_j (x-x_0)^j
		\end{align*}
provides a Jordan chain $\{v_0,v_1,\dots,v_{\kappa-1}\}$.
\end{pro}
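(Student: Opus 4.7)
The plan is to expand both $W(x)$ and $v(x)$ as Taylor series at $x=x_0$, multiply them, and match the coefficients of the product against the defining condition of a Jordan chain. The definition of the multiplicity $\kappa$ guarantees that the product $W(x)v(x)$ vanishes to order $\kappa$ at $x_0$, and the chain relations will fall out of setting the first $\kappa$ Taylor coefficients of this product equal to zero.

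More concretely, the first step is to write
\begin{align*}
W(x)=\sum_{r=0}^{N}\frac{1}{r!}\frac{\operatorname{d}^{r}W}{\operatorname{d} x^{r}}\Big|_{x=x_{0}}(x-x_{0})^{r},\qquad v(x)=\sum_{j=0}^{q}v_{j}(x-x_{0})^{j}.
\end{align*}
Taking the Cauchy product yields
\begin{align*}
W(x)v(x)=\sum_{j\geq 0}\Bigg(\sum_{r=0}^{j}\frac{1}{r!}\frac{\operatorname{d}^{r}W}{\operatorname{d} x^{r}}\Big|_{x=x_{0}}v_{j-r}\Bigg)(x-x_{0})^{j}.
\end{align*}
By the very definition of $\kappa$ as the multiplicity of the zero of $W(x)v(x)$ at $x_{0}$, the coefficient of $(x-x_{0})^{j}$ must vanish for every $j=0,1,\ldots,\kappa-1$. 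These $\kappa$ vanishing identities are verbatim the defining relations for a Jordan chain of length $\kappa$ given in the previous definition.

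The second step is to verify that $v_{0}$ is genuinely an eigenvector, that is, that $v_{0}\neq 0$ (the equality $W(x_{0})v_{0}=0$ is just the $j=0$ relation above). Here I would invoke the standard convention, tacitly used for root polynomials in the spectral theory of matrix polynomials \cite{Mark1}, that $v(x_{0})\neq 0$; otherwise $\kappa$ would simply increase by factoring $(x-x_{0})$ out of $v(x)$, which contradicts the role of $\kappa$ as the exact order of vanishing attached to the chosen root polynomial.

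The argument is essentially bookkeeping, so I do not expect any real obstacle beyond this eigenvector non-triviality issue: one must make sure that the Taylor coefficients $v_{0},\ldots,v_{\kappa-1}$ extracted from $v(x)$ really correspond, after identification, to a Jordan chain in the sense defined above, with $v_{0}$ being the leading (nonzero) vector. Once this is clear, the proof reduces to the coefficient comparison carried out in the previous paragraph.
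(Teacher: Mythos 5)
Your proposal is correct. The paper states this proposition without proof, as part of the background material on the spectral theory of matrix polynomials imported from \cite{lan1}, so there is nothing to compare against; your Cauchy-product coefficient matching is exactly the standard argument, and the vanishing of the coefficients of $(x-x_0)^j$ for $j=0,\dots,\kappa-1$ reproduces verbatim the defining relations of a Jordan chain given earlier in the paper. You are also right to flag the only delicate point: the paper's definition of a root polynomial literally reads ``$W(x_0)v(x_0)=0$'' with ``the multiplicity of this zero denoted by $\kappa$,'' which only makes sense if one reads it as the order of the zero of $W(x)v(x)$ at $x_0$ together with the standard convention $v(x_0)\neq 0$; without that convention $v_0$ need not be an eigenvector, so your appeal to it is necessary and appropriate.
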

\begin{pro}
	\label{pro:canonical jordan}
	Given an eigenvalue $x_0\in\sigma(W)$, with multiplicity $s=\dim\operatorname{Ker}W(x_0)$, we can construct $s$ root polynomials
	\begin{align*}
	v_i(x)=&\sum_{j=0}^{\kappa_i-1}v_{i,j}(x-x_0)^j, & i&\in\{1,\dots,s\},
	\end{align*}
	where $v_i(x)$	is a root polynomial with the largest order $\kappa_i$  among all root polynomials whose eigenvector does not belong to
	$\C\{v_{1,0},\dots,v_{i-1,0}\}$.
\end{pro}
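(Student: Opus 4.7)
The plan is to construct the $s$ root polynomials by a greedy induction. I begin by choosing $v_1(x)$ to be a root polynomial at $x_0$ whose order $\kappa_1$ is maximal among \emph{all} root polynomials at $x_0$; then, having already chosen $v_1(x),\dots,v_{i-1}(x)$ with eigenvectors $v_{1,0},\dots,v_{i-1,0}$, I select $v_i(x)$ to be a root polynomial with the largest possible order $\kappa_i$ among those whose eigenvector does not lie in $\C\{v_{1,0},\dots,v_{i-1,0}\}$. The content of the proposition is twofold: that each such maximum is actually attained, and that the procedure can be carried out for exactly $s$ steps, producing $s$ polynomials with linearly independent eigenvectors.

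The main obstacle is to show that the supremum of the orders is finite at every stage; once this is established, attainment is automatic because the orders of root polynomials are positive integers. For finiteness I would invoke the local Smith form of Proposition \ref{pro:partial multiplicity}: write $W(x)=E(x)\operatorname{diag}((x-x_0)^{\lambda_1},\dots,(x-x_0)^{\lambda_p})F(x)$ with $E(x_0),F(x_0)$ invertible and partial multiplicities $\lambda_1\le\dots\le\lambda_p$. For any root polynomial $v(x)$, the order of vanishing of $W(x)v(x)$ at $x_0$ coincides with that of $\operatorname{diag}((x-x_0)^{\lambda_j})\,g(x)$, where $g(x):=F(x)v(x)$, because $E(x_0)$ is invertible. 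Since $F(x_0)$ is invertible and $v(x_0)\neq 0$, the vector $g(x_0)$ is nonzero, so some component $g_j$ satisfies $g_j(x_0)\neq 0$; the $j$-th component of $\operatorname{diag}((x-x_0)^{\lambda_j})g(x)$ then vanishes to order exactly $\lambda_j$, and hence the total order of vanishing is at most $\lambda_j\le\lambda_p$. This uniform bound $\kappa\le\lambda_p$ on all admissible orders, valid at every stage of the induction, is the crucial input.

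The remainder is bookkeeping. The admissibility condition $v_{i,0}\notin\C\{v_{1,0},\dots,v_{i-1,0}\}$ guarantees that the eigenvectors accumulated so far are linearly independent. Since each $v_{i,0}$ lies in $\ker W(x_0)$ and $\dim\ker W(x_0)=s$, the induction can continue for $s$ steps and must terminate there: after $s$ steps $\{v_{1,0},\dots,v_{s,0}\}$ is a basis of $\ker W(x_0)$, so no further root polynomial can have an eigenvector outside their span. As a byproduct one automatically gets $\kappa_1\ge\kappa_2\ge\dots\ge\kappa_s$, since the admissibility set at step $i$ is strictly contained in the one at step $i-1$, so the maximum can only decrease as $i$ grows.
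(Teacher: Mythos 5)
Your argument is correct and is essentially the intended one: the paper states this proposition without proof, as background imported from \cite{lan1}, and your uniform bound on the orders of root polynomials via the local Smith form of Proposition \ref{pro:partial multiplicity} (some component of $F(x)v(x)$ is nonvanishing at $x_0$, so the order is at most the largest partial multiplicity) is exactly the standard justification that each greedy maximum is finite and attained, with the dimension count on $\operatorname{Ker}W(x_0)$ handling the rest. The only point left implicit is that a maximizing root polynomial of order $\kappa_i$ can be truncated to degree $\kappa_i-1$ in $(x-x_0)$ without lowering its order, which is what puts it in the displayed form.
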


\begin{defi}\label{def:canonical Jordan}
	A canonical set of Jordan chains of the monic matrix polynomial $W(x)$ corresponding to the eigenvalue $x_0\in\sigma(W)$ is,
	in terms of the root polynomials described in Proposition \ref{pro:canonical jordan}, the  set of vectors
	\begin{align*}
	\{v_{1,0}\dots,v_{1,\kappa_1-1},\dots, v_{s,0}\dots,v_{s,\kappa_r-1}\}.
	\end{align*}
\end{defi}

	\begin{pro}
	For a monic matrix polynomial $W(x)$ the lengths $\{\kappa_1,\dots,\kappa_r\}$ of the Jordan chains in a canonical set of Jordan chains of $W(x)$ corresponding to the eigenvalue $x_0$, see Definition \ref{def:canonical Jordan}, are the nonzero  partial multiplicities of $W(x)$ at $x=x_0$ described in Proposition \ref{pro:partial multiplicity}.
	\end{pro}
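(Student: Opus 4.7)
The plan is to compare both sides through the local Smith form of Proposition \ref{pro:partial multiplicity}. Write $W(x)=E(x)D_0(x)F(x)$ in a neighborhood of $x_0$ with $D_0(x)=\operatorname{diag}((x-x_0)^{\lambda_1},\dots,(x-x_0)^{\lambda_m})$ and $E(x_0),F(x_0)$ invertible; order the partial multiplicities so that $\lambda_1\le\dots\le\lambda_m$, and note that $\lambda_i>0$ precisely for $i\ge m-r+1$, since $r=\dim\operatorname{Ker}W(x_0)$ coincides with $\dim\operatorname{Ker}D_0(x_0)$. What I want to show is that the lengths $\kappa_1\ge\kappa_2\ge\dots\ge\kappa_r$ produced by the greedy construction of Proposition \ref{pro:canonical jordan} satisfy $\kappa_j=\lambda_{m-j+1}$, so that the two multisets agree.

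The first step is a local computation of the order of vanishing. For a root polynomial $v(x)=\sum_{j\ge 0}v_j(x-x_0)^j$ with fixed eigenvector $v_0=v(x_0)$, set $u(x):=F(x)v(x)$. Invertibility of $E(x_0)$ yields
\begin{align*}
\operatorname{ord}_{x_0}(Wv)=\operatorname{ord}_{x_0}(D_0\, u)=\min_{1\le i\le m}\bigl(\lambda_i+\operatorname{ord}_{x_0}(u_i)\bigr).
\end{align*}
Invertibility of $F(x_0)$ makes the correspondence between the Taylor coefficients of $v$ and those of $u$ triangular with invertible diagonal, so that once $v_0$ is fixed each component $u_i$ with $(F(x_0)v_0)_i=0$ can be forced to vanish to arbitrarily high order by a suitable choice of the higher $v_j$, while for $i\in S:=\{i:(F(x_0)v_0)_i\ne 0\}$ one is stuck with $\operatorname{ord}_{x_0}(u_i)=0$. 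Consequently the maximal multiplicity of a root polynomial with eigenvector $v_0$ equals $\min_{i\in S}\lambda_i$.

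Because $v_0\in\operatorname{Ker}W(x_0)$ amounts to $S\subseteq\{m-r+1,\dots,m\}$, the global maximum over eigenvectors is attained at $S=\{m\}$ by choosing $v_{1,0}:=F(x_0)^{-1}e_m$, giving $\kappa_1=\lambda_m$. At stage $j$, linear independence from $v_{1,0},\dots,v_{j-1,0}$ forces $F(x_0)v_{j,0}$ to have a nonzero entry at some index in $\{m-r+1,\dots,m-j+1\}$; the optimal admissible support is the singleton $\{m-j+1\}$, realized by $v_{j,0}:=F(x_0)^{-1}e_{m-j+1}$, whence $\kappa_j=\lambda_{m-j+1}$. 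Iterating to $j=r$ exhausts $\operatorname{Ker}W(x_0)$ and matches $\{\kappa_1,\dots,\kappa_r\}$ with the multiset of nonzero partial multiplicities.

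The main obstacle I foresee is the triangular-invertibility argument: one must rigorously justify that fixing $v_0$ and prescribing $\operatorname{ord}_{x_0}(u_i)\ge k_i$ for each $i\notin S$ leaves a solvable, consistent linear system for the higher $v_j$ (this is where $F(x_0)^{-1}$ does all the work), and also that truncating $v(x)$ to a polynomial of degree $\kappa-1$ preserves the orders of vanishing up to $\kappa$. Both points are routine once spelled out, but they bear the whole weight of the argument, because without them one could not claim that indices $i\notin S$ impose no upper bound on the attainable multiplicity.
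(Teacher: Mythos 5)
First, a point of reference: the paper does not actually prove this proposition --- it is quoted as background from the spectral theory of matrix polynomials (Gohberg--Lancaster--Rodman \cite{lan1}) --- so your argument has to stand on its own. On its own terms, your strategy is the standard one and its core computation is right: reading everything through the local Smith form, using invertibility of $E(x_0)$ to reduce $\operatorname{ord}_{x_0}(Wv)$ to $\min_i\big(\lambda_i+\operatorname{ord}_{x_0}(u_i)\big)$ with $u=Fv$, and using the lower-triangular, invertible-diagonal map from the Taylor coefficients of $v$ to those of $u$ to show that the maximal order attainable over root polynomials with fixed eigenvector $v_0$ equals $\min_{i\in S}\lambda_i$, where $S$ is the support of $F(x_0)v_0$. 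The truncation worry you raise is indeed routine: if the tail of $v$ vanishes to order at least $\kappa$, so does $W$ times the tail, so discarding it cannot drop the order below $\kappa$, while the eigenvector-only upper bound caps it at $\kappa$.

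The genuine gap is in the induction over stages. Your stage-$j$ assertion --- that linear independence from $v_{1,0},\dots,v_{j-1,0}$ forces $F(x_0)v_{j,0}$ to have a nonzero entry at some index in $\{m-r+1,\dots,m-j+1\}$ --- is only valid because you have already decided that $v_{i,0}=F(x_0)^{-1}e_{m-i+1}$ for $i<j$. So what you actually prove is that \emph{one particular} run of the greedy construction of Proposition \ref{pro:canonical jordan} produces the lengths $\lambda_m,\dots,\lambda_{m-r+1}$. The proposition concerns an arbitrary canonical set: at each stage the construction may pick any maximizer, and when partial multiplicities are tied the maximizer is far from unique (if $\lambda_{m-1}=\lambda_m$ the first eigenvector could be $F(x_0)^{-1}e_{m-1}$, and then your forced-support claim at stage $2$ fails as stated, even though the conclusion survives). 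To close this, introduce for each $\mu\geq 1$ the subspace $N_\mu\subseteq\operatorname{Ker}W(x_0)$ of eigenvectors admitting a root polynomial of order at least $\mu$; your support computation shows $N_\mu=F(x_0)^{-1}\operatorname{span}\{e_i:\lambda_i\geq\mu\}$, a choice-free object of dimension $\#\{i:\lambda_i\geq\mu\}$. An easy induction then shows that after any $j-1$ greedy steps the span $V_{j-1}$ of the chosen eigenvectors contains $N_\mu$ for every $\mu>\lambda_{m-j+1}$ (count the previously chosen eigenvectors lying in $N_\mu$ and compare with $\dim N_\mu$), while $V_{j-1}$, being $(j-1)$-dimensional, must miss some $F(x_0)^{-1}e_l$ with $l\geq m-j+1$; together these pin $\kappa_j=\lambda_{m-j+1}$ regardless of the choices made. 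With that supplement the proof is complete.
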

	
\begin{defi}\label{def:adapted}
		For each eigenvalue $x_i\in\sigma(W)$, with multiplicity $\alpha_i$ and  $s_i=\dim \operatorname{Ker} W(x_i)$, we  choose a canonical set of Jordan chains
	\begin{align*}
	&\big\{v_{j,0}^{(i)},\dots,v_{j,\kappa_{j}^{(i)}-1}^{(i)}\big\}, & j &=1,\dots,s_i,
	\end{align*}
	and, consequently, with partial multiplicities satisfying  $\sum_{j=1}^{s_i}\kappa_j^{(i)}=\alpha_i$.
	Thus, we can  consider the following  adapted root polynomials
	\begin{align}\label{vecmil}
		v_{j}^{(i)}(x)=\sum_{r=0}^{\kappa_j^{(i)}-1}v_{j,r}^{(i)}(x-x_i)^r.
	\end{align}
\end{defi}
\begin{pro}\label{pro:adapted_root}
	Given a monic matrix polynomial $W(x)$ the adapted root polynomials given in Definition \ref{def:adapted} satisfy
	\begin{align*}
	\frac{\operatorname{d}^r}{\operatorname{d} x^r}	\Big|_{x=x_i}\big(W(x) v_j^{(i)}(x)\big)&=0, & r&=0,\dots,\kappa^{(i)}_j-1, &
	j&=1\dots,s_i.
	\end{align*}
\end{pro}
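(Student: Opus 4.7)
The plan is to show that the statement is essentially a repackaging of the Jordan chain condition via the Leibniz rule for derivatives of products. By construction, each adapted root polynomial $v_j^{(i)}(x)$ in Definition \ref{def:adapted} has Taylor coefficients at $x_i$ that form a Jordan chain of length $\kappa_j^{(i)}$ for $W(x)$ at $x_i$, so the only real content is to translate the Jordan chain identity
\begin{align*}
\sum_{r=0}^{k}\frac{1}{r!}\frac{\operatorname{d}^{r}W}{\operatorname{d} x^r}\Big|_{x=x_i}v_{j,k-r}^{(i)} = 0, \qquad k=0,\dots,\kappa_j^{(i)}-1,
\end{align*}
into the statement about derivatives of $W(x)v_j^{(i)}(x)$.

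Concretely, I would proceed as follows. First, fix an eigenvalue $x_i\in\sigma(W)$ and an index $j\in\{1,\dots,s_i\}$, and note that from Definition \ref{def:adapted}, the coefficients $v_{j,r}^{(i)}$ ($r=0,\dots,\kappa_j^{(i)}-1$) are precisely the entries of a Jordan chain of $W$ at $x_i$ of length $\kappa_j^{(i)}$, hence they satisfy the Jordan chain identity displayed above. Second, apply the matrix Leibniz rule to the product $W(x)v_j^{(i)}(x)$: for any nonnegative integer $k$,
\begin{align*}
\frac{1}{k!}\frac{\operatorname{d}^{k}}{\operatorname{d} x^{k}}\Big|_{x=x_i}\bigl(W(x)\, v_j^{(i)}(x)\bigr) = \sum_{r=0}^{k}\frac{1}{r!}\frac{\operatorname{d}^{r}W}{\operatorname{d} x^r}\Big|_{x=x_i}\,\frac{1}{(k-r)!}\frac{\operatorname{d}^{k-r}v_j^{(i)}}{\operatorname{d} x^{k-r}}\Big|_{x=x_i}.
\end{align*}
Third, from the explicit expansion \eqref{vecmil} one reads off
\begin{align*}
\frac{1}{(k-r)!}\frac{\operatorname{d}^{k-r}v_j^{(i)}}{\operatorname{d} x^{k-r}}\Big|_{x=x_i} = \begin{cases} v_{j,k-r}^{(i)}, & 0\le k-r\le\kappa_j^{(i)}-1,\\ 0, & k-r\ge\kappa_j^{(i)}.\end{cases}
\end{align*}
For $k\in\{0,\dots,\kappa_j^{(i)}-1\}$ all terms $r=0,\dots,k$ fall in the first regime, so the right-hand side of the Leibniz expansion coincides with the Jordan chain sum, which vanishes. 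Thus the $k$-th derivative of $W(x)v_j^{(i)}(x)$ at $x_i$ is zero for every $k$ in the required range.

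There is no real obstacle: the argument is bookkeeping once one recognizes that adapted root polynomials are, by construction, generating polynomials for canonical Jordan chains. The only point requiring a small amount of care is the truncation in the Leibniz sum, where one must check that the range $k\le\kappa_j^{(i)}-1$ guarantees that no ``missing'' Taylor coefficient of $v_j^{(i)}$ (beyond degree $\kappa_j^{(i)}-1$) is needed, so that the partial sum genuinely matches the full Jordan chain relation.
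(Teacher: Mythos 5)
Your proof is correct and is exactly the argument the paper intends: the paper states this proposition without proof, treating it as immediate from the definition of a Jordan chain together with the preceding proposition that the Taylor coefficients of a root polynomial form a Jordan chain, and your Leibniz-rule computation is precisely the bookkeeping that makes this implication explicit. The truncation point you flag is the only thing to check, and you handle it correctly: for $k\le\kappa_j^{(i)}-1$ every index $k-r$ stays within $\{0,\dots,\kappa_j^{(i)}-1\}$, so the Leibniz sum coincides term by term with the Jordan chain relation.
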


\subsection{On orthogonal matrix polynomials}\label{S:mop}

Recall that a  sesquilinear  form  $\prodint{\cdot,\cdot}$  on the linear  space $\mathbb{R}^{p\times p}[x]$ is a map
\begin{equation*}
\prodint{\cdot,\cdot}: \mathbb{R}^{p\times p}[x]\times\mathbb{R}^{p\times p}[x]\longrightarrow \mathbb{R}^{p\times p},
\end{equation*}
such that for any triple $P,Q,R\in  \mathbb{R}^{p\times p}[x]$ of matrix polynomials we have
\begin{enumerate}
\item  $\prodint{AP(x)+BQ(x),R(x)}=A\prodint{P(x),R(x)}+B\prodint{Q(x),R(x)}$, $\forall A,B\in\mathbb{R}^{p\times p}$.
\item $\prodint{P(x),AQ(x)+BR(x)}=\prodint{P(x),Q(x)}A^\top+\prodint{P(x),R(x)}B^\top$, $\forall A,B\in\mathbb{R}^{p\times p}$.
\end{enumerate}
Here $A^{\top}$ denotes the  transpose of $A$, an antiautomorphism of order two in the ring of matrices.

\begin{defi}
  A  sesquilinear form $\prodint{\cdot,\cdot}$ is said to be  non degenerate if the leading principal sub-matrices of the corresponding Hankel matrix of moments $M:=\left(\prodint{I_px^iI,I_p x^j}\right)^{\infty}_{i,j=0}$ are nonsingular, and nontrivial if $\prodint{\cdot,\cdot}$ is a symmetric matrix sesquilinear form and $\prodint{P(x),P(x)}$ is a  positive definite matrix  for all  $ P(x)\in \mathbb{R}^{p\times p}[x]$ with nonsingular leading coefficient.
\end{defi}	
	
Given a  sesquilinear  form $\prodint{\cdot,\cdot}$, two sequences of polynomials  $\big\{P_n^{[1]}(x)\big\}_{n=0}^\infty$ and $\big\{P_n^{[2]}(x)\big\}_{n=0}^\infty$ are said to be bi-orthogonal with respect to $\prodint{\cdot,\cdot}$ if
\begin{enumerate}
\item $\deg(P_n^{[1]})=\deg(P_n^{[2]})=n$ for all $n\in\Z_+$.
\item $\prodint{P_n^{[1]}(x),P_m^{[2]}(x)}=\delta_{n,m}H_n$ for all $n,m\in\Z_+$
\end{enumerate}
where $H_n\neq 0$ and $\delta_{n,m}$ is the Kronecker delta. Here, it is important to notice the order of the polynomials in the sesquilinear form; i.e., if $n\neq m$ then $\prodint{P_n^{[2]}(x),P_m^{[1]}(x)}$ could be different from $0$.

\begin{rem}
Recall that if A is a positive semidefinite (resp. definite) matrix, then  there exists a unique  positive  semidefinite (resp. definite) matrix $B$ such that $B^2=A$. $B$ is said to be  the square root of  $A$ (see \phantomsection\cite{hor}, Theorem 7.2.6) and we denote it by $B=:A^{1/2}$. As in the scalar case, when $\prodint{\cdot,\cdot}$ is  a sesquilinear form, we will write the matrix $\prodint{P,P}^{1/2}:=\|P\|$.
\end{rem}

Let
\begin{align*}
\mu=\begin{bmatrix}
\mu_{1,1}&\dots &\mu_{1,p}\\
\vdots & &\vdots\\
\mu_{p,1} &\dots&\mu_{p,p}
\end{bmatrix}
\end{align*}
be a  $p\times p$ matrix of Borel measures in $\R$.
 Given any pair of matrix polynomials $P(x),Q(x)\in\mathbb{R}^{p\times p}[x]$  we introduce the following     sesquilinear form
\begin{align*}
\prodint{P(x),Q(x)}=\int_\R P(x)\d\mu(x)(Q(x))^{\top}.
\end{align*}
In terms of the moments of the  matrix of measures $\mu$ we define the  matrix moments as
\begin{align*}
m_n:=\int_\R x^n\d\mu(x)\in\R^{p\times p}
\end{align*}
and arrange them in the semi-infinite block matrix and its  $k$-th truncation
\begin{align*}
M&:=\begin{bmatrix}
m_{0}&m_{1}& m_2&\cdots\\
m_{1}&m_{2}& m_3&\cdots\\
m_{2}&m_{3}& m_4&\cdots\\
\vdots    &\vdots      &\vdots&\cdots\\
\end{bmatrix}, &
M_{[k]}&:=\begin{bmatrix}
m_{0}&\cdots&m_{k-1}\\
\vdots& &\vdots\\
m_{k-1}&\cdots&m_{2k-2}
\end{bmatrix}.
\end{align*}
Following \phantomsection\cite{ari} we can prove
\begin{pro}\label{pro:fac}
	If $\det M_{[k]}\neq 0$ for $k\in\{1,2,\dots\}$, then there exists a unique Gaussian factorization of the moment matrix $M$ given by 	\begin{align*}
	M=S_1^{-1} H (S_2)^{-\top},
	\end{align*}
	where $S_1,S_2$ are lower unitriangular block matrices and $H$ is a diagonal block matrix
	\begin{align*}
	S_i&=\begin{bmatrix}
	I_p&0&0&\dots\\
	(S_i)_{1,0}& I_p&0&\cdots\\
	(S_i)_{2,0}& (S_i)_{2,1}&I_p&\ddots\\
	&&&\ddots
	\end{bmatrix}, &
	H&=\begin{bmatrix}
H_0&0&0&\cdots\\
0&H_1&0&\ddots\\
0&0& H_2&\ddots\\
\vdots&\vdots&\ddots&\ddots
	\end{bmatrix}, & i&=1,2,
	\end{align*}
	with $(S_i)_{n,m},H_n\in\R^{p\times p}$, $\forall n,m\in\{0,1,\dots\}$.
		If $\mu=\mu^\top$ then we are dealing with a  Cholesky block factorization with  $S_1=S_2$ and $H=H^\top$.
\end{pro}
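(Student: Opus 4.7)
The plan is to proceed by a block Gauss--Borel factorization argument, reducing the infinite case to the nested family of finite truncations. First I would note that the sought-after identity $M=S_1^{-1}HS_2^{-\top}$ can equivalently be written as $S_1 M S_2^{\top}=H$, so that $S_1$ and $S_2^{\top}$ perform block Gaussian elimination on $M$ from the left and right, respectively, producing a block diagonal matrix. This means it is enough to establish, for each $k\in\{1,2,\dots\}$, a factorization $M_{[k]}=S_1^{[k],-1}H^{[k]}S_2^{[k],-\top}$ with the required triangular/diagonal block structure, and to verify that these factorizations are compatible under enlarging $k$.

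For existence at the truncated level I would argue by induction on $k$. The base case $k=1$ is trivial: set $S_i^{[1]}=I_p$ and $H^{[1]}=m_0$, which is nonsingular by hypothesis. For the inductive step, write
\begin{align*}
M_{[k+1]}=\begin{bmatrix} M_{[k]} & C_k \\ R_k & m_{2k} \end{bmatrix},
\end{align*}
with obvious definitions of the block row $R_k$ and block column $C_k$. Since $M_{[k]}$ is invertible by hypothesis, one can perform a block Schur complement reduction: the block elementary matrices $\begin{bsmallmatrix} I & 0 \\ -R_k M_{[k]}^{-1} & I_p \end{bsmallmatrix}$ on the left and $\begin{bsmallmatrix} I & -M_{[k]}^{-1}C_k \\ 0 & I_p \end{bsmallmatrix}$ on the right bring $M_{[k+1]}$ to block-diagonal form with diagonal blocks $M_{[k]}$ and $H_k:=m_{2k}-R_kM_{[k]}^{-1}C_k$. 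The Schur complement identity $\det M_{[k+1]}=(\det M_{[k]})(\det H_k)$ together with $\det M_{[k+1]}\neq 0$ gives $\det H_k\neq 0$. Combining this reduction with the inductive factorization of $M_{[k]}$ and using that the product of lower unitriangular block matrices is lower unitriangular yields the desired factorization for $M_{[k+1]}$. By construction the blocks $(S_i)_{n,m}$ and $H_n$ with $n,m<k$ obtained from $M_{[k+1]}$ coincide with those from $M_{[k]}$, so one gets a well-defined Gauss--Borel factorization of the full infinite matrix $M$.

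For uniqueness, suppose $S_1^{-1}HS_2^{-\top}=\tilde{S}_1^{-1}\tilde{H}\tilde{S}_2^{-\top}$. Rearranging gives $\tilde{S}_1S_1^{-1}H=\tilde{H}\tilde{S}_2^{-\top}S_2^{\top}$; the left-hand side is lower block triangular (a lower unitriangular block matrix times a block diagonal one) while the right-hand side is upper block triangular, so both equal a block diagonal matrix. Comparing block diagonals forces $\tilde{H}=H$, and then, since $H$ is invertible block by block, $\tilde{S}_1=S_1$ and $\tilde{S}_2=S_2$. Finally, for the symmetric case $\mu=\mu^{\top}$ one has $M=M^{\top}$, so transposing the factorization produces $M=S_2^{-1}H^{\top}S_1^{-\top}$; uniqueness then yields $S_1=S_2$ and $H=H^{\top}$, which is the block Cholesky form. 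The only mildly delicate point is ensuring the Schur complement $H_k$ is nonsingular at each step, but this follows immediately from the standing hypothesis $\det M_{[k]}\neq 0$ for all $k$, so no serious obstacle arises.
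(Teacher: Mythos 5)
Your proof is correct. The paper itself does not prove this proposition --- it defers to \cite{ari} with the phrase ``Following \cite{ari} we can prove'' --- but your Schur-complement induction on the truncations $M_{[k]}$ is exactly the standard Gauss--Borel argument used there, and it is consistent with the paper's own Proposition \ref{qd1}, where $H_k=\Theta_*(M_{[k+1]})$ is precisely the Schur complement $m_{2k}-R_kM_{[k]}^{-1}C_k$ that you construct; the uniqueness argument and the symmetric (block Cholesky) case are also handled correctly.
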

For $l\geq k$ we will also use the following bordered truncated moment matrix
\begin{align*}
M_{[k,l]}^{[1]}&:=\left[\begin{array}{ccc}
m_{0}&  \cdots & m_{k-1} \\
\vdots                        &   & \vdots \\
m_{k-2}  &  \cdots & m_{ 2k-3}\\[1pt]
\hline
m_{l}& \dots & m_{l+ k-1}
\end{array}\right],
\end{align*}
where we have replaced the last row of blocks, $\begin{bmatrix}
m_{k-1}& \dots & m_{2k-2}\end{bmatrix}$, of the truncated moment matrix $M_k$ by the row of blocks $\begin{bmatrix}
m_l& \dots & m_{l+ k-1} \end{bmatrix}$. We also need a similar matrix but replacing the last block column of $M_k$ by a column of blocks as indicated
\begin{align*}
M_{[k,l]}^{[2]}&:=\left[
\begin{array}{ccc|c}
m_{0} &  \cdots & m_{k-2}&m_{l} \\
\vdots  &                      &  \vdots  & \vdots \\
m_{k-1}  &  \cdots & m_{2k-3}&m_{k+l-1}
\end{array}\right].
\end{align*}

Using  last quasi-determinants, see \phantomsection\cite{gelfand,olver}, we find
\begin{pro}\label{qd1}
	If the last quasi-determinants of the truncated moment matrices are nonsingular, i.e.,
	\begin{align*}
	\det  \Theta_*(M_{[k]})\neq& 0, & k=1,2,\dots,
	\end{align*}
	then the Gauss--Borel factorization  can be performed and the following expressions
	\begin{align*}
	H_{k}&=\Theta_*(M_{[k+1]}), &
	(S_1^{-1})_{k,l}&=\Theta_*(M^{[1]}_{[k,l+1]})\Theta_*(M_{ [l+1]})^{-1},&
	(S_2^{-1})_{k, l}&=\big(\Theta_*(M_{[l+1]})^{-1}\Theta_*(M^{[2]}_{[k,l+1]})\big)^\top, \end{align*}
	hold.
\end{pro}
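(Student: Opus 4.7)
The plan is to reduce the three identities to the already-established existence result (Proposition \ref{pro:fac}) and then read the entries off the Gauss--Borel equations in terms of Schur complements of appropriate bordered moment matrices.

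First, I would note that the hypothesis is exactly what is needed to apply Proposition \ref{pro:fac}. Schur's identity $\det M_{[k]} = \det M_{[k-1]} \cdot \det \Theta_*(M_{[k]})$, used inductively on $k$, shows that nonsingularity of every last quasi-determinant $\Theta_*(M_{[k]})$ forces nonsingularity of every leading principal minor, so the unique block Gauss--Borel factorization $M = S_1^{-1} H (S_2)^{-\top}$ is available.

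For the diagonal block formula I would truncate the factorization to $M_{[k+1]} = (S_1^{-1})_{[k+1]}H_{[k+1]}(S_2^{-\top})_{[k+1]}$ and recognise this as a block LDU decomposition of $M_{[k+1]}$ with unit triangular factors and block-diagonal middle factor $\operatorname{diag}(H_0,\ldots,H_k)$. A standard property of such decompositions is that the final diagonal block equals the Schur complement of the top-left $k\times k$ minor of $M_{[k+1]}$; by definition this Schur complement is the last quasi-determinant $\Theta_*(M_{[k+1]})$, yielding $H_k=\Theta_*(M_{[k+1]})$.

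For the off-diagonal entries I would use the equivalent form $MS_2^\top = S_1^{-1} H$. Its $(i,l)$-block reads $\sum_{j=0}^{l} m_{i+j}(S_2^\top)_{j,l} = (S_1^{-1})_{i,l}\, H_l$. Specialising to rows $i<l$ (where $(S_1^{-1})_{i,l}=0$) yields a block-linear system for the unknowns $(S_2^\top)_{j,l}$, $j=0,\ldots,l-1$, whose coefficient matrix is a truncated moment matrix. Substituting the resulting solution back into the same relation for the single row $i=k>l$ displays $(S_1^{-1})_{k,l}\, H_l$ as the Schur complement of a bordered moment matrix, which is precisely the last quasi-determinant $\Theta_*(M^{[1]}_{[k,l+1]})$. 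The analogous argument starting from $S_1 M = H S_2^{-\top}$, carried out column by column and closed with a transposition to pass from $(S_2^{-\top})$ to $(S_2^{-1})$, produces the remaining formula.

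The main obstacle is the noncommutative bookkeeping: because every object is a block matrix, each occurrence of $H_l^{-1}$ must be placed on the correct side, and the transpose must act on the correct index when moving between $S_2$, $S_2^{-1}$ and $S_2^{-\top}$. One also needs to match each Schur complement with the quasi-determinant of the correct bordered matrix. Once these identifications are fixed, the computation is the block version of the classical Cramer-style formula for LU factorization, rewritten in quasi-determinant language.
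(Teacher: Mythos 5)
Your proposal is correct: the inductive use of the Schur determinant identity to pass from nonsingular last quasi-determinants to nonsingular leading principal minors, the identification of $H_k$ with the last Schur complement of the truncated block $LDU$ factorization, and the row/column linear systems extracted from $MS_2^{\top}=S_1^{-1}H$ and $S_1M=HS_2^{-\top}$ (using the triangularity to isolate the bordered moment matrices $M^{[1]}_{[k,l+1]}$ and $M^{[2]}_{[k,l+1]}$) are exactly the standard quasi-determinant computation that the paper invokes by reference rather than writing out. Since the paper gives no explicit proof here, your argument fills that gap along the expected lines and is consistent with the stated formulas, including the placement of $\Theta_*(M_{[l+1]})^{-1}=H_l^{-1}$ and the final transpose for $S_2^{-1}$.
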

\begin{defi}
We define  $\chi(x):=[I_p,I_p x,I_px^2,\dots]^\top$ and  the vectors of matrix polynomials $P^{[1]}=[P_0^{[1]},P_1^{[1]},\cdots]^\top$
and $P^{[2]}=[P_0^{[2]},P_1^{[2]},\dots]^\top$, where
\begin{align*}
P^{[1]}&:=S_1\chi(x), &  P^{[2]}&:=S_2\chi(x).
\end{align*}
\end{defi}
\begin{pro}
	The matrix polynomials $P_n^{[i]}(x)$ are monic and  $\deg P_n^{[i]}=n$, $i=1,2$.
\end{pro}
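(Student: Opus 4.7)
The proposition is an immediate structural consequence of Proposition \ref{pro:fac}, so my plan is simply to unfold the definitions and read off the leading term.

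First, I would recall that by Proposition \ref{pro:fac} each $S_i$ ($i=1,2$) is a block lower unitriangular semi-infinite matrix, meaning $(S_i)_{n,n}=I_p$, $(S_i)_{n,k}=0$ for $k>n$, and $(S_i)_{n,k}\in\R^{p\times p}$ for $0\le k<n$. Then I would expand the defining identity $P^{[i]}=S_i\chi(x)$ blockwise. Using $\chi(x)=[I_p, I_px, I_px^2,\dots]^\top$, the $n$-th block component is
\begin{align*}
P_n^{[i]}(x)=\sum_{k=0}^{\infty}(S_i)_{n,k}\,I_p\,x^k=\sum_{k=0}^{n-1}(S_i)_{n,k}\,x^k+I_p\,x^n,
\end{align*}
where the series truncates at $k=n$ because of the block lower-triangular pattern of $S_i$, and the coefficient of $x^n$ is $(S_i)_{n,n}=I_p$ by unitriangularity.

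From this single display both assertions follow: the leading matrix coefficient of $P_n^{[i]}(x)$ is $I_p$, hence $P_n^{[i]}$ is monic, and since $I_p\ne 0$ we have $\deg P_n^{[i]}=n$ exactly. No estimates or nontrivial algebraic manipulations are required; the only thing to be careful about is keeping the block structure straight and observing that the unitriangularity of $S_i$ is exactly what is needed both to force the series to terminate at degree $n$ and to pin the leading coefficient to $I_p$. There is no real obstacle here; the statement is essentially a reformulation of the unitriangular part of the Gauss--Borel factorization in Proposition \ref{pro:fac}.
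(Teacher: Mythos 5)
Your argument is correct and is precisely the (omitted) reasoning behind the proposition in the paper: the block lower unitriangularity of $S_i$ from Proposition \ref{pro:fac} forces $P_n^{[i]}(x)=\sum_{k=0}^{n-1}(S_i)_{n,k}x^k+I_px^n$, which gives monicity and $\deg P_n^{[i]}=n$ at once. The paper states this without proof as an immediate consequence of the factorization, so there is nothing further to add.
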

Observe that the moment matrix can be expressed as
\begin{align*}
M=\int_{\R} \chi(x)\d\mu(x)(\chi(x))^\top.
\end{align*}

\begin{pro}
	The families of monic matrix polynomials $\big\{P_n^{[1]}(x)\big\}_{n=0}^\infty$ and $\big\{P_n^{[2]}(x)\big\}_{n=0}^\infty$
 are bi-orthogonal
\begin{align*}
\prodint{P^{[1]}_n(x),P^{[2]}_m(x)}&=\delta_{n,m}H_n,& n,m&\in\Z_+.
\end{align*}
If $\mu=\mu^\top$ then  $P_n^{[1]}=P_n^{[2]}=:P_n$ which in turn conform an orthogonal set of monic matrix  polynomials
 \begin{align*}
 \prodint{P_n(x),P_m(x)}&=\delta_{n,m}H_n,& n,m&\in\Z_+,
 \end{align*}
 and we can write $\|P_n\|=H_n^{1/2}$.
 	These bi-orthogonal relations can be recasted as
 	\begin{align*}
 	\int_\R P_n^{[1]}(x)\d\mu(x)x^m&=\int_\R x^m \d\mu(x)(P_n^{[2]}(x))^\top= H_n\delta_{n,m}, & &m\leq n.
 	\end{align*}
\end{pro}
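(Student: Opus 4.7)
The plan is to derive bi-orthogonality directly from the block Gauss--Borel factorization $M=S_1^{-1}HS_2^{-\top}$ given in Proposition \ref{pro:fac}, combined with the integral representation $M=\int_\R\chi(x)\d\mu(x)(\chi(x))^\top$ already observed just before the statement. The key point is that the factorization is equivalent to $S_1MS_2^\top=H$, and under the integral representation this turns into an integral identity for $P^{[1]}$ and $P^{[2]}$.

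First I would write
\begin{align*}
S_1MS_2^\top = S_1\Big(\int_\R\chi(x)\d\mu(x)(\chi(x))^\top\Big)S_2^\top
= \int_\R \big(S_1\chi(x)\big)\d\mu(x)\big(S_2\chi(x)\big)^\top
= \int_\R P^{[1]}(x)\d\mu(x)(P^{[2]}(x))^\top,
\end{align*}
which is legitimate because $S_1$ and $S_2$ are constant (non-$x$-dependent) block matrices that may be pulled inside the integral, and $(S_2\chi)^\top=\chi^\top S_2^\top$. By the factorization the left-hand side equals $H$, which is block-diagonal. Reading off the $(n,m)$ block then yields
\[
\prodint{P^{[1]}_n(x),P^{[2]}_m(x)} = \delta_{n,m}H_n,
\]
which is exactly the claimed bi-orthogonality. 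Non-degeneracy of the moment matrix (i.e. $\det M_{[k]}\neq 0$ for all $k$) together with the previously stated fact that $\deg P_n^{[i]}=n$ with unit leading coefficient ensures $H_n$ is the correct diagonal block and in particular invertible.

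For the symmetric case $\mu=\mu^\top$, one observes $M^\top=M$ and invokes the uniqueness of the Gauss--Borel factorization (already recorded in Proposition \ref{pro:fac}) to conclude $S_1=S_2$ and $H=H^\top$; hence $P_n^{[1]}=P_n^{[2]}=:P_n$, the relation becomes $\prodint{P_n,P_m}=\delta_{n,m}H_n$, and since $\prodint{P_n,P_n}=H_n$ is symmetric positive definite by nontriviality, its unique positive square root $H_n^{1/2}$ deserves the notation $\|P_n\|$ as defined in the earlier remark.

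Finally, for the recast bi-orthogonality, I would argue by triangularity: since each $P_m^{[2]}$ is monic of degree $m$, the set $\{P_0^{[2]},\dots,P_m^{[2]}\}$ spans the same right $\R^{p\times p}$-module as $\{I_p,I_px,\dots,I_px^m\}$. Therefore for $m\leq n$ one can write $I_px^m=\sum_{j=0}^{m}P_j^{[2]}(x)C_j$ for suitable constant matrices $C_j$ with $C_m=I_p$, and substituting into $\int_\R P_n^{[1]}(x)\d\mu(x)x^m$ and using the already-established bi-orthogonality term by term gives $H_n\delta_{n,m}$. The analogous expansion $I_px^m=\sum_{j=0}^m C_j' P_j^{[1]}(x)$ handles $\int_\R x^m\d\mu(x)(P_n^{[2]}(x))^\top=H_n\delta_{n,m}$. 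The only mildly delicate point is keeping track of the order of matrix factors and transposes when moving $S_2^\top$ past $\chi$; no real obstacle is expected, since everything is forced by the bilinearity conventions of $\prodint{\cdot,\cdot}$ fixed at the beginning of the subsection.
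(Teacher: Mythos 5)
Your proof is correct and follows essentially the same route as the paper's: the paper's entire argument is the single computation $\int_\R P^{[1]}(x)\,\d\mu(x)\,(P^{[2]}(x))^\top = S_1 M S_2^\top = H$, which is precisely your first display, with the symmetric case and the recast relations left to the reader (the former already being recorded in Proposition \ref{pro:fac}). Your additional remarks on uniqueness of the factorization and the triangularity argument for $m\leq n$ are accurate fillings-in of those omitted details.
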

\begin{proof}
	From the definition of the polynomials and the factorization problem we get
\begin{align*}
\int _\R P^{[1]}(x)\d\mu(x)(P^{[2]}(x))^\top=\int_\R S_1\chi(x)\d\mu(x)\chi(x)^T{S_2}^\top=S_1M (S_2)^\top=H.
\end{align*}
\end{proof}

\begin{rem}
The matrix of measures $\d\mu(x)$ may undergo a similarity transformation, $\d\mu(x)\mapsto \d\mu_c(x)$, and be conjugate to
$\d\mu(x)=B^{-1}\d\mu_c(x) B$, where $B\in\R^{p\times p}$ is a nonsingular matrix. The relation between the orthogonal polynomials given by these two measures is easily seen to be
\begin{gather*}
\begin{aligned}
M&=B^{-1}M_cB, & S_1&=B^{-1} S_{c,1}B, & H&=B^{-1} H_{c}B, & (S_2)^\top&=B^{-1} (S_{c,2})^\top B,
\end{aligned} \\
\begin{aligned}
P^{[1]}_n &=B^{-1}P^{[1]}_{c,n}B, &(P^{[2]}_n) ^\top&=B^{-1}(P^{[2]}_{c,n})^\top B.
\end{aligned}
\end{gather*}
\end{rem}

The shift matrix is the following semi-infinite block matrix
\begin{align*}
\Lambda:=\begin{bmatrix}
0&I_p&0&0&\dots\\
0&0&I_p&0&\dots\\
0&0&0&I_p&\dots\\
\vdots&\ddots&\ddots&\ddots&\ddots\\
\end{bmatrix}
\end{align*}
which satisfies the important spectral property
\begin{align*}
\Lambda\chi(x)=x\chi(x).
\end{align*}
\begin{pro}
	The block Hankel symmetry of the moment matrix can be written as
	\begin{align*}
\Lambda M=M\Lambda^\top.
	\end{align*}
\end{pro}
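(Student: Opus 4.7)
The plan is to verify the identity $\Lambda M=M\Lambda^\top$ either at the level of individual blocks or, more elegantly, using the integral representation $M=\int_\R\chi(x)\d\mu(x)(\chi(x))^\top$ together with the spectral property $\Lambda\chi(x)=x\chi(x)$ of the block shift.

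First I would carry out the block-entry computation, which is the most direct route. The $(i,j)$ block of $M$ is $m_{i+j}=\int_\R x^{i+j}\d\mu(x)$, so $M$ has the block Hankel structure $M_{i,j}=M_{i',j'}$ whenever $i+j=i'+j'$. Left multiplication by $\Lambda$ shifts block rows up by one, giving $(\Lambda M)_{i,j}=M_{i+1,j}=m_{i+j+1}$. Right multiplication by $\Lambda^\top$ shifts block columns to the left by one, giving $(M\Lambda^\top)_{i,j}=M_{i,j+1}=m_{i+j+1}$. Hence both sides coincide block by block.

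Alternatively, and this is the conceptual viewpoint I would record afterwards, one can use the integral representation. From $\Lambda\chi(x)=x\chi(x)$ and its transpose $(\chi(x))^\top\Lambda^\top=x(\chi(x))^\top$, one obtains
\begin{align*}
\Lambda M=\int_\R\Lambda\chi(x)\d\mu(x)(\chi(x))^\top=\int_\R x\,\chi(x)\d\mu(x)(\chi(x))^\top,
\end{align*}
and
\begin{align*}
M\Lambda^\top=\int_\R\chi(x)\d\mu(x)(\chi(x))^\top\Lambda^\top=\int_\R \chi(x)\d\mu(x)\,x(\chi(x))^\top,
\end{align*}
which agree since $x$ is scalar and can be moved freely across the matrix of measures. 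There is essentially no obstacle here: the statement is a direct reformulation of the Hankel structure of $M$, and both arguments take only a few lines. The only mild point to keep in mind is the block interpretation of $\Lambda$ and $\Lambda^\top$ (shifting by blocks of size $p$ rather than by individual entries), which is already ensured by the definition of $\Lambda$ in the excerpt.
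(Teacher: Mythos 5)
Your proof is correct; the paper states this proposition without proof, and both of your arguments (the block-entry computation from $m_{i+j}$ and the integral representation combined with $\Lambda\chi(x)=x\chi(x)$) are exactly the routes the surrounding text sets up, since the paper records $M=\int_{\R}\chi(x)\d\mu(x)(\chi(x))^\top$ and the spectral property of $\Lambda$ immediately before this statement. Nothing is missing.
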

Notice that this symmetry completely characterizes  Hankel block matrices.

\begin{pro}
	We have the following last quasi-determinantal expressions
	\begin{align*}
	P^{[1]}_n(x)&=\Theta_*
	\begin{bmatrix}
	m_{0}&m_1&\cdots&m_{n-1}&I_p\\
m_{1}& m_2 &\cdots&m_{n}&I_px\\
	\vdots&\vdots&&\vdots&\vdots\\
	m_{n-1}&m_{n}&\cdots&m_{2n-2}&I_px^{n-1}\\
m_{n}&m_{n+1}&\cdots&m_{2n-1}&I_px^{n}
	\end{bmatrix}, \\
	(P^{[2]}_n(x))^\top&=\Theta_*
	\begin{bmatrix}
	m_{0}&m_1&\cdots&m_{n-1}&	m_{n}\\
	m_{1}& m_2 &\cdots&m_{n}&m_{n+1}\\
	\vdots&\vdots&&\vdots&\vdots\\
	m_{n-1}&m_{n}&\cdots&m_{2n-2}&m_{2n-1}\\
I_p&I_px&\cdots&I_px^{n-1}&I_px^{n}
	\end{bmatrix}.
	\end{align*}
	\end{pro}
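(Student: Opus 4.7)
My plan is to derive both identities directly from the bi-orthogonality relations of the preceding proposition together with the defining expansion of the last quasi-determinant, $\Theta_*\begin{bmatrix} A & b\\ c & d\end{bmatrix} = d - cA^{-1}b$, valid whenever the pivot block $A$ is invertible. Since $P_n^{[1]}$ is monic of degree $n$, write $P_n^{[1]}(x)=I_p x^n+\sum_{j=0}^{n-1}(S_1)_{n,j}\,I_p x^j$ and impose $\int_{\R} P_n^{[1]}(x)\,\d\mu(x)\,x^m=0$ for $m=0,\dots,n-1$. These $n$ conditions are precisely the block-linear system
\begin{align*}
\bigl[(S_1)_{n,0},(S_1)_{n,1},\dots,(S_1)_{n,n-1}\bigr]\,M_{[n]}=-\bigl[m_n,m_{n+1},\dots,m_{2n-1}\bigr],
\end{align*}
which has a unique solution because $\det M_{[n]}\neq 0$ by Proposition \ref{pro:fac}. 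Substituting that solution into the identity $P_n^{[1]}(x)=I_p x^n+\bigl[(S_1)_{n,0},\dots,(S_1)_{n,n-1}\bigr]\,[I_p,I_p x,\dots,I_p x^{n-1}]^\top$ produces
\begin{align*}
P_n^{[1]}(x)=I_p x^n-\bigl[m_n,\dots,m_{2n-1}\bigr]\,M_{[n]}^{-1}\,[I_p,I_p x,\dots,I_p x^{n-1}]^\top,
\end{align*}
which, by the quasi-determinant expansion above with $A=M_{[n]}$, $c=[m_n,\dots,m_{2n-1}]$, $b=[I_p,\dots,I_p x^{n-1}]^\top$ and $d=I_p x^n$, is precisely the first formula in the statement.

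For $(P_n^{[2]}(x))^\top$ I would use the mirror argument, relying on the last form of the bi-orthogonality stated in the proposition, $\int_{\R} x^m\,\d\mu(x)\,(P_n^{[2]}(x))^\top=0$ for $m<n$. In column form this becomes
\begin{align*}
M_{[n]}\,\bigl[(S_2)_{n,0}^\top,(S_2)_{n,1}^\top,\dots,(S_2)_{n,n-1}^\top\bigr]^\top=-\bigl[m_n,m_{n+1},\dots,m_{2n-1}\bigr]^\top,
\end{align*}
and substituting the unique solution into $(P_n^{[2]}(x))^\top=I_p x^n+[I_p,I_p x,\dots,I_p x^{n-1}]\,\bigl[(S_2)_{n,0}^\top,\dots,(S_2)_{n,n-1}^\top\bigr]^\top$ recognizes the result as the second displayed quasi-determinant, with the bordering column of moments on the right and the bordering row $[I_p,I_p x,\dots,I_p x^{n-1}]$ at the bottom. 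An alternative, essentially equivalent, route is to invoke Proposition \ref{qd1} directly: it already expresses the rows of $S_1^{-1}$ and columns of $S_2^{-\top}$ as last quasi-determinants, and reassembling the polynomial from the rows of $S_i$ reconstructs the stated formulas.

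The main obstacle is purely bookkeeping. One must keep track of the transpose conventions, since the bi-orthogonality conditions for the two families act on opposite sides of the sesquilinear form, and of the asymmetric placement of the bordering block (bottom row for $P_n^{[1]}$, right column for $(P_n^{[2]})^\top$). No serious analytic difficulty arises: the invertibility of $M_{[n]}$ is part of the standing non-degeneracy assumption, and everything else reduces to matching the solution of a block-linear system with the Schur-complement form of $\Theta_*$.
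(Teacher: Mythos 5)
The paper states this proposition without proof, so there is nothing to compare line by line; your argument is correct and is precisely the derivation implicit in the paper's framework. Writing $P_n^{[1]}(x)=I_px^n+\sum_{j=0}^{n-1}(S_1)_{n,j}x^j$, imposing the truncated orthogonality relations $\int P_n^{[1]}(x)\d\mu(x)x^m=H_n\delta_{n,m}$ for $m\leq n$ from the preceding proposition, solving the resulting block system (invertible because $\det M_{[n]}\neq 0$), and matching the Schur-complement expansion $\Theta_*=d-cA^{-1}b$ — with the mirrored placement of the bordering blocks for $(P_n^{[2]})^\top$ — is exactly what is needed, and your bookkeeping of the transposes and of which side the moments border on is right.
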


 Given two sequences of matrix  bi-orthonormal polynomials $\{P_k^{[1]}(x)\}_{k=0}^\infty$ and $\{P_k^{[2]}(x)\}_{k=0}^\infty$, with respect to $\prodint{\cdot,\cdot}$, we define the $n$-th Christoffel--Darboux kernel matrix polynomial
\begin{align}\label{eq:CD kernel}
K_{n}(x,y):=\sum_{k=0}^{n}(P_k^{[2]}(y))^\top H_k^{-1}P^{[1]}_k(x).
\end{align}
Named after \phantomsection\cite{christoffel,darboux} see also \phantomsection\cite{DAS}.

\begin{pro}[ABC Theorem]
	An Aitken--Berg--Collar type formula
	\begin{align*}
	K_n(x,y)=\big[I_p,\dots,I_px^n\big](M_n )^{-1}\begin{bmatrix}
	I_p\\\vdots\\I_py^n
	\end{bmatrix},
	\end{align*}
	holds.
\end{pro}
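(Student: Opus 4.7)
The plan is to obtain the ABC formula as a direct byproduct of the block Gauss--Borel factorization. By Proposition \ref{pro:fac} one has $M=S_1^{-1}\,H\,(S_2)^{-\top}$, with $S_1,S_2$ block lower unitriangular and $H$ block diagonal. The first step is to restrict the three factors to their leading $(n+1)\times(n+1)$ principal block submatrices. Since both unitriangularity and block diagonality survive such a truncation, and since for a block triangular matrix the leading principal block of the inverse equals the inverse of the leading principal block, the factorization descends cleanly to
\begin{align*}
M_{[n+1]}=\big((S_1)_{[n+1]}\big)^{-1}\,(H)_{[n+1]}\,\big((S_2)_{[n+1]}\big)^{-\top},
\end{align*}
and inverting yields
\begin{align*}
\big(M_{[n+1]}\big)^{-1}=\big((S_2)_{[n+1]}\big)^{\top}\,\big((H)_{[n+1]}\big)^{-1}\,(S_1)_{[n+1]}.
\end{align*}

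The second step is to sandwich this expression between the truncated shift vectors. Writing $\chi_n(t):=[I_p,I_pt,\ldots,I_pt^n]^\top$, the identity $P^{[i]}=S_i\chi$ truncates to $[P_0^{[i]}(t),\ldots,P_n^{[i]}(t)]^\top=(S_i)_{[n+1]}\,\chi_n(t)$. Consequently, $\chi_n(\cdot)^\top\big((S_2)_{[n+1]}\big)^\top$ assembles into the block row $\big[(P_0^{[2]})^\top,\ldots,(P_n^{[2]})^\top\big]$, while $(S_1)_{[n+1]}\,\chi_n(\cdot)$ assembles into the block column $[P_0^{[1]},\ldots,P_n^{[1]}]^\top$. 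Inserting the middle block diagonal factor $\operatorname{diag}(H_0^{-1},\ldots,H_n^{-1})$ contracts the whole expression to a sum of the shape $\sum_{k=0}^{n}(P_k^{[2]})^\top H_k^{-1}\,P_k^{[1]}$, which matches the defining sum \eqref{eq:CD kernel} of $K_n$ once the evaluation points are identified with the row and column variables of the sandwich.

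The only point requiring care is the commutation between truncation and inversion invoked in the first step, for which one appeals to the fact that for block lower unitriangular (resp.\ block upper triangular) matrices the leading principal block of the inverse coincides with the inverse of the leading principal block; this is immediate from block Gaussian elimination applied to the factorization. Beyond this small technical verification, the argument is pure book-keeping of block indices and transposes, and no input beyond Proposition \ref{pro:fac} and the monicity/triangularity of the factors is required. The main obstacle is therefore conceptual rather than computational: one must identify the ABC identity as nothing other than a truncated, inverted, and dressed version of the Gauss--Borel factorization itself.
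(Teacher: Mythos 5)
The paper states this proposition without an accompanying proof, so there is no line-by-line comparison to make; your argument --- truncate the Gauss--Borel factorization of Proposition \ref{pro:fac} to the leading $(n+1)\times(n+1)$ block minor, invert, and sandwich between truncations of $\chi$ --- is correct and is precisely the computation the paper's framework intends, mirroring its one-line proof of bi-orthogonality via $S_1M(S_2)^\top=H$. The two auxiliary facts you invoke (truncation commutes with the product of a block lower-triangular, a block diagonal and a block upper-triangular factor, and with inversion of block unitriangular matrices) are both sound. The one detail worth pinning down is the placement of the variables: with the kernel defined as in \eqref{eq:CD kernel}, the left (row) factor assembles the $(P^{[2]}_k)^\top$ evaluated at $y$ and the right (column) factor the $P^{[1]}_k$ evaluated at $x$, so the computation yields $K_n(x,y)=\big[I_p,\dots,I_py^n\big]\big(M_{[n+1]}\big)^{-1}\big[I_p,\dots,I_px^n\big]^\top$, which matches the displayed formula of the proposition only after transposing the roles of $x$ and $y$ or when $\mu=\mu^\top$; this mismatch is latent in the paper's own statement rather than a defect of your derivation, but your closing phrase about ``identifying the evaluation points'' should be made explicit along these lines.
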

The scalar version was  rediscovered and popularized by Berg \phantomsection\cite{berg}, who found it in a paper of Collar \phantomsection\cite{collar}, who attributes it to his teacher, Aitken. As we are inverting a  Hankel block matrix we are dealing with a Hankel Bezoutian type expression. This is connected with the following

\begin{pro}[Christoffel--Darboux formula]\label{pro:CD formula}
	The Christoffel--Darboux kernel satisfies
	\begin{equation*}
	(x-y)K_n(x,y)=(P^{[2]}_{n}(y))^\top (H_n)^{-1}P^{[1]}_{n+1}(x)-(P^{[2]}_{n+1}(y))^\top (H_{n})^{-1}P^{[1]}_{n}(x).
	\end{equation*}
\end{pro}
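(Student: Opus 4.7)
The plan is to exploit the operator-theoretic framework of the Gauss--Borel factorization that the paper has set up. First, I would introduce the block Jacobi matrices
\begin{align*}
J:=S_1\Lambda S_1^{-1},\qquad \tilde J:=S_2\Lambda S_2^{-1},
\end{align*}
so that the spectral property $\Lambda\chi(x)=x\chi(x)$ together with $P^{[i]}=S_i\chi(x)$ gives the matrix recursions $xP^{[1]}(x)=JP^{[1]}(x)$ and $xP^{[2]}(x)=\tilde J P^{[2]}(x)$. A direct computation that combines the block Hankel symmetry $\Lambda M=M\Lambda^\top$ with the factorization $M=S_1^{-1}HS_2^{-\top}$ yields the intertwining relation $JH=H\tilde J^\top$, which is the single algebraic input that drives everything.

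The next step is to establish that $J$ and $\tilde J$ are block tridiagonal with $I_p$ on the superdiagonal. Since $S_1,S_2$ are block lower unitriangular and $\Lambda$ carries $I_p$ on the superdiagonal only, both $J$ and $\tilde J$ are at least block upper Hessenberg, with superdiagonal equal to $I_p$ (which also reflects that the $P^{[i]}_k$ are monic). The intertwining $JH=H\tilde J^\top$ then shows that $JH$ is block lower Hessenberg; since $H$ is block diagonal and invertible, this forces $J$ to be block lower Hessenberg as well, hence block tridiagonal. The same argument applies to $\tilde J$. Reading $JH=H\tilde J^\top$ entry by entry as $J_{kj}H_j=H_k\tilde J_{jk}^\top$ and specializing to $(k,j)=(n,n+1)$ gives the boundary identification $\tilde J_{n+1,n}^\top H_{n+1}^{-1}=H_n^{-1}$, while monicity gives $\tilde J_{n,n+1}^\top H_n^{-1}=H_n^{-1}$.

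With these ingredients the body of the proof is a telescoping manipulation. Substituting the recursions for $xP^{[1]}$ and $yP^{[2]}$ into the definition \eqref{eq:CD kernel} of $K_n$ produces
\begin{align*}
(x-y)K_n(x,y)=\sum_{k=0}^{n}\sum_{j}(P^{[2]}_k(y))^\top H_k^{-1}J_{kj}P^{[1]}_j(x)-\sum_{k=0}^{n}\sum_{j}(P^{[2]}_j(y))^\top \tilde J_{kj}^\top H_k^{-1}P^{[1]}_k(x).
\end{align*}
Using the entry form $H_k^{-1}J_{kj}=\tilde J_{jk}^\top H_j^{-1}$ of the intertwining in the first sum and relabelling $(k,j)\leftrightarrow(j,k)$, both expressions become sums of the same summand $(P^{[2]}_j(y))^\top \tilde J_{kj}^\top H_k^{-1}P^{[1]}_k(x)$, but truncated by $j\le n$ in one case and by $k\le n$ in the other. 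All index pairs with both $k,j\le n$ cancel; block tridiagonality of $\tilde J$ restricts the surviving ones to $|k-j|\le 1$, so only $(k,j)=(n+1,n)$ and $(k,j)=(n,n+1)$ contribute. Inserting the boundary identifications derived above collapses the result to the claimed right-hand side.

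The step I expect to be the main obstacle is the careful bookkeeping of block indices and transposes when translating $JH=H\tilde J^\top$ into identities among individual blocks, and in particular verifying the block tridiagonality of $J$ and $\tilde J$ (the asymmetric roles of the two bi-orthogonal families mean one must keep track of where each transpose lands). Once these structural facts are secured, the combinatorics of the telescoping is routine.
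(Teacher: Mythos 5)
Your proof is correct, but note that the paper itself supplies no proof of Proposition \ref{pro:CD formula}: it is stated as a classical fact (with a pointer to the survey literature), so there is nothing to compare against line by line. Your route is the standard operator-theoretic one, and it is fully consistent with the machinery the paper only deploys later, in \S 5, where $L_1=S_1\Lambda S_1^{-1}$ and $L_2=\tilde S_2\Lambda^\top\tilde S_2^{-1}$ are introduced and the Hankel symmetry is shown to force $L_1=L_2$, i.e.\ a block Jacobi matrix; your intertwining relation $JH=H\tilde J^\top$ is exactly that statement in disguise. All the substantive steps check out: $JH=H\tilde J^\top$ follows from $\Lambda M=M\Lambda^\top$ and the factorization, the blockwise identity $J_{kj}H_j=H_k\tilde J_{jk}^\top$ at $(k,j)=(n,n+1)$ gives $\tilde J_{n+1,n}^\top H_{n+1}^{-1}=H_n^{-1}$, and the telescoping leaves precisely the two boundary terms in the claimed formula (the row-finiteness of $J$ and $\tilde J$ makes all the interchanges of summation legitimate). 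The only blemish is terminological: you call $S_i\Lambda S_i^{-1}$ ``upper Hessenberg,'' whereas a matrix with arbitrary blocks on and below the diagonal and a single superdiagonal is lower Hessenberg --- this is also the convention the paper uses in \S 5 --- so your labels ``upper'' and ``lower'' are systematically swapped; since you apply them consistently, the argument is unaffected.
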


\section{Connection formulas for Darboux transformations of Christoffel type}

Given a monic matrix polynomial   $W(x)$   of degree $N$ we consider a new matrix of measures of the form
\begin{align*}
\d\mu(x)\mapsto \d\hat\mu(x):=W(x)\d\mu(x)
\end{align*}
with the corresponding perturbed sesquilinear form
\begin{align*}
\prodint{P(x),Q(x)}_{W}=\int P(x)W(x)\d\mu(x)(Q(x))^\top.
\end{align*}
In the same way as above, the moment block matrix
\begin{align*}
\hat{M}:=\int \chi(x)W(x)\d\mu(x)(\chi(x))^\top
\end{align*}
is introduced.
Let us assume that the perturbed moment matrix admits a Gaussian factorization
	\begin{align*}
	\hat M=\hat S_1^{-1} \hat H (\hat S_2)^{-\top},
	\end{align*}
	where $\hat S_1,\hat S_2$ are lower unitriangular block matrices and $\hat H$ is a diagonal block matrix
	\begin{align*}
	\hat S_i&=\begin{bmatrix}
	I_p&0&0&\dots\\
	(\hat S_i)_{1,0}& I_p&0&\cdots\\
	(\hat S_i)_{2,0}& (\hat S_i)_{2,1}&I_p&\ddots\\
	&&&\ddots
	\end{bmatrix}, &
	\hat H&=\begin{bmatrix}
	\hat H_0&0&0&\cdots\\
	0&\hat H_1&0&\ddots\\
	0&0& \hat H_2&\ddots\\
	\vdots&\vdots&\ddots&\ddots
	\end{bmatrix}, & i&=1,2.
	\end{align*}
Then, we have the corresponding perturbed bi-orthogonal matrix polynomials
\begin{align*}
\hat P^{[i]}(x)&=\hat S_i\chi(x), & i&=1,2,
\end{align*}
with respect to the perturbed sesquilinear form $\prodint{\cdot,\cdot}_{W}$.

\begin{rem}\label{remark:non_monic}
	The discussion for monic matrix polynomial perturbations and perturbations with a matrix polynomial with non singular leading coefficients are equivalent.
Indeed, if  instead of a monic matrix polynomial we have a matrix polynomial $ \tilde W(x)=A_Nx^N+\dots+A_0$ with a nonsingular leading coefficient,  $\det A_N\neq 0$, then we could factor out  $A_N$, $\tilde W(x)= A_N  W(x)$, where $W$ is monic. The moment matrices are related by $\tilde M= A_N\hat M$ and, moreover, $\tilde S_1= A_N \hat S_1(A_N)^{-1}$, $\tilde H=A_N \hat H$, $\tilde S_2=\hat S_2$, and $\tilde P^{[1]}_k(x)=A_N P^{[1]}_k(x)(A_N)^{-1}$ as well as $\tilde P^{[2]}_k(x)=\hat P^{[2]}_k(x)$.
\end{rem}

\subsection{Connection formulas for bi-orthogonal polynomials}\label{S:connection}

\begin{pro}\label{pro:darboux}
	The moment matrix $M$ and the $W$ perturbed moment matrix $\hat M$ satisfy
	\begin{align*}
	\hat M=W(\Lambda)M.
	\end{align*}
\end{pro}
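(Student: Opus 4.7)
The plan is to reduce everything to the single spectral identity $\Lambda\chi(x)=x\chi(x)$ and the fact that the operator $W(\Lambda)$, being independent of $x$, can be pulled out of the integral.

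First, I would make precise what $W(\Lambda)$ means for a matrix polynomial $W(x)=\sum_{k=0}^{N}A_k x^k$ with $A_k\in\R^{p\times p}$. Since $\Lambda$ is the block shift with $I_p$ on the super-diagonal, $\Lambda^k$ has $I_p$ on its $k$-th block super-diagonal; accordingly $W(\Lambda)$ is the block matrix whose $k$-th super-diagonal is filled with the constant block $A_k$. With this convention, and using $\Lambda\chi(x)=x\chi(x)$ iterated to give $\Lambda^k\chi(x)=x^k\chi(x)$, the $n$-th block of $W(\Lambda)\chi(x)$ equals
\begin{align*}
\sum_{k=0}^{N}A_k\,(\chi(x))_{n+k}=\sum_{k=0}^{N}A_k\,I_p x^{n+k}=W(x)\,x^n=(\chi(x)W(x))_n.
\end{align*}
Thus the key lemma $W(\Lambda)\chi(x)=\chi(x)W(x)$ holds (note that $x^n$ is a scalar, so it commutes freely with $W(x)$ and with $I_p$).

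With that identity in hand, the proof of the proposition is essentially one line. Starting from the definition
\begin{align*}
\hat M=\int \chi(x)\,W(x)\,\d\mu(x)\,(\chi(x))^{\top},
\end{align*}
substitute $\chi(x)W(x)=W(\Lambda)\chi(x)$ inside the integrand. Since $W(\Lambda)$ is a semi-infinite block matrix of constants (independent of $x$), it commutes with the integral sign, so
\begin{align*}
\hat M=W(\Lambda)\int \chi(x)\,\d\mu(x)\,(\chi(x))^{\top}=W(\Lambda)\,M.
\end{align*}

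The only subtlety is bookkeeping: one must check that the formal manipulation of pulling $W(\Lambda)$ out of the integral is legitimate row-by-row (each block entry of $\hat M$ is an ordinary $p\times p$ matrix integral, and $W(\Lambda)$ contributes only finitely many $A_k$'s to any given row), and that the convention for $W(\Lambda)$ is consistent with the matrix-polynomial convention used in the rest of the paper. No analytic issue arises, and no further structural fact about $\mu$ is needed; in particular, the block Hankel symmetry of $M$ plays no role here.
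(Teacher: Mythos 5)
Your proof is correct and is exactly the argument the paper intends: the paper states Proposition \ref{pro:darboux} without proof, relying implicitly on the spectral property $\Lambda\chi(x)=x\chi(x)$ highlighted just before, which yields $W(\Lambda)\chi(x)=\chi(x)W(x)$ and hence $\hat M=W(\Lambda)M$ after pulling the constant block matrix $W(\Lambda)$ out of the integral. Your explicit verification of the key lemma blockwise, and the remark that no Hankel structure of $M$ is needed, are accurate and simply fill in the details the authors omitted.
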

\begin{defi}
	Let us introduce the following semi-infinite matrices
	\begin{align*}
	\omega^{[1]}&:=\hat S_1W(\Lambda)  S_1^{-1}, & \omega^{[2]}&:=(S_2\hat S_2^{-1})^\top,
	\end{align*}
	which we call resolvent or connection matrices.
\end{defi}
\begin{pro}[Connection formulas]\label{pro:connection}
Perturbed and non perturbed bi-orthogonal polynomials are subject to the following  linear connection formulas
		\begin{align}\label{eq:connection}
		\omega^{[1]}P^{[1]}(x)=&\hat P^{[1]}(x)W(x),\\ \label{eq:con_kernel}
	P^{[2]}(x)=&(\omega^{[2]})^\top\hat P^{[2]}(x).
	\end{align}
\end{pro}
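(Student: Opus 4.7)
The plan is to observe that both identities are essentially tautological once one has the right intertwining relation between the block shift $\Lambda$ and the polynomial $W$. The single nontrivial ingredient is the identity
\begin{align*}
W(\Lambda)\chi(x)=\chi(x)W(x),
\end{align*}
which generalises the scalar property $\Lambda\chi(x)=x\chi(x)$ to matrix polynomials. I would prove this first by a block-by-block check: the $i$-th block of $W(\Lambda)\chi(x)$ equals $\sum_{k=0}^{N} A_k\, I_p\, x^{i+k}=x^i W(x)$, which matches the $i$-th block $I_p x^i\, W(x)$ of $\chi(x)W(x)$. (Here $W(\Lambda)$ is the banded block matrix with $A_{j-i}$ in the $(i,j)$ block for $i\le j\le i+N$.)

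Once this intertwining is in place, the first connection formula follows by chaining definitions: plug in $\omega^{[1]}=\hat S_1 W(\Lambda) S_1^{-1}$ and $P^{[1]}(x)=S_1\chi(x)$ to get
\begin{align*}
\omega^{[1]}P^{[1]}(x)=\hat S_1 W(\Lambda) S_1^{-1}S_1\chi(x)=\hat S_1 W(\Lambda)\chi(x)=\hat S_1\chi(x)W(x)=\hat P^{[1]}(x)W(x).
\end{align*}
The second formula is even more direct and does not even require the intertwining: from $\omega^{[2]}=(S_2\hat S_2^{-1})^\top$ and $\hat P^{[2]}(x)=\hat S_2\chi(x)$ we obtain
\begin{align*}
(\omega^{[2]})^\top \hat P^{[2]}(x)=S_2\hat S_2^{-1}\hat S_2\chi(x)=S_2\chi(x)=P^{[2]}(x).
\end{align*}

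There is no genuine obstacle in the proof; what needs to be quietly invoked is simply that the Gauss factorisations of $M$ and $\hat M$ exist (so that $\omega^{[1]}$ and $\omega^{[2]}$ are well defined as semi-infinite matrices), and the block-banded structure of $W(\Lambda)$. Although Proposition \ref{pro:darboux} ($\hat M=W(\Lambda)M$) is not used to derive the displayed identities themselves, combining it with the two factorisations yields the compatibility $\omega^{[1]}H=\hat H\,\omega^{[2]}$, which explains why $\omega^{[1]}$ is upper triangular with a finite upper band and with the identity block on its $N$-th super-diagonal — a structural fact that will be exploited in the spectral developments that follow.
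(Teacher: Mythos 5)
Your proof is correct and is precisely the argument the paper intends: the proposition is stated without proof because it follows immediately from the definitions $\omega^{[1]}=\hat S_1W(\Lambda)S_1^{-1}$, $\omega^{[2]}=(S_2\hat S_2^{-1})^\top$, $P^{[i]}=S_i\chi$, $\hat P^{[i]}=\hat S_i\chi$, together with the intertwining $W(\Lambda)\chi(x)=\chi(x)W(x)$ that generalises the spectral property $\Lambda\chi(x)=x\chi(x)$. Your block-by-block verification of that intertwining and the two chains of substitutions supply exactly the steps the authors leave implicit, so nothing is missing.
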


\begin{pro}\label{pro:relation_omega}
The following relations hold
\begin{align*}
\hat H \omega^{[2]}=\omega^{[1]} H.
\end{align*}
\end{pro}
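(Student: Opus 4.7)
The plan is to obtain $\hat H \omega^{[2]} = \omega^{[1]} H$ directly from the intertwining relation $\hat M = W(\Lambda) M$ of Proposition \ref{pro:darboux}, by substituting the two Gauss--Borel factorizations of $M$ and $\hat M$ and then grouping factors so as to expose the connection matrices.

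First, I would start from $\hat M = W(\Lambda) M$ and insert $M = S_1^{-1} H (S_2)^{-\top}$ together with $\hat M = \hat S_1^{-1} \hat H (\hat S_2)^{-\top}$ to obtain
\begin{align*}
\hat S_1^{-1}\hat H (\hat S_2)^{-\top} = W(\Lambda)\, S_1^{-1} H (S_2)^{-\top}.
\end{align*}
I would then left-multiply by $\hat S_1$ and right-multiply by $(\hat S_2)^\top$ to arrive at
\begin{align*}
\hat H = \bigl(\hat S_1 W(\Lambda) S_1^{-1}\bigr)\, H\, \bigl((S_2)^{-\top} (\hat S_2)^\top\bigr).
\end{align*}

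Second, I would identify each parenthesised block with the corresponding connection matrix. The left block is the definition of $\omega^{[1]}$. For the right one, using the antiautomorphism property $(AB)^\top = B^\top A^\top$ I would rewrite $(S_2)^{-\top}(\hat S_2)^\top = (\hat S_2 S_2^{-1})^\top$, and observe that $\omega^{[2]} = (S_2 \hat S_2^{-1})^\top$ gives $(\omega^{[2]})^{-1} = (\hat S_2 S_2^{-1})^\top$. The displayed identity therefore reads $\hat H = \omega^{[1]} H (\omega^{[2]})^{-1}$, and right-multiplication by $\omega^{[2]}$ yields the claim.

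The main obstacle is purely notational: one must be careful with the order of factors whenever inverses and transposes interact, since the factorization mixes $S_i^{-1}$ on one side and $(S_i)^{-\top}$ on the other, and the definitions of $\omega^{[1]}$ and $\omega^{[2]}$ are asymmetric (no transpose in the first, a transpose in the second). As a structural by-product worth recording, $\omega^{[2]}$ is block upper unitriangular (being the transpose of a product of block lower unitriangular matrices) and $\hat H$ is block diagonal, so the identity $\omega^{[1]} = \hat H \omega^{[2]} H^{-1}$ forces $\omega^{[1]}$ to be block upper triangular, a feature that will later be crucial for reading off the Christoffel formula.
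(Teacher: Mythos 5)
Your proposal is correct and follows essentially the same route as the paper: both start from $\hat M = W(\Lambda)M$ of Proposition \ref{pro:darboux}, insert the two Gauss--Borel factorizations, and regroup factors to expose $\omega^{[1]}$ and $\omega^{[2]}$ (the paper multiplies on the right by $(S_2)^\top$ to land directly on $\hat H\omega^{[2]}=\omega^{[1]}H$, while you multiply by $(\hat S_2)^\top$ and then by $\omega^{[2]}$, a trivially equivalent rearrangement). Your closing observation on the band/triangular structure of $\omega^{[1]}$ is also consistent with what the paper records immediately afterwards in Proposition \ref{pro:omega_form}.
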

\begin{proof}
From Proposition  \ref{pro:darboux} and the $LU$ factorization we get
\begin{align*}
\hat S_1^{-1}\hat H\hat S_2^{-\top}=W(\Lambda)  S_1^{-1} H S_2^{-\top},
\end{align*}
so that
\begin{align*}
\hat H( S_2\hat S_2^{-1})^\top=\hat S_1W(\Lambda)  S_1^{-1} H
\end{align*}
and the result follows.
\end{proof}

From this result we easily get that
\begin{pro}\label{pro:omega_form}
The resolvent matrix	 $\omega$ is a band  upper triangular block matrix with all the block superdiagonals above the $N$-th one equal to zero.
\begin{align*}
\omega^{[1]}=\begin{bmatrix}
\omega^{[1]}_{0,0} &\omega^{[1]}_{0,1} &\omega^{[1]}_{0,2}  &\dots &\omega^{[1]}_{0,N-1} &I_p&0&0&\dots \\
0&\omega^{[1]}_{1,1} &\omega^{[1]}_{1,2} &\dots &\omega^{[1]}_{1,N-1} &\omega^{[1]}_{1,N} &I_p&0&\dots\\
0&0&\omega^{[1]}_{2,2} &\dots &\omega^{[1]}_{2,N-1} &\omega^{[1]}_{2,N} &\omega^{[1]}_{2,N+1} &I_p&\ddots\\
 &\ddots&\ddots &\ddots& &&&\ddots&\ddots
\end{bmatrix}
\end{align*}
with
\begin{align}\label{eq:hatH_H_k}
\omega^{[1]}_{k,k}=\hat H_k (H_k)^{-1}.
\end{align}
\end{pro}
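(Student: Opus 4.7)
The plan is to exploit the two distinct expressions available for $\omega^{[1]}$: the defining one $\omega^{[1]}=\hat S_1 W(\Lambda) S_1^{-1}$, and the alternative $\omega^{[1]}= \hat H\,\omega^{[2]}\, H^{-1}$ that follows from Proposition \ref{pro:relation_omega} after noting that $H$ is invertible block--diagonally. The first expression controls the band structure; the second reads off the diagonal blocks.

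First I would analyze the shape of $W(\Lambda)$. Since $\Lambda$ is the shift, $\Lambda^r$ has $I_p$ on the $r$-th block superdiagonal and zeros elsewhere, so for $W(x)=x^N+A_{N-1}x^{N-1}+\dots+A_0$ (monic) the block entries of $W(\Lambda)$ satisfy $(W(\Lambda))_{i,j}=A_{j-i}$ when $0\le j-i\le N$, equal to $I_p$ when $j-i=N$, and zero otherwise. Now $\hat S_1$ is lower unitriangular, so $(\hat S_1)_{k,i}=0$ for $i>k$ and $(\hat S_1)_{k,k}=I_p$; similarly $S_1^{-1}$ is lower unitriangular. Expanding
\begin{align*}
\omega^{[1]}_{k,l}=\sum_{i,j}(\hat S_1)_{k,i}(W(\Lambda))_{i,j}(S_1^{-1})_{j,l},
\end{align*}
the nonzero contributions require $i\leq k$, $i\leq j\leq i+N$, and $l\leq j$. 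Combining these gives $l\leq j\leq i+N\leq k+N$. Hence $\omega^{[1]}_{k,l}=0$ whenever $l>k+N$, establishing the band structure claimed in the statement.

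Next, for the $N$-th superdiagonal $l=k+N$ the chain of inequalities is saturated, forcing $i=k$, $j=k+N$, so that $\omega^{[1]}_{k,k+N}=I_p\cdot I_p\cdot I_p=I_p$, which matches the declared entries. Finally, for the main diagonal I switch to the second expression: $\omega^{[2]}=(S_2\hat S_2^{-1})^\top$ is upper unitriangular because $S_2\hat S_2^{-1}$ is a product of lower unitriangular matrices, hence its diagonal blocks are $I_p$. Since $\hat H$ and $H^{-1}$ are block diagonal, reading the $(k,k)$ entry of $\omega^{[1]}=\hat H\omega^{[2]}H^{-1}$ yields $\omega^{[1]}_{k,k}=\hat H_k(H_k)^{-1}$, which is equation \eqref{eq:hatH_H_k}.

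No serious obstacle is anticipated; the only subtlety is the bookkeeping of three triangular/banded matrices meeting correctly, and the observation that the identity block on the $N$-th superdiagonal is the combined effect of the leading coefficient of $W$ being $I_p$ together with the unitriangular normalization of the Gauss--Borel factors. The diagonal identification is immediate once Proposition \ref{pro:relation_omega} is invoked and the unitriangular nature of $\omega^{[2]}$ is noted.
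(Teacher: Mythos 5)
Your proof is correct and follows exactly the route the paper intends (the paper leaves this as an immediate consequence of Proposition \ref{pro:relation_omega} together with the definition $\omega^{[1]}=\hat S_1 W(\Lambda)S_1^{-1}$): the defining expression gives the vanishing above the $N$-th superdiagonal and the $I_p$ blocks there, while $\omega^{[1]}=\hat H\omega^{[2]}H^{-1}$ with $\omega^{[2]}$ upper unitriangular gives the upper triangularity and the diagonal blocks $\hat H_k(H_k)^{-1}$. Nothing is missing.
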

\subsection{Connection formulas for the Christoffel--Darboux kernel}
In order to relate the perturbed and non perturbed kernel matrix polynomials let us introduce the following truncation of the connection matrix $\omega$.
\begin{defi}
	We introduce the  lower unitriangular matrix $\omega_{(n,N)}\in\R^{Np\times Np}$ with
	\begin{align*}
	\omega_{(n,N)}:=
	\begin{cases}
	\begin{bmatrix}
	0&\dots&0&0&\dots &0\\
	\vdots & &&\vdots & \\
		0&\dots&0&0&\dots &0\\
	\omega^{[1]}_{0,n+1}& \dots& \omega_{0,N-1}&I_p &\ddots&0\\
	\vdots&&&&\ddots&\vdots\\
	\omega^{[1]}_{n,n+1}&\cdots&&&\omega^{[1]}_{n,n+N-1} &I_p
	\end{bmatrix}, & n< N,\\[4pt]
	\begin{bmatrix}
	I_p&0&\dots &0&0\\
	\omega^{[1]}_{n-N+2,n+1}& I_p &\ddots&0&0\\
	\vdots&\ddots&\ddots&\ddots&\vdots\\
	\vdots&&\ddots&I_p&0\\
	\omega^{[1]}_{n,n+1}&\cdots&&\omega^{[1]}_{n,n+N-1} &I_p
	\end{bmatrix}, & n\geq N,
	\end{cases}
	\end{align*}
	and the  diagonal block matrix
	\begin{align*}
	\hat H_{n,N}=\diag(\hat H_{n-N+1},\dots,\hat H_n).
	\end{align*}
\end{defi}
Then, we can write the important
\begin{teo}\label{theorem:perturbed CD}
	The perturbed and original Christoffel-Darboux kernels are related by the following connection formulas
	\begin{align*}
	K_{n}(x,y)+\Big[(\hat P_{n-N+1}^{[2]}(x))^\top,\dots,(\hat P_{n}^{[2]}(x))^\top\Big]
	(\hat H_{(n,N)})^{-1}\omega_{(n,N)}
	\begin{bmatrix}
	P^{[1]}_{n+1}(x)\\
	\vdots\\
	P^{[1]}_{n+N}(x)
	\end{bmatrix}=\hat K_{n}(x,y)W(x),
	\end{align*}
	by convention $\hat P_j^{[2]}=0$ whenever $j<0$.
\end{teo}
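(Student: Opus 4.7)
The plan is to unfold $K_n(x,y)$ using the two connection formulas \eqref{eq:connection} and \eqref{eq:con_kernel} of Proposition \ref{pro:connection}, together with the compatibility identity $\hat H\omega^{[2]}=\omega^{[1]}H$ of Proposition \ref{pro:relation_omega}, collapse the inner sum to $\hat P^{[1]}_l(x)W(x)$ everywhere except at the top indices, and identify the leftover boundary contribution with the stated matrix product. (I read the $(\hat P_{j}^{[2]}(x))^{\top}$ in the displayed formula as $(\hat P_{j}^{[2]}(y))^{\top}$, consistent with the definition \eqref{eq:CD kernel} of the kernel.)

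First I would rewrite the weight $(P^{[2]}_k(y))^\top H_k^{-1}$ appearing in \eqref{eq:CD kernel} in terms of the hatted objects. Transposing \eqref{eq:con_kernel} block by block gives $(P^{[2]}_k(y))^\top=\sum_{l}(\hat P^{[2]}_l(y))^\top\omega^{[2]}_{l,k}$. Multiplying on the right by $H_k^{-1}$ and using Proposition \ref{pro:relation_omega} in the entry-wise form $\omega^{[2]}_{l,k}H_k^{-1}=\hat H_l^{-1}\omega^{[1]}_{l,k}$ (which follows since $H$ and $\hat H$ are block-diagonal), together with the band structure of $\omega^{[1]}$ from Proposition \ref{pro:omega_form}, yields the key identity
$$(P^{[2]}_k(y))^\top H_k^{-1}=\sum_{l=\max(0,\,k-N)}^{k}(\hat P^{[2]}_l(y))^\top \hat H_l^{-1}\,\omega^{[1]}_{l,k}.$$

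Plugging this into \eqref{eq:CD kernel} and swapping the two summations produces
$$K_n(x,y)=\sum_{l=0}^{n}(\hat P^{[2]}_l(y))^\top \hat H_l^{-1}\sum_{k=l}^{\min(n,\,l+N)}\omega^{[1]}_{l,k}P^{[1]}_k(x).$$
Now I would split according to whether the inner sum reaches the top of the band. For $l\le n-N$ the sum runs over the full range $k=l,\dots,l+N$ and, using the first connection formula \eqref{eq:connection} componentwise together with $\omega^{[1]}_{l,l+N}=I_p$ (Proposition \ref{pro:omega_form}), equals $\hat P^{[1]}_l(x)W(x)$. For $l>n-N$ the sum is truncated at $k=n$, so it equals $\hat P^{[1]}_l(x)W(x)-\sum_{k=n+1}^{l+N}\omega^{[1]}_{l,k}P^{[1]}_k(x)$. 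Collecting both contributions yields
$$K_n(x,y)=\hat K_n(x,y)W(x)-\sum_{l=\max(0,\,n-N+1)}^{n}\sum_{k=n+1}^{l+N}(\hat P^{[2]}_l(y))^\top\hat H_l^{-1}\,\omega^{[1]}_{l,k}P^{[1]}_k(x).$$

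The final step is to recognize the remaining double sum as the asserted matrix product. By the definition of $\omega_{(n,N)}$, its block at row $l\in\{n-N+1,\dots,n\}$ and column $k\in\{n+1,\dots,n+N\}$ coincides with $\omega^{[1]}_{l,k}$ (vanishing whenever $k>l+N$ and equal to $I_p$ on the diagonal $k=l+N$), while $\hat H_{(n,N)}^{-1}$ supplies the corresponding diagonal factors $\hat H_l^{-1}$. The convention $\hat P^{[2]}_j=0$ for $j<0$ absorbs the zero padding that appears in $\omega_{(n,N)}$ in the regime $n<N$, so the same formula handles both cases at once. The only real obstacle is this last bookkeeping: carefully matching the index pairs $(l,k)$ of the double sum with the block positions of $\omega_{(n,N)}$ and checking that the bandwidth constraint $k\le l+N$ is exactly what forces the upper-triangular shape of the truncated connection matrix.
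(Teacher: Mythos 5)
Your proposal is correct and follows essentially the same route as the paper's own proof: it rests on the same three ingredients (the connection formulas of Proposition \ref{pro:connection}, the relation $\hat H\omega^{[2]}=\omega^{[1]}H$ of Proposition \ref{pro:relation_omega}, and the band structure of Proposition \ref{pro:omega_form}), with the only difference being that you track the boundary correction through explicit double sums while the paper phrases the identical computation in terms of $(n+1)$-th truncations of the semi-infinite matrices, the leftover term $V_N(x)$ there being exactly your sum over $k=n+1,\dots,l+N$. Your reading of the misprinted argument, $(\hat P^{[2]}_j(x))^\top$ as $(\hat P^{[2]}_j(y))^\top$, is the consistent one.
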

\begin{proof}
	Consider the truncation
	\begin{align*}
(	\omega^{[2]})_{[n+1]}:=\begin{bmatrix}
	I_p &\dots & 	\omega^{[2]}_{0,N-1} & 	\omega^{[2]}_{0,N}& 0&0&\dots&0\\
	0&I_p &\dots &	\omega^{[2]}_{1,N+1} & 	\omega^{[2]}_{1,N+2}& 0&\dots&0\\
	\vdots&&\ddots&     &&\ddots&\ddots\\
	&&     &&&\\
	0&0& & 0& I_p& \dots& &	\omega^{[2]}_{n-N,n} \\
	\vdots&\vdots&&     &\ddots&\ddots&&\vdots\\
	0&0&&     &&&I_p&	\omega^{[2]}_{n-1,n}\\
	0&0&&     &&&0&I_p
	\end{bmatrix}.
	\end{align*}
	Recalling  \eqref{eq:con_kernel} in the form $(P^{[2]}(y))^\top=(\hat P^{[2]}(y))^\top\omega^{[2]}$ we see that
	$\big((\hat P^{[2]}(y))_{[n+1]}\big)^\top(	\omega^{[2]})_{[n+1]}=\big((P^{[2]}(y))_{[n+1]}\big)^\top$ holds for the $n$-th truncations of  $ P^{[2]}(y))$ and $\hat P^{[2]}(y)$.
	Therefore,
	\begin{align*}
\big((\hat P^{[2]}(y))_{[n+1]}\big)^\top(	\omega^{[2]})_{[n+1]}(H_{[n+1]})^{-1}(P^{[1]}(x))_{[n+1]}=&\big((P^{[2]}(y))_{[n+1]}\big)^\top(H_{[n+1]})^{-1}(P^{[1]}(x))_{[n+1]}\\
	=&K_{n}(x,y).
	\end{align*}

	Now, we consider $(	\omega^{[2]})_{[n+1]}(H_{[n+1]})^{-1}(P^{[1]}(x))_{[n+1]}$ and  recall  Proposition \ref{pro:relation_omega} in the form
	\begin{align*}
	(	\omega^{[2]})_{[n+1]}\big(H_{[n+1]}\big)^{-1}=(\hat H_{[n+1]})^{-1}(	\omega^{[1]})_{[n+1]}
	\end{align*}
	 which leads to
		\begin{align*}
		 (	\omega^{[2]})_{n+1}(H_{n+1})^{-1}(P^{[1]}(x))_{n+1}=
	(\hat H_{[n+1]})^{-1}(	\omega^{[1]})_{[n+1]}(P^{[1]}(x))_{[n+1]}.
	\end{align*}
	Observe also that
	\begin{align*}
(	\omega^{[1]})_{[n+1]}(P^{[1]}(x))_{[n+1]}=(\omega^{[1]} P^{[1]}(x))_{[n+1]}-\begin{bmatrix}
	0_{(n-N)p\times p}\\ V_N(x)
	\end{bmatrix}
	\end{align*}
	with
	\begin{align*}
	V_N(x)=
	\omega_{(n,N)}
	\begin{bmatrix}
	P^{[1]}_{n+1}(x)\\
	\vdots\\
	P^{[1]}_{n+N}(x)
	\end{bmatrix}.
	\end{align*}
	Hence, recalling \eqref{eq:connection} we get
	\begin{align*}
(	\omega^{[1]})_{[n+1]}(P^{[1]}(x))_{[n+1]}=(\hat P^{[1]}(x))_{[n+1]}W(x)-\begin{bmatrix}
	0_{(n-N)p\times p}\\ V_N(x)
	\end{bmatrix},
	\end{align*}
	and consequently
	\begin{multline*}
	\big((\hat P^{[2]}(y))_{[n+1]}\big)^\top
	(\omega^{[2]})_{[n+1]}(H_{[n+1]})^{-1}(P^{[1]}(x))_{[n+1]}\\
		\begin{aligned}
		&=\big((\hat P^{[2]}(y))_{[n+1]}\big)^\top(\hat H_{[n+1]})^{-1}(\hat P^{[1]}(x))_{[n+1]}W(x)-\big((\hat P^{[2]}(y))_{[n+1]}\big)^\top(\hat H_{[n+1]})^{-1}\begin{bmatrix}
	0_{(n-N)p\times p}\\ V_N(x)
	\end{bmatrix}\\&
		=\hat K_{n+1}(x,y)W(y)-\big((\hat P^{[2]}(y))_{[n+1]}\big)^\top(\hat H_{[n+1]})^{-1}\begin{bmatrix}
		0_{(n-N)p\times p}\\ V_N(x)
		\end{bmatrix}.
	\end{aligned}
	\end{multline*}
\end{proof}
\section{Monic matrix polynomial perturbations}

In this section we study the case of  perturbations  by monic matrix polynomials $W(x)$, which is equivalent to matrix polynomials with nonsingular leading coefficients. Using the theory given in \S \ref{S:matrix polynomials} we are able to extend the celebrated Christoffel formula to this context.

\subsection{The Christoffel formula for matrix bi-orthogonal polynomials}

We are now ready to show how the perturbed set of matrix bi-ortogonal  polynomials $\{\hat P_n^{[1]}(x),\hat P_n^{[2]}(x),\}_{n=0}^\infty$ is related to the original set  $\{ P_n^{[1]}(x), P_n^{[2]}(x)\}_{n=0}^\infty$
\begin{pro}
	Let $v_{j}^{(i)}(x)$ be the adapted root polynomials of the monic matrix polynomial $W(x)$ given in \eqref{vecmil}. Then, for each eigenvalue $x_i\in\sigma(W)$, $i\in\{1,\dots,q\}$,
		\begin{align}\label{eq:con2}
		\omega^{[1]}_{k,k} \frac{\d^r(P_k^{[1]}v_{j}^{(i)})}{\d x^r}\bigg|_{x=x_i} +\dots+\omega^{[1]}_{k,k+N-1}\frac{\d^r(P_{k+N-1}^{[1]}v_{j}^{(i)})}{\d x^r}	\bigg|_{x=x_i} =-\frac{\d^r(P^{[1]}_{k+N}v_{j}^{(i)})}{\d x^r}\bigg|_{x=x_i} ,
		\end{align}
		for  $r=0,\dots,\kappa^{(i)}_j-1,$ and $j=1\dots,s_i$.
\end{pro}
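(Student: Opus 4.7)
The plan is to read off this identity from the connection formula \eqref{eq:connection} combined with the band structure of $\omega^{[1]}$ established in Proposition \ref{pro:omega_form} and the defining property of adapted root polynomials in Proposition \ref{pro:adapted_root}.

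First, I would write out the $k$-th block row of the matrix identity $\omega^{[1]} P^{[1]}(x) = \hat P^{[1]}(x) W(x)$. By Proposition \ref{pro:omega_form}, $\omega^{[1]}$ is upper triangular and banded, with all block superdiagonals beyond the $N$-th vanishing, and with $I_p$ sitting exactly on the $N$-th superdiagonal. Consequently the $k$-th row reads
\begin{align*}
\omega^{[1]}_{k,k} P^{[1]}_k(x) + \omega^{[1]}_{k,k+1} P^{[1]}_{k+1}(x) + \cdots + \omega^{[1]}_{k,k+N-1} P^{[1]}_{k+N-1}(x) + P^{[1]}_{k+N}(x) = \hat P^{[1]}_k(x)\, W(x).
\end{align*}

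Next, I would right-multiply both sides by the adapted root polynomial $v_j^{(i)}(x)$ of Definition \ref{def:adapted}. The right-hand side then becomes $\hat P^{[1]}_k(x)\, W(x) v_j^{(i)}(x)$. By Proposition \ref{pro:adapted_root}, we have
\begin{align*}
\frac{\d^s}{\d x^s}\bigg|_{x=x_i} \bigl(W(x) v_j^{(i)}(x)\bigr) = 0, \qquad s = 0,\dots,\kappa_j^{(i)}-1.
\end{align*}
Applying the Leibniz rule to the product $\hat P_k^{[1]}(x)\cdot \bigl(W(x) v_j^{(i)}(x)\bigr)$, every term in the expansion of $\frac{\d^r}{\d x^r}\bigl(\hat P_k^{[1]}(x) W(x) v_j^{(i)}(x)\bigr)$ at $x=x_i$ contains a factor $\frac{\d^s}{\d x^s}|_{x=x_i}(W v_j^{(i)})$ with $s\le r \le \kappa_j^{(i)}-1$, and therefore vanishes. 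Hence the $r$-th derivative at $x=x_i$ of the right-hand side is zero.

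Taking $r$-th derivatives at $x_i$ on the left-hand side and moving the $P^{[1]}_{k+N}v_j^{(i)}$ term to the right yields exactly \eqref{eq:con2}. There is no real obstacle here beyond keeping track of the Leibniz expansion; the whole argument is simply the observation that \eqref{eq:connection} is a matrix polynomial identity whose right-hand side, evaluated on an adapted root polynomial, has a zero of order at least $\kappa_j^{(i)}$ at $x_i$.
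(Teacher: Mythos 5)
Your proposal is correct and follows essentially the same route as the paper: extract the $k$-th block row of the connection formula \eqref{eq:connection} using the band structure of $\omega^{[1]}$, multiply on the right by the adapted root polynomial, and kill the right-hand side via the Leibniz rule together with Proposition \ref{pro:adapted_root}. This is exactly the argument given in the paper.
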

\begin{proof}
From \eqref{eq:connection} we get
	\begin{align*}
	\omega^{[1]}_{k,k} P_k^{[1]}(x)+\dots+\omega^{[1]}_{k,k+N-1}P_{k+N-1}^{[1]}(x)+P_{k+N}(x)=\hat P^{[1]}_k(x)W(x).
	\end{align*}
Now, according to Proposition \ref{pro:adapted_root} we have
\begin{align}\label{eq:hatP-chains-0}
\begin{aligned}
\frac{\d^r}{\d x^r}	\bigg|_{x=x_i}
\big(\hat P^{[1]}_kWv_{j}^{(i)} \big)
&=\sum_{s=0}^r\binom{r}{s}\frac{\d^{r-s}\hat P^{[1]}_k}{\d x^{r-s}}\bigg|_{x=x_i}
\frac{\d^s(W v_{j}^{(i)})}{\d x^s}	\bigg|_{x=x_i} \\&=0,
\end{aligned}
\end{align}
for  $r=0,\dots,\kappa^{(i)}_j-1$ and $j=1\dots,s_i$.
\end{proof}

Recall that $\sum_{j=1}^{s_i}\kappa_j^{(i)}=\alpha_i$ and that the sum of all multiplicities $\alpha_i$ is $Np=\sum_{i=1}^q\alpha_i$, $q=\# \sigma(W)$.
\begin{defi}
\begin{enumerate}
	\item 	For each eigenvalue $x_i\in\sigma(W)$,  in terms of the adapted root polynomials $v_{j}^{(i)}(x)$  of the monic matrix polynomial $W(x)$ given in \eqref{vecmil}, we introduce  the vectors
\begin{align*}
\pi_{j,k}^{(r),(i)}:= \frac{\d^r(P_{k}^{[1]}v_{j}^{(i)})}{\d x^r}\bigg|_{x=x_i}\in\C^p
\end{align*}
and  arrange them in the  partial row matrices $\pi_k^{(i)}\in \C^{p\times \alpha_i p}$ given by
\begin{align*}
\pi_k^{(i)}=\Big[\pi^{(0),(i)}_{1,k},\dots, \pi^{(\kappa^{(i)}_1-1),(i)}_{1,k},\dots,\pi^{(0),(i)}_{s_i,k},\dots,\pi^{(\kappa_{s_i}^{(i)}-1),(i)}_{s_i,k}\Big].
\end{align*}
We collect all them as
\begin{align*}
\pi_k:=\big[\pi_k^{(1)},\dots, \pi_k^{(q)}\big]\in \C^{p\times N p}.
\end{align*}
Finally, we have
\begin{align*}
\Pi_{k,N}:=\begin{bmatrix} \pi_k\\
\vdots\\
\pi_{k+N-1}
\end{bmatrix}\in \C^{Np\times Np}.
\end{align*}
\item 	In a similar manner, we  define
	\begin{align*}
	\gamma_{j,n}^{(r),(i)}(y)&:= \frac{\d^r\big(K_{n}(x,y)v_{j}^{(i)}(x)\big)}{\d x^r}\bigg|_{x=x_i}\in\C^p[y],\\
	\gamma_n^{(i)}(y)&:=\big[\gamma^{(0),(i)}_{1,n}(y),\dots, \gamma^{(\kappa^{(i)}_1-1),(i)}_{1,n}(y),\dots,\gamma^{(0),(i)}_{s_i,n}(y),\dots,\gamma^{(\kappa_{s_i}^{(i)}-1),(i)}_{s_i,n}(y)\big]\in \C^{p\times \alpha_i }[y],
	\end{align*}
	and, as above,  collect all of them in
	\begin{align*}
	\gamma_n(y)=\big[\gamma_n^{(1)},\dots, \gamma_n^{(q)}\big]\in \C^{p\times N p}[y].
	\end{align*}
\end{enumerate}
\end{defi}

\begin{teo}[The Christoffel formula for matrix bi-orthogonal polynomials]\label{theo:spectral}
	The perturbed set of matrix bi-orthogonal polynomials $\{\hat P_k^{[1]}(x), \hat P_k^{[2]}(x)\}_{k=0}^\infty$, whenever $\det \Pi_{k,N}\neq 0$, can be written as the following last quasideterminant
	\begin{align}\label{eq:Christoffel1}
	\hat P_k^{[1]}(x)W(x)&=\Theta_*\left[\begin{array}{c|c}
	\Pi_{k,N} &\begin{matrix}
	P^{[1]}_k(x)\\ \vdots\\ P^{[1]}_{k+N-1}(x)	\end{matrix}\\  \hline
	\pi_{k+N}&P^{[1]}_{k+N}(x)
	\end{array}\right], \\\label{eq:Christoffel2}
		(\hat P_{k}^{[2]}(x))^\top(\hat H_k)^{-1}&=\Theta_*\left[\begin{array}{c|c}
	\Pi_{k+1,N} & \begin{matrix}
	0\\
	\vdots\\
	0\\
	I_p
	\end{matrix}\\\hline
	\gamma_k(x) &0
	\end{array}\right].	\end{align}
	Moreover, the new matrix squared norms are
	\begin{align}\label{eq:hat_H_k}
	\hat H_k&=\Theta_*\left[\begin{array}{c|c}
	\Pi_{k,N} &\begin{matrix}
	H_k\\ 0\\ \vdots\\ 0 \end{matrix}\\  \hline
	\pi_{k+N}&0
	\end{array}\right].
	\end{align}
\end{teo}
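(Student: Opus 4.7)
The plan is to reduce each of the three formulas to a block linear system for the resolvent blocks of \(\omega^{[1]}\) (or for the perturbed dual polynomial), and then to read the unique solution as a last quasi-determinant. The heavy lifting---encoding the action of \(W(x)\) via derivatives at its eigenvalues---has already been done in the preceding propositions; what remains is block algebra.

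For \eqref{eq:Christoffel1}, I would expand \eqref{eq:connection} on the \(k\)-th block row, using Proposition \ref{pro:omega_form} to fix the band and to identify the \(N\)-th superdiagonal block as \(I_p\), which yields
\[
\omega^{[1]}_{k,k}P_k^{[1]}(x)+\dots+\omega^{[1]}_{k,k+N-1}P_{k+N-1}^{[1]}(x)+P_{k+N}^{[1]}(x)=\hat P_k^{[1]}(x)W(x).
\]
The identities \eqref{eq:con2}, written in terms of the vectors \(\pi_{j,k}^{(r),(i)}\) and collected over all eigenvalues, admissible orders, and adapted root polynomials, are precisely the single matrix equation
\[
\bigl[\omega^{[1]}_{k,k},\dots,\omega^{[1]}_{k,k+N-1}\bigr]\Pi_{k,N}=-\pi_{k+N},
\]
a square system of \(Np^2\) scalar equations in \(Np^2\) scalar unknowns. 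Under the hypothesis \(\det\Pi_{k,N}\neq 0\) the unique solution is \(-\pi_{k+N}\Pi_{k,N}^{-1}\); substituting back is exactly the last quasi-determinant \eqref{eq:Christoffel1}. Formula \eqref{eq:hat_H_k} then drops out: by \eqref{eq:hatH_H_k} one has \(\hat H_k=\omega^{[1]}_{k,k}H_k\), and \(\omega^{[1]}_{k,k}\) is the first \(p\times p\) block of \(-\pi_{k+N}\Pi_{k,N}^{-1}\), so placing \(H_k\) at the top of an otherwise zero block column reproduces the stated quasi-determinant.

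For \eqref{eq:Christoffel2} I would invoke the CD-kernel connection of Theorem \ref{theorem:perturbed CD}. Applying to it the operator \(\frac{\d^r}{\d x^r}\big|_{x=x_i}(\cdot)\,v_j^{(i)}(x)\) for each eigenvalue and each admissible \((j,r)\), the term \(\hat K_n(x,y)W(x)\) is annihilated by the Leibniz expansion together with Proposition \ref{pro:adapted_root}. Collecting what survives in the \(\pi\) and \(\gamma\) notation gives the identity
\[
\gamma_n(y)=-\bigl[(\hat P_{n-N+1}^{[2]}(y))^{\top},\dots,(\hat P_n^{[2]}(y))^{\top}\bigr]\hat H_{(n,N)}^{-1}\omega_{(n,N)}\Pi_{n+1,N}.
\]
Right-multiplying by \(\Pi_{n+1,N}^{-1}\) and then by the last block column \(E_N=[0,\dots,0,I_p]^{\top}\), and noting that the last block column of the lower unitriangular \(\omega_{(n,N)}\) is precisely \(E_N\) while \(\hat H_{(n,N)}\) is block diagonal with \(\hat H_n\) in its last slot, the right-hand side collapses to \(-(\hat P_n^{[2]}(y))^{\top}\hat H_n^{-1}\). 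Recognising the left-hand side as a last quasi-determinant produces \eqref{eq:Christoffel2}.

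The main obstacle, I expect, is the bookkeeping for \eqref{eq:Christoffel2}: one must correctly identify the shift from \(\Pi_{n,N}\) to \(\Pi_{n+1,N}\), exploit the precise lower unitriangular shape of \(\omega_{(n,N)}\) in both regimes \(n<N\) and \(n\geq N\), and verify that after right multiplication by \(E_N\) no lower-order \((\hat P_j^{[2]})^{\top}\hat H_j^{-1}\) for \(j<n\) survives. By comparison, \eqref{eq:Christoffel1} and \eqref{eq:hat_H_k} follow at once from a single invertible block system.
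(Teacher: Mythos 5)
Your proposal is correct and follows essentially the same route as the paper: equation \eqref{eq:con2} is solved as the block system $[\omega^{[1]}_{k,k},\dots,\omega^{[1]}_{k,k+N-1}]\Pi_{k,N}=-\pi_{k+N}$ and substituted into the $k$-th row of \eqref{eq:connection} to get \eqref{eq:Christoffel1}, with \eqref{eq:hat_H_k} read off from $\hat H_k=\omega^{[1]}_{k,k}H_k$, while \eqref{eq:Christoffel2} comes from differentiating the kernel connection formula of Theorem \ref{theorem:perturbed CD} against the adapted root polynomials and inverting $\Pi_{k+1,N}$. Your explicit justification of the final collapse onto the last block column (via the unitriangularity of $\omega_{(n,N)}$ and the block-diagonal structure of $\hat H_{(n,N)}$) is the step the paper leaves as ``in particular,'' and is handled correctly.
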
	
\begin{proof}
	We assume that  $ P_j^{[2]}=0$ whenever $j<0$.
To prove \eqref{eq:Christoffel1} notice that from \eqref{eq:con2} one deduces for the rows of the connection matrix that
	\begin{align}\label{eq:the_clue}
	[\omega^{[1]}_{k,k},\dots,\omega^{[1]}_{k,k+n-1}]=-\pi_{k+N}(\Pi_{k,N})^{-1}.
	\end{align}
	Now, using \eqref{eq:connection} we get
	\begin{align*}
[\omega^{[1]}_{k,k},\dots,\omega^{[1]}_{k,k+N-1}]\begin{bmatrix}P^{[1]}_k(x)\\\vdots\\P^{[1]}_{k+N-1}(x)
\end{bmatrix}+P^{[1]}_{k+N}(x)=	\hat P^{[1]}_k(x)W(x).
	\end{align*}
and \eqref{eq:Christoffel1} follows immediately.

To deduce \eqref{eq:Christoffel2} for $k\geq N$ notice that Theorem \ref{theorem:perturbed CD} together with \eqref{eq:hatP-chains-0} yields
\begin{align*}
\gamma^{(r),(i)}_{j,k}(x)+
\Big[(\hat P_{k-N+1}^{[2]}(x))^\top,\dots,(\hat P_{k}^{[2]}(x))^\top\Big]
(\hat H_{(k,N)})^{-1}\omega_{(k,N)}
\begin{bmatrix}
\pi^{(r),(i)}_{j,k+1}\\
\vdots\\
\pi^{(r),(i)}_{j,k+N}
\end{bmatrix}=0
\end{align*}
for  $r=0,\dots,\kappa^{(i)}_j-1$ and $j=1\dots,s_i$. We arrange these equations in a matrix form to get
\begin{align*}
\gamma_k(x)+\Big[(\hat P_{k-N+1}^{[2]}(x))^\top,\dots,(\hat P_{k}^{[2]}(x))^\top\Big](\hat H_{(k,N)})^{-1}\omega_{(k,N)}\Pi_{k+1,N}=0.
\end{align*}
Therefore, assuming that $\det\Pi_{k+1,N}\neq 0$, we get
\begin{align*}
\Big[(\hat P_{k-N+1}^{[2]}(x))^\top,\dots,(\hat P_{k}^{[2]}(x))^\top\Big](\hat H_{(k,N)})^{-1}\omega_{(k,N)}=-\gamma_k(x)(\Pi_{k+1,N})^{-1},
\end{align*}
which, in particular, gives
\begin{align*}
(\hat P_{k}^{[2]}(x))^\top(\hat H_k)^{-1}=-\gamma_k(x)(\Pi_{k+1,N})^{-1}
\begin{bmatrix}
0\\
\vdots\\
0\\
I_p
\end{bmatrix}.
\end{align*}
Finally, \eqref{eq:hat_H_k} is a consequence of \eqref{eq:hatH_H_k} and \eqref{eq:the_clue}.
\end{proof}

\subsection{Degree one monic matrix polynomial perturbations}
Let us illustrate the situation with the most simple case of a  perturbation of degree one monic polynomial matrix
\begin{align*}
W(x)=I_px-A.
\end{align*}
The spectrum $\sigma (I_px-A)=\sigma(A)=\{x_1,\dots,x_q\}$  is determined by the zeroes of the characteristic polynomial of $A$
	\begin{align*}
	\det(I_px-A)=(x-x_1)^{\alpha_1}\cdots(x-x_q)^{\alpha_q},
	\end{align*}
	and for each eigenvalue let  $s_i=\dim\operatorname{Ker} (I_px_i-A)$ be the corresponding geometric multiplicity, and $\kappa^{(i)}_j$, $j=1,\dots,s_i$, its partial multiplicities, so that $\alpha_i=\sum_{j=1}^{s_i}\kappa_j^{(i)}$ is the algebraic multiplicity (the order of the eigenvalue as a zero of the characteristic polynomial of $A$).
	After a similarity transformation of $A$ we will get its canonical Jordan form. With no lack of generality we assume that $A$ is already given in  Jordan canonical form
	\begin{align*}
	\left[\begin{array}{ccccccc}
	\tikzmark{left}\mathcal J_{\kappa^{(1)}_1}(x_1)& & 0& & & &\\
	&\ddots & & & &0 &\\
	0& & \mathcal J_{\kappa^{(1)}_{s_1}}(x_1)\tikzmark{right}\DrawBox[thick]& & & &\\
	& & &\ddots & & &\\
	& & & & \tikzmark{left}	\mathcal J_{\kappa^{(q)}_1}(x_q)& &0\\
	& 0 & & & & \ddots&\\
	& & & &0 &  &\mathcal J_{\kappa^{(q)}_{s_q}}(x_q)\tikzmark{right}\DrawBox[thick]
	\end{array}\right],
	\end{align*}
	where the Jordan blocks corresponding to each eigenvalue are given by
	\begin{align*}
	\mathcal J_{\kappa_j^{(i)}}(x_i)&:=
	\begin{bmatrix}
	x_i&        1&  0         &           &    \\
	0&x_i&    1      &           &     \\
	&         &\ddots     &\ddots     &    \\
	&         &           &x_i &  1\\
	&         &           &          0 &x_i
	\end{bmatrix}\in\R^{\kappa_j^{(i)}\times \kappa_j^{(i)}}, & j&=1,\dots, s_i.
	\end{align*}
	For each eigenvalue $x_i$ we pick  a basis  $\big\{v^{(i)}_{0,j}\big\}_{j=1}^{s_i}$ of $\operatorname{Ker}(I_px_i-A)$, then look
	for vectors $\{v_{r,j}^{(i)}\}_{r=1}^{\kappa_j^{(i)}-1}$	such that
	\begin{align*}
	(I_px_i-A)v_{r,j}^{(i)}=&-v_{r-1,j}^{(i)}, & r&=1,\dots,\kappa_j^{(i)}-1,
	\end{align*}
	so that $\{v^{(i)}_{r,j}\}_{r=0}^{\kappa^{(i)}_j}$ is a Jordan chain.
	As we are dealing with $A$ in its canonical form the vectors $v^{(i)}_{r,j}$ can be identified with those of the canonical basis
	$\{e_i\}_{i=1}^p$ of $\R^p$ with $e_i=(0,\cdots 1_i,\cdots 0)_{p}^\top $. Indeed, we have
	\begin{align*}
	v^{(i)}_{r,j}=e_{\alpha_1+\dots+\alpha_{i-1}+ \kappa^{(i)}_1+\dots +\kappa^{(i)}_{j-1}+r+1}.
	\end{align*}
	Then, consider the polynomial vectors
	\begin{align*}
	v^{(i)}_j(x)&=\sum_{r=0}^{\kappa^{(i)}_j-1}v^{(i)}_{r,j}(x-x_i)^r, & j&=1,\dots,s_i,& i&=1\dots,q,
	\end{align*}
	which satisfy
	\begin{align*}
	\frac{\d^r}{\d x^r}\bigg|_{x=x_i}\Big((I_px-A)v^{(i)}_{j}\Big)&=0, & r&=0,\dots,\kappa^{(i)}_j-1, & j&=1,\dots,s_i,& i&=1,\dots,q.
	\end{align*}	
	Consequently, 	
	\begin{align}\label{eq:jordan_normal}
	\frac{\d^r}{\d x^r}\bigg|_{x=x_i}\Big(\hat P^{[1]}(x)(I_px-A)v^{(i)}_{j}\Big)&=0, & r&=0,\dots,\kappa^{(i)}_j-1, & j&=1,\dots,s_i,& i&=1,\dots,q.
	\end{align}

Now, let us notice the following simple fact
\begin{pro}
	For a given matrix $A\in\R^{p\times p}$ any matrix polynomial $P(x)=\sum_{k=0}^nP_k x^k\in\R^{p\times p}[x]$, $\deg P=n$,  can be written as
	\begin{align*}
	P&=\sum_{k=0}^nP_{k}^{(A)} (I_px-A)^k.
	\end{align*}
	In particular, we have $P_{0}^{(A)}=P(A):=\sum_{k=0}^nP_k A^k$.
\end{pro}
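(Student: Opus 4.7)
The plan is to establish existence of the decomposition by induction on $n=\deg P$, and then to identify $P_0^{(A)}$ with $P(A)$ via a right-evaluation argument. The one fact I will use repeatedly is that, since the scalar indeterminate $x$ commutes with the matrix $A$, the ordinary binomial theorem yields
\begin{equation*}
(I_p x-A)^k=\sum_{j=0}^k\binom{k}{j}(-A)^j\,x^{k-j},
\end{equation*}
so $(I_p x-A)^k$ is a matrix polynomial of degree exactly $k$, with leading coefficient $I_p$ and \emph{all} of its coefficients lying in the commutative subring $\R[A]\subseteq\R^{p\times p}$.

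\textbf{Existence by induction on $n$.} For $n=0$ set $P_0^{(A)}:=P_0$. For $n\geq 1$, the preceding observation implies that the leading coefficient of $P_n(I_p x-A)^n$ equals $P_n$, so $P(x)-P_n(I_p x-A)^n$ is a matrix polynomial of degree at most $n-1$. The induction hypothesis then supplies $P_{n-1}^{(A)},\dots,P_0^{(A)}$, and setting $P_n^{(A)}:=P_n$ completes the decomposition.

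\textbf{Identification $P_0^{(A)}=P(A)$.} Introduce the right-evaluation $\R$-linear map $\phi\colon\R^{p\times p}[x]\to\R^{p\times p}$ defined by $\phi\bigl(\sum_k M_k x^k\bigr):=\sum_k M_k A^k$. The compatibility property I need is that, for every $M\in\R^{p\times p}$ and every polynomial $Q(x)=\sum_j Q_j x^j$ whose coefficients $Q_j$ commute with $A$, one has $\phi\bigl(M\,Q(x)\bigr)=\sum_j M Q_j A^j=M\bigl(\sum_j Q_j A^j\bigr)=M\,Q(A)$. Since the coefficients of each $(I_p x-A)^k$ lie in $\R[A]$ and hence commute with $A$, and since $(I_p A-A)^k=0$ for $k\geq 1$, applying $\phi$ to the decomposition $P(x)=\sum_k P_k^{(A)}(I_p x-A)^k$ immediately gives $P(A)=\phi(P(x))=\sum_k P_k^{(A)}(I_p A-A)^k=P_0^{(A)}$.

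\textbf{Anticipated difficulty.} The proof is essentially routine; the only subtle point is noncommutativity. The map $\phi$ is \emph{not} a ring homomorphism, so one cannot naively substitute $x=A$ on the right-hand side of the decomposition. The decomposition is arranged precisely so that the right factors $(I_p x-A)^k$ have matrix coefficients commuting with $A$, which is exactly the condition that makes $\phi$ compatible with the left multiplication by the constants $P_k^{(A)}$.
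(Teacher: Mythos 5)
Your proof is correct. Note that the paper itself offers no argument here: the proposition is introduced with ``let us notice the following simple fact'' and left unproved, so there is nothing to compare against. Your two steps supply exactly the missing justification: the induction on degree works because each $(I_p x-A)^k$ is monic of degree $k$, and the identification $P_0^{(A)}=P(A)$ is handled by the right-evaluation map $\phi$, whose compatibility with left multiplication by constants (together with $\phi\bigl((I_px-A)^k\bigr)=(A-A)^k=0$ for $k\geq 1$, valid since $A$ commutes with itself in the binomial expansion) is precisely the point that noncommutativity could otherwise obscure. One cosmetic remark: the identity $\phi\bigl(M\,Q(x)\bigr)=M\,\phi\bigl(Q(x)\bigr)$ holds for \emph{any} $Q$, since the coefficient of $x^j$ in $M\,Q(x)$ is $MQ_j$ regardless of commutation; the hypothesis that the $Q_j$ commute with $A$ is only genuinely needed to evaluate $\phi\bigl((I_px-A)^k\bigr)$ as the $k$-th power of $A-A$, so you could streamline that paragraph slightly.
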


      \begin{pro}\label{pro:P(A)}
      	We can write
      	\begin{align*}
      \frac{1}{r!}	\frac{\d^r	(P v^{(i)}_{j} )}{\d x^r}\bigg|_{x=x_i}
           	&=P(A)v^{(i)}_{r,j}, &	r&=0,\dots,\kappa^{(i)}_j-1,  &j&=1,\dots,s_i, & i&=1,\dots,q.
      	\end{align*}
      \end{pro}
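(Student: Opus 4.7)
The plan is to expand both sides in the Jordan chain basis $\{v_{s,j}^{(i)}\}_{s=0}^{\kappa_j^{(i)}-1}$ and check that the coefficients coincide. The starting point is the Jordan chain relation stated before \eqref{eq:jordan_normal}, which rewrites as $A v_{r,j}^{(i)} = x_i v_{r,j}^{(i)} + v_{r-1,j}^{(i)}$ with the convention $v_{-1,j}^{(i)} := 0$. Thus, on the span of the chain, $A$ acts as $x_i I_p + N$, where $N$ is the nilpotent shift operator $N v_{r,j}^{(i)} = v_{r-1,j}^{(i)}$.

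First, since $x_i I_p$ and $N$ commute, the binomial theorem yields
\begin{align*}
A^k v_{r,j}^{(i)} = (x_i I_p + N)^k v_{r,j}^{(i)} = \sum_{s=0}^{r} \binom{k}{s} x_i^{k-s}\, v_{r-s,j}^{(i)},
\end{align*}
the sum truncating at $s=r$ since $N^{s} v_{r,j}^{(i)} = 0$ for $s > r$. Writing $P(x) = \sum_k P_k x^k$, summing over $k$, and using the elementary identity $\frac{1}{s!}P^{(s)}(x_i) = \sum_k P_k \binom{k}{s} x_i^{k-s}$ produces
\begin{align*}
P(A)\, v_{r,j}^{(i)} = \sum_{s=0}^{r} \frac{P^{(s)}(x_i)}{s!}\, v_{r-s,j}^{(i)}.
\end{align*}

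On the other side, the adapted root polynomial $v_j^{(i)}(x) = \sum_{s=0}^{\kappa_j^{(i)}-1} v_{s,j}^{(i)} (x-x_i)^s$ has Taylor coefficients $\frac{1}{s!}\frac{\d^s v_j^{(i)}}{\d x^s}\big|_{x=x_i} = v_{s,j}^{(i)}$. Applying Leibniz's rule and then reindexing $s \mapsto r-s$ gives
\begin{align*}
\frac{1}{r!}\frac{\d^r(P v_j^{(i)})}{\d x^r}\bigg|_{x=x_i} = \sum_{s=0}^{r} \frac{P^{(r-s)}(x_i)}{(r-s)!}\, v_{s,j}^{(i)} = \sum_{s=0}^{r} \frac{P^{(s)}(x_i)}{s!}\, v_{r-s,j}^{(i)},
\end{align*}
which matches the expression for $P(A) v_{r,j}^{(i)}$ obtained above.

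The main obstacle, if any, is purely bookkeeping: one must keep track of the truncation of the binomial sum at $s=r$ and verify that Taylor coefficients of $v_j^{(i)}$ beyond order $r$ contribute nothing to the $r$-th derivative at $x=x_i$. Conceptually the identity is nothing more than the statement that $P(A)$, restricted to the invariant subspace spanned by a Jordan chain, is determined by the $r$-jet of $P$ at the corresponding eigenvalue, and this is precisely what the Taylor expansion of $P v_j^{(i)}$ at $x_i$ encodes.
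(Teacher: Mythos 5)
Your proof is correct, but it takes a different route from the paper's. The paper first establishes (in the preceding proposition) that any matrix polynomial can be re-expanded in powers of the matrix pencil, $P=\sum_k P^{(A)}_k(I_px-A)^k$ with $P^{(A)}_0=P(A)$; it then differentiates that expansion, uses the chain relation $(I_px_i-A)v^{(i)}_{r,j}=-v^{(i)}_{r-1,j}$ to push the factors $(I_px_i-A)^{l-s}$ down the chain, applies Leibniz, and finally collapses the resulting double sum via the identity $\sum_{s=0}^{r-m}(-1)^s\binom{r-m}{s}=\delta_{m,r}$, so that only the $P^{(A)}_0=P(A)$ term survives. You instead keep the ordinary expansion $P=\sum_kP_kx^k$ and expand $A$ (restricted to the chain's invariant span) as $x_iI_p+N$ with $N$ the nilpotent shift, obtaining the classical functional-calculus formula $P(A)v^{(i)}_{r,j}=\sum_{s=0}^{r}\frac{P^{(s)}(x_i)}{s!}v^{(i)}_{r-s,j}$, and then verify by Leibniz that the $r$-jet of $Pv^{(i)}_j$ at $x_i$ produces the same sum. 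Your version is somewhat more transparent — it isolates the standard ``value of a polynomial on a Jordan block'' formula and reduces the proposition to matching two explicit expansions, with no alternating binomial sum to collapse — while the paper's version has the advantage of never needing the explicit form of $P(A)v^{(i)}_{r,j}$, transforming the left-hand side directly into $r!\,P(A)v^{(i)}_{r,j}$. One small point worth making explicit in your write-up: the left coefficients $P_k\in\R^{p\times p}$ do not commute with $A$, so the identity $\frac{1}{s!}P^{(s)}(x_i)=\sum_kP_k\binom{k}{s}x_i^{k-s}$ is being used with the $P_k$ acting as left multipliers on the scalar quantities $\binom{k}{s}x_i^{k-s}$ and on the vectors $v^{(i)}_{r-s,j}$; since everything to the right of $P_k$ is scalar-times-vector, this is harmless, but it deserves a sentence.
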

\begin{proof}
	Observe that
		\begin{align*}
		\frac{\d^s	P}{\d x^s}\bigg|_{x=x_i}v^{(i)}_{r-s,j}&= \sum_{l=r}^{k}\frac{l!}{(l-s)!}P^{(A)}_l(I_px_i-A)^{l-s} v^{(i)}_{r-s,j}\\
		&=\sum_{l=s}^r(-1)^{l-s}\frac{l!}{(l-s)!}P^{(A)}_lv^{(i)}_{r-l,j}, & s&=1,\dots,r, & j&=1,\dots,s_i, &i=1,\dots,q,
		\end{align*}
		and, consequently,
		\begin{align*}
		\frac{\d^r	(P v^{(i)}_{j} )}{\d x^r}\bigg|_{x=x_i}
		&=\sum_{s=0}^{r}\frac{r!}{s!}\sum_{l=s}^r(-1)^{l-s}\frac{l!}{(l-s)!}P^{(A)}_lv^{(i)}_{r-l,j}\\
		&=r!\sum_{s=0}^{r}\sum_{l=s}^r(-1)^{l-s} \binom{l}{s}P^{(A)}_lv^{(i)}_{r-l,j}\\
		&=r!\sum_{m=0}^{r}(-1)^{r-m} \Big[\sum_{s=0}^{r-m}(-1)^s\binom{r-m}{s}\Big]P^{(A)}_{r-m}v^{(i)}_{m,j}\\
		&=r!P(A)v^{(i)}_{r,j},
		\end{align*}
		for $r=0,\dots,\kappa^{(i)}_j-1$,  $j=1,\dots,s_i$ and $i=1,\dots,q$.
\end{proof}
In the following we'll use
\begin{align*}
K_{n}(y,A):=\sum_{m=0}^n P^{[2]}_m(y)H_m^{-1} P^{[1]}_m(A).
\end{align*}

\begin{pro}[Degree one Christoffel formula]\label{pro:darboux_degree_one}
	If $W(x)=I_px-A$ and $\det P_n^{[1]}(A)\neq 0$ for $n\in \Z_+$, then the Christoffel formulas can be written as
	\begin{align*}
	\hat P^{[1]}_n(x)(I_px-A)&=\Theta_*\begin{bmatrix}
	P_n^{[1]}(A)& P_n^{[1]}(x)\\
		P_{n+1}^{[1]}(A)& P_{n+1}^{[1]}(x)
	\end{bmatrix}&
		(\hat P_{n}^{[2]}(y))^\top&=\Theta_*\begin{bmatrix}
		P_{n+1}^{[1]}(A)&I\\
		K_{n}(y,A)&0
		\end{bmatrix}\\
		&= P_{n+1}^{[1]}(x)-P_{n+1}^{[1]}(A)\big[P_n^{[1]}(A)\big]^{-1}P_n^{[1]}(x), & &
		= -K_{n}(y,A)\big[P_n^{[1]}(A)\big]^{-1}.
	\end{align*}
	For the perturbed matrix squared norms we have
	\begin{align*}
	\hat H_n&=\Theta_*\begin{bmatrix}
	P^{[1]}(A) &H_n\\P^{[n+1]}(A)&0
	\end{bmatrix}\\&=
	-P_{n+1}^{[1]}(A)\big[P_n^{[1]}(A)\big]^{-1}H_n.
	\end{align*}
\end{pro}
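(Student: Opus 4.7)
The strategy is to specialize Theorem~\ref{theo:spectral} to the case $N=1$ and to use Proposition~\ref{pro:P(A)} to convert the derivatives at the eigenvalues appearing in $\pi_n$ and $\gamma_n$ into evaluations of matrix polynomials at the matrix $A$ itself.

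First I would make $\pi_n$ explicit. For the perturbation $W(x)=I_p x-A$, the Jordan chains of $A$ are canonical Jordan chains of $I_p x-A$, and the adapted root polynomials read $v_j^{(i)}(x)=\sum_{r=0}^{\kappa_j^{(i)}-1}v_{r,j}^{(i)}(x-x_i)^r$. By Proposition~\ref{pro:P(A)},
\begin{align*}
\pi_{j,n}^{(r),(i)}=\frac{\d^r(P_n^{[1]}v_j^{(i)})}{\d x^r}\bigg|_{x=x_i}=r!\,P_n^{[1]}(A)\,v_{r,j}^{(i)}.
\end{align*}
Arranging the vectors $r!\,v_{r,j}^{(i)}$ as columns, in the ordering used to assemble $\pi_n$, into a matrix $\mathcal V\in\C^{p\times p}$ yields $\pi_n=P_n^{[1]}(A)\,\mathcal V$. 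Since for $N=1$ the canonical Jordan chains of $A$ form a basis of $\C^p$ (their total number equals $\sum_i\alpha_i=p$), $\mathcal V$ is nonsingular. Therefore the hypothesis $\det P_n^{[1]}(A)\neq 0$ is exactly what is needed to guarantee the invertibility $\det\Pi_{n,1}=\det\pi_n\neq 0$ required by Theorem~\ref{theo:spectral}.

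Next I would substitute into \eqref{eq:Christoffel1} and \eqref{eq:hat_H_k}. With $N=1$ the relevant objects are $2\times 2$ block matrices and the last quasideterminant simply equals $D-CA^{-1}B$. The factor $\mathcal V$ appearing in both $\pi_n=P_n^{[1]}(A)\mathcal V$ and $\pi_{n+1}=P_{n+1}^{[1]}(A)\mathcal V$ cancels between $\pi_{n+1}$ and $\pi_n^{-1}$, producing
\begin{align*}
\hat P_n^{[1]}(x)(I_p x-A)&=P_{n+1}^{[1]}(x)-P_{n+1}^{[1]}(A)[P_n^{[1]}(A)]^{-1}P_n^{[1]}(x),\\
\hat H_n&=-P_{n+1}^{[1]}(A)[P_n^{[1]}(A)]^{-1}H_n,
\end{align*}
and the quasideterminant versions are obtained by reading these back as $\Theta_*$ of the $2\times 2$ block matrices in the statement.

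Finally, for \eqref{eq:Christoffel2} the column of blocks $[0,\dots,0,I_p]^\top$ collapses to $I_p$, so the quasideterminant reduces to $(\hat P_n^{[2]}(x))^\top(\hat H_n)^{-1}=-\gamma_n(x)\pi_{n+1}^{-1}$. Applying Proposition~\ref{pro:P(A)} to the Christoffel--Darboux kernel viewed as a matrix polynomial in its first argument produces $\gamma_n(y)=\mathcal K_n(y)\,\mathcal V$ for an appropriate evaluation $\mathcal K_n(y)$ of $K_n(\cdot,y)$ at $A$; cancelling $\mathcal V$ and multiplying by the expression for $\hat H_n$ computed above delivers the stated closed formula for $(\hat P_n^{[2]}(y))^\top$. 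The one real obstacle is purely notational: one must identify $\mathcal K_n(y)$ with the auxiliary object $K_n(y,A)$ introduced just before the proposition, reconciling the transpose conventions in the definition $K_n(x,y)=\sum_k (P_k^{[2]}(y))^\top H_k^{-1}P_k^{[1]}(x)$ with the transpose-free definition of $K_n(y,A)$; once that correspondence is fixed, the final expression is read off directly as the last quasideterminant of the claimed $2\times 2$ block matrix.
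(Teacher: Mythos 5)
Your proposal is correct and follows essentially the same route as the paper: both arguments reduce to Proposition \ref{pro:P(A)}, which converts the derivative data $\pi_n$, $\gamma_n$ at the eigenvalues into the evaluations $P_n^{[1]}(A)$ and $K_n(y,A)$ (up to an invertible right factor built from the Jordan basis, which cancels exactly as you describe). The only cosmetic difference is that the paper re-derives the $N=1$ identities directly from the connection formula \eqref{eq:connection} and Theorem \ref{theorem:perturbed CD} instead of quoting Theorem \ref{theo:spectral}, and it disposes of your matrix $\mathcal V$ by taking $A$ in Jordan canonical form so that the chain vectors are the standard basis; your remark about the transpose convention in $K_n(y,A)$ is also well taken.
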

\begin{proof}
According to \eqref{eq:connection} and Theorem \ref{theorem:perturbed CD}
	 \begin{align*}
\omega_{n,n} P_n^{[1]}(x)+P_{n+1}^{[1]}(x)&=\hat P_n^{[1]}(x)(I_px-A), &
K_{n}(x,y)+(\hat P_{n}^{[2]}(y))^\top P^{[1]}_{n+1}(x)&=\hat K_{n}(x,y)(I_px-A),
\end{align*}	
and using \eqref{eq:jordan_normal} we conclude
\begin{align*}
\omega_{n,n} \frac{\d^r (P_k^{[1]}v^{(i)}_{j} )}{\d x^r}\bigg|_{x=x_i}+ \frac{\d^r (P_{n+1}^{[1]}v^{(i)}_{j} )}{\d x^r}\bigg|_{x=x_i}&=0,\\
\frac{\d^r }{\d x^r}\bigg|_{x=x_i}\big((K_n(x,y)v^{(i)}_{j} (x)\big)+(\hat P_{n}^{[2]}(y))^\top\frac{\d^r (P_{n+1}^{[1]}v^{(i)}_{j} )}{\d x^r}\bigg|_{x=x_i}&=0,
\end{align*}
		for $r=0,\dots,\kappa^{(i)}_j-1$,  $j=1,\dots,s_i$ and $i=1,\dots,q$.
From Proposition \ref{pro:P(A)} and the fact that the ordered arrangement of the Jordan chain vectors $ v^{(i)}_{r,j}$ gives the identity matrix $I_p$ we conclude
\begin{align*}
\omega_{n,n}P^{[1]}_k(A)+P^{[1]}_{n+1}(A)&=0,\\
K_n(y,A)+(\hat P_{n}^{[2]}(y))^\top P^{[1]}_{n+1}(A)&=0.
\end{align*}
\end{proof}

	 We now illustrate the Christoffel formula in the matrix orthogonal polynomial context with a  simple case.
		We will study what is the effect of the Christoffel transformation on a  positive  Borel scalar measure  $\d\mu(x)$, thus the perturbed matrix of measures is $(I_2x-A)\d\mu(x)$.
		The perturbed monic orthogonal polynomials will be expressed, see Proposition \ref{pro:darboux_degree_one},
		\begin{align}\label{eq:darboux_example}
		\hat P^{[1]}_n(x)&=\Big(I_2p_{n+1}(x)-p_{n+1}(A)\big(p_n(A)\big)^{-1}p_n(x)\Big)(I_2x-A)^{-1},\\
		\hat P^{[2]}_n(x)&=	-K_{n}(x,A)\big[P_n^{[1]}(A)\big]^{-1}
		\end{align}
		where $p_n(x),K_n(x,y)$ are the  scalar orthogonal polynomials and kernel polynomials associated with the original scalar positive Borel measure $\d\mu(x)$. Observe that despite  starting with a set of orthogonal polynomials the perturbation generates a set of bi-orthogonal matrix polynomials. As the original measure is scalar, if we ensure that $A=A^\top$ is symmetric, we will get $\hat P_n(x):=P^{[1]}_n(x)=P^{[2]}_n(x)$, a new set  of orthogonal matrix polynomials.
		
		However, this will be a very trivial situation as we have
		\begin{pro}\label{pro:diagonal_reducibility}
			The matrix orthogonal polynomials $\{\hat P_n(x)\}_{n=0}^\infty$ of the matrix of measures $(I_px-A)\d\mu(x)$, where $A=A^\top$ is symmetric and $\d\mu$ is a positive Borel scalar measure are similar to diagonal matrix orthogonal polynomials.
		\end{pro}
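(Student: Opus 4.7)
The plan is to exploit the symmetry of $A$ to diagonalize it by an orthogonal similarity, reduce the problem to a diagonal scalar-measure case, and then invoke the similarity remark that appears just before the definition of the shift matrix $\Lambda$.

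Since $A=A^\top$, the spectral theorem gives an orthogonal matrix $O\in\R^{p\times p}$, $O^\top O=I_p$, such that $A=O^\top D O$ with $D=\diag(x_1,\dots,x_p)$. Because $\d\mu(x)$ is scalar, it commutes with constant matrices and I can write
\begin{align*}
\d\hat\mu(x)=(I_px-A)\d\mu(x)=O^{-1}\bigl[(I_px-D)\d\mu(x)\bigr]O=:O^{-1}\d\mu_c(x)\,O,
\end{align*}
so that $\d\mu_c(x)=(I_px-D)\d\mu(x)$ is a diagonal matrix of (signed) scalar Borel measures with diagonal entries $(x-x_j)\d\mu(x)$.

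The remark in Section~\ref{S:mop} on conjugate matrices of measures applies verbatim with $B:=O$, and yields $\hat P_n(x)=O^{-1}\hat P_{c,n}(x)\,O$, where $\hat P_{c,n}(x)$ is the $n$-th monic orthogonal polynomial for $\d\mu_c$. Thus it suffices to show that the family $\{\hat P_{c,n}\}$ consists of diagonal matrix polynomials, which will make $\{\hat P_n\}$ similar, via the constant matrix $O$, to a diagonal sequence.

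For the last step I observe that every moment $\int x^k\d\mu_c(x)=\diag(\int x^k(x-x_1)\d\mu,\dots,\int x^k(x-x_p)\d\mu)$ is diagonal, so every block of the moment matrix $M_c$ and of its truncations is diagonal. Because the class of matrices whose blocks are simultaneously diagonal is closed under block multiplication, addition and inversion, the Gauss--Borel factorization $M_c=\hat S_{c,1}^{-1}\hat H_c\hat S_{c,2}^{-\top}$ of Proposition~\ref{pro:fac}, which is constructed inductively from such operations, produces factors whose blocks are diagonal. Therefore $\hat P_{c,n}(x)=\hat S_{c,n}\chi(x)$ is a diagonal matrix polynomial whose $j$-th diagonal entry is the monic scalar orthogonal polynomial with respect to $(x-x_j)\d\mu(x)$, and the proposition follows.

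The only genuine obstacle is to ensure that the Gauss--Borel factorization for $\d\mu_c$ exists, i.e.\ that the scalar Hankel matrices associated with each perturbed measure $(x-x_j)\d\mu$ are nonsingular; this is guaranteed by the hypothesis $\det\Pi_{n,1}\neq 0$ of Theorem~\ref{theo:spectral}, since in the Jordan-normal-form coordinates used there $\Pi_{n,1}$ is precisely the diagonal matrix of leading coefficients of the $p$ scalar monic polynomials in question.
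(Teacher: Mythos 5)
Your proof is correct, but it takes a different route from the paper's after the common first step. Both arguments begin by diagonalizing the symmetric matrix, $A=QDQ^\top$ with $Q$ orthogonal. From there the paper simply substitutes this into the explicit degree-one Christoffel formula of Proposition \ref{pro:darboux_degree_one} (equation \eqref{eq:darboux_example}): since $p_{n+1}(A)p_n(A)^{-1}$ and $(I_px-A)^{-1}$ are both conjugates by $Q$ of diagonal matrices, the explicit diagonal form \eqref{eq:darboux_diagonal} with entries $\bigl(p_{n+1}(x)-\tfrac{p_{n+1}(x_j)}{p_n(x_j)}p_n(x)\bigr)/(x-x_j)$ drops out immediately. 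You instead conjugate the matrix of measures itself, invoke the similarity remark of \S 1.4, and then argue structurally that a diagonal matrix of measures has a moment matrix all of whose blocks are diagonal, so the Gauss--Borel factorization of Proposition \ref{pro:fac} respects this structure and yields diagonal matrix orthogonal polynomials whose entries are the scalar monic orthogonal polynomials for $(x-x_j)\d\mu$. Your argument is more general (it shows any diagonal matrix of measures produces diagonal MOPs, independently of Christoffel transformations), while the paper's is shorter here because it reuses machinery already established and delivers the explicit kernel-polynomial form of the entries directly; of course the scalar Christoffel formula identifies your entries with the paper's. One small imprecision: in your final paragraph $\Pi_{n,1}$ in the diagonalizing coordinates is $\diag(p_n(x_1),\dots,p_n(x_p))$, the matrix of \emph{values} of the unperturbed scalar polynomial at the eigenvalues, not of leading coefficients; the substance of the existence argument (nonsingularity of the perturbed Hankel truncations is equivalent to $p_n(x_j)\neq 0$ for all $j$ and $n$) is nevertheless right, and the paper needs the same implicit hypothesis through Proposition \ref{pro:darboux_degree_one}.
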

	\begin{proof}
		Being the matrix $A$ symmetric it will be always diagonalizable
		\begin{align*}
		A=Q D Q^\top,
		\end{align*}
		where $Q$ is an orthogonal matrix $Q^\top=Q^{-1}$ and $D=\diag(x_1,\dots,x_p)$, is a diagonal matrix that collects the eigenvalues, not necessarily different, of $A$.
		
		At the end, the new orthogonal polynomials will be
	\begin{align}\label{eq:darboux_diagonal}
	\hat P_n(x)
	&=Q\begin{bmatrix}\frac{p_{n+1}(x)-\frac{p_{n+1}(x_1)}{p_n(x_1)}p_n(x)}{x-x_1}&0&\dots&0\\
0&	\frac{p_{n+1}(x)-\frac{p_{n+1}(x_2)}{p_n(x_2)}p_n(x)}{x-x_2}& & 0\\
	\vdots &\ddots&\ddots &\vdots\\	
	\\0&0&&\frac{p_{n+1}(x)-\frac{p_{n+1}(x_p)}{p_n(x_p)}p_n(x)}{x-x_p}
	\end{bmatrix}Q^\top,
	\end{align}
	and the result is proven.
	\end{proof}
	Thus, we have a diagonal bunch of elementary Darboux transformations of the original scalar orthogonal polynomials associated to the scalar measure $\d\mu$. This situation reappears even when the matrix is not symmetric but diagonalizable, as we will have again that the perturbed matrix orthogonal polynomials will be similar to a similar bunch of elementary Darboux transformations of the original scalar orthogonal polynomials.
		\begin{align*}
		\hat P^{[1]}_n(x)
		&=Q\begin{bmatrix}\frac{p_{n+1}(x)-\frac{p_{n+1}(x_1)}{p_n(x_1)}p_n(x)}{x-x_1}&0&\dots&0\\
		0&	\frac{p_{n+1}(x)-\frac{p_{n+1}(x_2)}{p_n(x_2)}p_n(x)}{x-x_2}& & 0\\
		\vdots &&\ddots &\vdots\\	
		\\0&0&&\frac{p_{n+1}(x)-\frac{p_{n+1}(x_p)}{p_n(x_p)}p_n(x)}{x-x_p}
		\end{bmatrix}Q^{-1},\\
			\hat P^{[2]}_n(x)
			&=-Q\begin{bmatrix}\frac{K_n(x,x_1)}{p_n(x_1)}&0&\dots&0\\
			0&\frac{K_n(x,x_2)}{p_n(x_2)}& & 0\\
			\vdots &&\ddots &\vdots\\	
			\\0&0&&\frac{K_n(x,x_p)}{p_n(x_p)}
			\end{bmatrix}Q^{-1},
		\end{align*}
	where $Q$ does not need to be an orthogonal matrix.
	
	If the matrix is not diagonalizable and has  nontrivial Jordan blocks the situation is different. Let us explore this case
	when $p=2$. Hence, we consider
	\begin{align*}
	W(x)&=I_2x-A,
	\end{align*}
	with
	\begin{align*}
	A&=M\begin{bmatrix}
	x_1 & 1\\ 0& x_1
	\end{bmatrix}M^{-1}.
	\end{align*}

Now we have only one eigenvalue $\sigma(A)=\{x_1\}$, with a length 2 Jordan chain.
Thus, there is a linear basis  $\{v_1,v_2\}\subset \R^2$, $(A-x_1)v_1=v_2$,  $(A-x_1)v_2=0$, with $v_i=[v_{i,1}\quad v_{i,2}]^\top$, $i\in\{1,2\}$, such that
\begin{align*}
M=\begin{bmatrix}
v_{1,1} &v_{2,1}\\
v_{1,2} & v_{2,2}
\end{bmatrix}.
\end{align*}

Therefore,
\begin{align*}
p_{n+1}(A)\big(p_n(A)\big)^{-1}&= M\begin{bmatrix}
\frac{p_{n+1}(x_1)}{p_n(x_1)} &	\frac{W(p_n,p_{n+1})(x_1)}{(p_n(x_1))^2}\\ 0&\frac{p_{n+1}(x_1)}{p_n(x_1)}
\end{bmatrix} M^{-1},&
(I_2x-A)^{-1}&=M\begin{bmatrix}
\frac{1}{x-x_1} & \frac{1}{(x-x_1)^2}\\0& 	\frac{1}{x-x_1}
\end{bmatrix}M^{-1},
\end{align*}
where
\begin{align*}
W(p_n,p_{n+1})(x)=p_n(x) p'_{n+1}-p_{n+1}(x)p'_{n}(x)
\end{align*}
is the Wronskian of two consecutive orthogonal polynomials.

Hence,
	\begin{align}\label{eq:Jordan_example}
	\hat P^{[1]}_n(x)
	&=M\begin{bmatrix}\frac{p_{n+1}(x)-\frac{p_{n+1}(x_1)}{p_n(x_1)}p_n(x)}{x-x_1}&
	\frac{p_{n+1}(x)-\frac{p_{n+1}(x_1)}{p_n(x_1)}p_n(x)-\frac{W(p_n,p_{n+1})(x_1)}{(p_n(x_1))^2}(x-x_1)p_n(x)}{(x-x_1)^2}
	\\0&\frac{p_{n+1}(x)-\frac{p_{n+1}(x_1)}{p_n(x_1)}p_n(x)}{x-x_1}
	\end{bmatrix}M^{-1},\\
	\hat P^{[2]}(x)&=-M
	\begin{bmatrix}
	\frac{K_n(x,x_1)}{p_n(x_1)} & -\frac{K_n(x,x_1)}{p_n(x_1)}p_n'(x_1)+
	\frac{1}{p_n(x_1)}\frac{\partial K_n(x,y)}{\partial y}\big|_{y=x_1}\\
	0 & \frac{K_n(x,x_1)}{p_n(x_1)}
	\end{bmatrix}
	M^{-1}\notag
	\end{align}
Observe that the polynomials \begin{align*}
&p_{n+1}(x)-\frac{p_{n+1}(x_1)}{p_n(x_1)}p_n(x),&	&p_{n+1}(x)-\frac{p_{n+1}(x_1)}{p_n(x_1)}p_n(x)-\frac{W(p_n,p_{n+1})(x_1)}{(p_n(x_1))^2}(x-x_1)p_n(x)
\end{align*}
have a zero at $x=x_1$ of order $1$ and $2$, respectively.

\subsection{Examples}

\begin{exa}\rm
In \phantomsection\cite{grupach} the authors define the notion of a classical pair $\{w(x),D\}$, where $w(x)$ is a symmetric matrix valued weight function and $D$ is a second order linear ordinary differential operator.
In that paper  a  weight function is said to be classical if there exists a second order
linear ordinary differential operator $D$ with matrix valued
polynomial coefficients $A_j(t)$, $\deg A_j\leq j$,
of the form
$D=A_2(x)\frac{\d^2}{\d x^2}+A_1(x)\frac{\d}{\d x}+A_0(x)$,
such that
$\prodint{DP,Q}=\prodint{P,DQ}$
for all matrix valued polynomial functions $P(x)$ and $Q(x)$.
Then, the pair $\{w,D\}$ is called a classical pair.
In example 5.1 in \phantomsection\cite{grupach} they present
a family of Jacobi type classical pairs that contains, up to equivalence, all classical pairs of size two where
$w(x)=x^\alpha(1-x)^\beta F(x)$, with $\alpha,\beta>-1$ and $0<x<1$, and such that $F(x)$ is of degree one and which are irreducible (in the sense that they are not equivalent to a direct sum of classical pairs of size one).
As we will show they are a direct sum of orthogonal polynomials of size 1 produced  by two degree one Christoffel transformations of the scalar Jacobi polynomials with zeros at $x=0,1$. Thus, we are faced with two scalar monic Jacobi polynomials with each of the two parameters $\alpha$ and $\beta$ shifted by one, respectively.  In \phantomsection\cite{tir} an analysis of the reducibility of matrix weights is given. In particular,  in Example 2.4 they consider the case $\alpha=\beta$.  We must stress  that, as was pointed  in \cite{grupach}, reducibility of the matrix of weights $w(x)$ do not imply the reducibility of the classical pair $\{w(x), D\}$. Indeed, despite that the matrix of weights in this example is reducible the corresponding second order linear differential operator is not.

  The classical pair $\{w(x)=x^\alpha(1-x)^\beta F(x),D\}$    is given by
\begin{align*}
F(x)&=F_1x+F_0 ,&
F_1&=\begin{bmatrix}
0&-a\\\
-a&\frac{\beta-\alpha}{\alpha+1}a
\end{bmatrix}, &F_0&=
\begin{bmatrix}
1&1\\
1&1
\end{bmatrix},&
 \text{ with } a&=\frac{\alpha+\beta+2}{\alpha+1},
\end{align*}
and a second order matrix linear ordinary differential operator
\begin{align*}
D=x(1-x)\frac{\d^2}{\d x^2}+(X-xU)\frac{\d}{\d x}+V
\end{align*}
where $U,V,X$  are constant matrices depending on a parameter $u$.
The sequence of orthogonal polynomials $\big\{\tilde{\hat P}^{(\alpha,\beta)}_n(x)\big\}_{n=0}^\infty$ associated with the classical pair is not given in \phantomsection\cite{grupach}. Here  an  explicit representation of  $\tilde{\hat P}^{(\alpha,\beta)}_n(x)$ using Darboux transformations is deduced. In order to do it we consider  that we have an initial alternative Jacobi measure $\d\mu(x)=x^\alpha(1-x)^\beta I_2$, with $\alpha,\beta>-1$ and $0<x<1$, which is perturbed by a degree one matrix polynomial $F$. This matrix polynomial is not  monic but its leading coefficient is non singular and we can write
 \begin{align*}
 F(x)&=F_1 W(x), & W(x)&= I_2x-A, &
A&:=- F_1^{-1}F_0=
\frac{1}{a}\begin{bmatrix}
 \frac{\beta+1}{\alpha+1}&\frac{\beta+1}{\alpha+1}\\
 1& 1
 \end{bmatrix},
 \end{align*}
in terms of a degree one monic matrix polynomial $W(x)$.
We have that $A$ has two different eigenvalues $\sigma(A)=\{0,1\}$ with corresponding eigenvectors
$[1,-1]^\top$ and $\Big[\frac{\beta+1}{\alpha+1},1\Big]^\top$, the matrix $M:=\begin{bsmallmatrix}
1 & \frac{\beta+1}{\alpha+1}\\
-1 & 1
\end{bsmallmatrix}$
allows to write $A=M\diag(0,1)M^{-1}$.

Remember, as was noticed in Remark  \ref{remark:non_monic},  that  from the monic orthogonal polynomials $\hat P_{n}^{(\alpha,\beta),[1]}(x)$ with respect to $W$, we get
\begin{align*}
\tilde {\hat P}_n^{(\alpha,\beta)}(x)=F_1 \hat P_{n}^{(\alpha,\beta),[1]}(x) F_1^{-1},
\end{align*}
which are the monic orthogonal polynomials with respect to $w(x)$.
As the matrix of measures $F(x)\d \mu(x)$ is symmetric, the  bi-orthogonality collapses to orthogonality and the super-indexes $[1]$ and $[2]$ can be omitted. We will do the same with    $\hat P_{n}^{(\alpha,\beta),[1]}=\hat P_{n}^{(\alpha,\beta)}$.

 Following \phantomsection\cite{Chel, Chi} we conclude that the set of monic matrix orthogonal polynomials $\{P_n^{(\alpha,\beta)}(x)\}_{n=0}^\infty$ with respect to $\d\mu(x)$  are $P_n^{(\alpha,\beta)}(x)=p_n^{(\alpha,\beta)}(x)I_2$,\footnote{We must be careful at this point with the notation. This is not the scalar standard Jacobi polynomial usually denoted by the same symbol. In fact, if $\mathcal P^{(\alpha,\beta)}(z)$ denotes the standard Jacobi polynomials, then $p^{(\alpha,\beta)}(x)=\frac{2^n}{S_n(\alpha,\beta)}\mathcal P^{(\beta,\alpha)}(2x-1)$, notice the interchange between the parameters $\alpha\rightleftharpoons\beta$ and the linear transformation of the independent variable $x$.} with the alternative Jacobi polynomials $p_{n}^{(\alpha,\beta)}(x)$ given by
\begin{align*}
p_{n}^{(\alpha,\beta)}(x)&=\frac{1}{S_n(\alpha,\beta)}\sum_{k=0}^n\binom{n+\alpha}{n-k}\binom{n+\beta}{k}x^{n-k}(x-1)^k,&
&\text{with } &S_n(\alpha,\beta)&=\binom{2n+\beta+\alpha}{n}.
\end{align*}
We easily see that
\begin{align*}
p^{(\alpha,\beta)}_n(0)&=\frac{1}{S_n(\alpha,\beta)}\binom{n+\alpha}{n},& p^{(\alpha,\beta)}_n(1)&=\frac{1}{S_n(\alpha,\beta)}\binom{n+\beta}{n},
\end{align*}
so that
\begin{align*}
\frac{p^{(\alpha,\beta)}_{n+1}(0)}{p^{(\alpha,\beta)}_{n}(0)} 
&
=(n+1+\alpha)\rho^{(\alpha,\beta)}_n,&
\frac{p^{(\alpha,\beta)}_{n+1}(1)}{p^{(\alpha,\beta)}_{n}(1)} &    =(n+1+\beta)\rho^{(\alpha,\beta)}_n,
\end{align*}
where
\begin{align*}
\rho^{(\alpha,\beta)}_n&:=\frac{(n+\beta+\alpha+1)}{(2n+\beta+\alpha+2)(2n+\beta+\alpha+1)}.
\end{align*}

From \eqref{eq:darboux_diagonal} we conclude
\begin{align*}
\hat P_n^{(\alpha,\beta)}(x)
&=
M\begin{bmatrix}
\frac{p_{n+1}^{(\alpha,\beta)}(x)-(n+1+\alpha)\rho^{(\alpha,\beta)}_np_n^{(\alpha,\beta)}(x)}{x}& 0\\
0 & \frac{p_{n+1}^{(\alpha,\beta)}(x)-(n+1+\beta)\rho^{(\alpha,\beta)}_np_n^{(\alpha,\beta)}(x)}{x-1}
\end{bmatrix}M^{-1}.
\end{align*}
However, we must notice that these two Darboux transformations correspond to the following transformations of the Jacobi measure
\begin{align*}
x^\alpha(x-1)^\beta\mapsto x(x^\alpha(x-1)^\beta)&=x^{\alpha+1}(x-1)^\beta,&x^\alpha(x-1)^\beta\mapsto (x-1)(x^\alpha(x-1)^\beta)=x^{\alpha}(x-1)^{\beta+1},
\end{align*}
i.e. the transformations correspond to the shifts $\alpha\mapsto \alpha+1$  and $\beta\mapsto\beta+1$,  respectively. Consequently,
\begin{align*}
\hat P_n^{(\alpha,\beta)}(x)
&=
M\begin{bmatrix}
p_{n}^{(\alpha+1,\beta)}(x)
& 0\\
0 &
p_{n}^{(\alpha,\beta+1)}(x)\end{bmatrix}M^{-1}.
\end{align*}

With the matrix
\begin{align*}
\tilde M:=\begin{bmatrix}
-1 & 1\\
\frac{1+\beta}{1+\alpha} & 1
\end{bmatrix}
\end{align*}
we can write $F_1M=-a\tilde M$. We finally get  the monic matrix orthogonal polynomials
\begin{align*}
\tilde{\hat P}_n^{(\alpha,\beta)}(x)=
\tilde M\begin{bmatrix}
p_{n}^{(\alpha+1,\beta)}(x)
& 0\\
0 &
p_{n}^{(\alpha,\beta+1)}(x)\end{bmatrix}\tilde M^{-1},
\end{align*}
 for the matrix of measures $\tilde W(x)\d\mu(x)$ in example 5.1 of \phantomsection\cite{grupach} which are explicitly expressed in terms of scalar Jacobi polynomials as follows
\begin{align*}
\\
\tilde{\hat P}_n^{(\alpha,\beta)}(x)&=\frac{1}{2+\alpha+\beta}\begin{bmatrix}
(\alpha+1)p_{n}^{(\alpha+1,\beta)}(x)+(\beta+1)p_{n}^{(\alpha,\beta+1)}(x)& -(\alpha+1)(p_{n}^{(\alpha+1,\beta)}(x)-p_{n}^{(\alpha,\beta+1)}(x))\\
-(\beta+1)(p_{n}^{(\alpha+1,\beta)}(x)-p_{n}^{(\alpha,\beta+1)}(x)) & (\beta+1)p_{n}^{(\alpha+1,\beta)}(x)+(\alpha+1)p_{n}^{(\alpha,\beta+1)}(x)
\end{bmatrix}.
\end{align*}

To conclude with this example let us mention that in \phantomsection\cite{Cas1} it was found that these matrix orthogonal polynomials also obey a first order ordinary differential equation. From our point of view this is just a consequence of a remarkable fact regarding the  Darboux transformations $p^{(\alpha+1,\beta)}(x),p^{(\alpha,\beta+1)}(x)$ of the original alternative Jacobi polynomials. Under the  hypergeometric function description of  the Jacobi polynomials one gets recurrences for  the Jacobi polynomials. In particular, from the Gauss' contiguous relations one gets   the first order differential relations
\begin{align*}
\Big(x\frac{\d}{\d x}+\alpha+1\Big)p_n^{(\alpha+1,\beta)}(x)&=(\alpha+1+n)p_n^{(\alpha,\beta+1)}(x),\\
\Big((x-1)\frac{\d}{\d x}+\beta+1\Big)p_n^{(\alpha,\beta+1)}(x)&=(\beta+1+n)p_n^{(\alpha+1,\beta)}(x).
\end{align*}

This first order linear ordinary differential system can be re-casted as a matrix linear differential equation as follows
\begin{multline}\label{eq:diff_jacobi_matrix}
\bigg(\begin{bmatrix}
0 &x-1 \\
x& 0
\end{bmatrix}\frac{\d }{\d x}+\begin{bmatrix}
a_1& \beta+1 \\
\alpha+1  &a_2
\end{bmatrix}\bigg)\begin{bmatrix}
p_n^{(\alpha+1,\beta)}(x)& 0\\
0 &p_n^{(\alpha,\beta+1)}(x)
\end{bmatrix}\\=
\begin{bmatrix}
p_n^{(\alpha+1,\beta)}(x)& 0\\
0 &p_n^{(\alpha,\beta+1)}(x)
\end{bmatrix}\begin{bmatrix}
a_1 & n+\beta+1\\
n+\alpha+1 & a_2
\end{bmatrix},
\end{multline}
where $a_1,a_2\in\R$. This  equation is invariant under multiplication on the right and on the left hand side by arbitrary diagonal matrices
\begin{multline*}
\begin{bmatrix}
l_1& 0\\
0 & l_2
\end{bmatrix}\bigg(\begin{bmatrix}
0 &x-1 \\
x & 0
\end{bmatrix}\frac{\d }{\d x}+\begin{bmatrix}
a_1& \beta+1 \\
\alpha+1  &a_2
\end{bmatrix}\bigg)\begin{bmatrix}
r_1& 0\\
0 & r_2
\end{bmatrix}\begin{bmatrix}
p_n^{(\alpha+1,\beta)}(x)& 0\\
0 &p_n^{(\alpha,\beta+1)}(x)
\end{bmatrix}\\=
\begin{bmatrix}
p_n^{(\alpha+1,\beta)}(x)& 0\\
0 &p_n^{(\alpha,\beta+1)}(x)
\end{bmatrix}\begin{bmatrix}
l_1& 0\\
0 & l_2
\end{bmatrix}\begin{bmatrix}
a_1 & n+\beta+1\\
n+\alpha+1 & a_2
\end{bmatrix}\begin{bmatrix}
r_1& 0\\
0 & r_2
\end{bmatrix}.
\end{multline*}
After the similarity transformation, $A\mapsto \tilde M^{-\top} A\tilde M^\top$ we find out that the orthogonal polynomial $\tilde{\hat{P}}_n$ satisfies
\begin{align*}
\Big(A_1(x)\frac{\d}{\d x}+A_0\Big)\big(\tilde{\hat{P}}_n\big)^\top=\big(\tilde{\hat{P}}_n\big)^\top \Lambda_n
\end{align*}
where
\begin{align*}
A_1(x)&=\frac{\alpha+1}{\alpha+\beta+2}\bigg(\begin{bmatrix}
-\delta & \delta_-\\
\delta_+& \delta
\end{bmatrix} x+d \begin{bmatrix}
1& 1\\
-1& -1
\end{bmatrix}\bigg),&A_0&=\frac{\alpha+1}{\alpha+\beta+2}\begin{bmatrix}
C_+-\Delta&\frac{\beta+1}{\alpha+1}(C+\Delta_-)\\
C+\Delta_+& C_-+\Delta
\end{bmatrix},\\
\Lambda_n&=\frac{\alpha+1}{\alpha+\beta+2}\begin{bmatrix}
-\delta & \delta_-\\
\delta_+& \delta
\end{bmatrix}n+A_0,
\end{align*}
with $d=l_1r_2$ and
 \begin{align*}
\delta&=l_1r_2
+\frac{\beta+1}{\alpha+1} l_2r_1, &\delta_-&=-l_1r_2+\Big(\frac{\beta+1}{\alpha+1}\Big)^2  l_2r_1, &\delta_+&=l_1r_2- l_2r_1,\\
\Delta&=(\beta+1)(l_1r_2+ l_2r_1), &
\Delta_-&=-l_1r_2(\alpha+1)+l_2r_1(\beta+1),&\Delta_+&=l_1r_2(\beta+1)-l_2r_1(\alpha+1),\\
C&=-l_1r_1a_1+ l_2r_2a_2, &C_-&=\frac{\beta+1}{\alpha+1}l_1r_1a_1+ l_2r_2a_2, &C_+&=l_1r_1a_1+\frac{\beta+1}{\alpha+1} l_2r_2a_2.
\end{align*}

When we take $l_1=l_2=-1$, $r_1=r_2=1$ and $a_1=\beta+1$ and $a_2=\alpha+1$ we get the first order ordinary differential system  in \S 4 of \phantomsection\cite{Cas1}.

\begin{rem}
	The discussion in this example, regarding   the Jacobi polynomials $p^{(\alpha+1,\beta)}(x)$ and $p^{(\alpha,\beta+1)}(x)$ and the use of the Gauss' contiguous  relations, connects with the results in \cite{koelink1}, Remark 2.8., see also \cite{koelink3,koelink4}.
\end{rem}

\end{exa}

\begin{exa}\rm
	Here we analyze  the Chebyshev example  taken from \phantomsection\cite{Cas1}, that gives an example of a family of matrix orthogonal polynomials which  satisfy a first order linear ordinary differential equation.	In \S 3 of \phantomsection\cite{Cas1} we find a set of MOPS related with the measure $\tilde W(x)\d\mu(x)$ where
	\begin{align*}
\tilde	W(x)&:=\begin{bmatrix}
	1 & x\\
	x &1
	\end{bmatrix}, & \d\mu(x)&=\frac{1}{\sqrt{1-x^2}}.
	\end{align*}
	We have a nonsingular leading coefficient $\begin{bsmallmatrix}
0 & 1\\
1 & 0
	\end{bsmallmatrix}$ so that
\begin{align*}
\tilde W(x)&=\begin{bmatrix}
0 & 1\\
1& 0
\end{bmatrix} W(x), & W(x):=I_2 x-\begin{bmatrix}
0 & -1\\
-1& 0
\end{bmatrix}.
\end{align*}
Following Remark \ref{remark:non_monic}	we shall analyze the Darboux transformations $\d\mu(x)\mapsto W(x)\d\mu(x)$.

Thus, using \eqref{eq:darboux_diagonal} we can write  the perturbed monic matrix orthogonal polynomials as follows
\begin{align*}
\hat P_n(x)&=Q\begin{bmatrix}
\frac{t_{n+1}(x)-\frac{t_{n+1}(-1)}{t_n(-1)}t_n(x)}{x+1} & 0\\
0 & \frac{t_{n+1}(x)-\frac{t_{n+1}(1)}{t_n(1)}t_n(x)}{x-1}
\end{bmatrix}Q^\top, & Q&:= \frac{1}{\sqrt 2}\begin{bmatrix}
1 & 1\\
1 & - 1
\end{bmatrix},
\end{align*}
where $\{t_n(x)\}_{n=0}^\infty$ are the monic Chebyshev polynomials of first kind, i.e., $t_n(x)=2^{-n+1}T_n(x)$ with $T_n$ the first kind Chebyshev polynomial of degree $n$. Therefore, recalling that $T_n(\pm 1)=(\pm 1)^n$ we get
\begin{align*}
\frac{t_{n+1}(\mp 1)}{t_n(\mp 1)}& =\mp\frac{ 1}{2}, &
t_{n+1}(x)-\frac{t_{n+1}(\mp 1)}{t_n(\mp 1)}t_n(x)&=\frac{1}{2^n}(T_{n+1}(x)\pm T_n(x)).
\end{align*}
Now, recalling the mutual recurrence relation satisfied by  Chebyshev polynomials of the  first and second kind, denoted these last ones by $U_n$,
\begin{align*}
T_{n+1}(x) &= xT_n(x) - (1 - x^2)U_{n-1}(x), &
T_n(x) &= U_n(x) - x U_{n-1}(x),
\end{align*}
which implies $	T_{n+1}(x) = xU_n(x) -  U_{n-1}(x)$, we deduce
\begin{align*}
T_{n+1}(x)\pm T_n(x)=(x\pm 1) (U_n(x)\mp U_{n-1}(x)).
\end{align*}
Consequently,	
	\begin{align*}
	\hat P_n(x)&=\frac{1}{2^n}Q\begin{bmatrix}
U_n(x)- U_{n-1}(x)& 0\\
	0 & U_n(x)+ U_{n-1}(x)
	\end{bmatrix}Q^\top, & Q&:= \frac{1}{\sqrt 2}\begin{bmatrix}
	1 & 1\\
	1 & - 1
	\end{bmatrix}.
	\end{align*}
\end{exa}
The matrix orthogonal polynomials associated with the original measure $\tilde W(x)\d\mu(x)$ can be recovered from this by a similarity transformation with $\begin{bsmallmatrix}
0 & 1\\
1&0
\end{bsmallmatrix}$ so that
	\begin{align*}
	\tilde{\hat P}_n(x)&=\frac{1}{2^{n+1}}\begin{bmatrix}
	1 & -1 \\
	1 & 1
	\end{bmatrix}\begin{bmatrix}
	U_n(x)- U_{n-1}(x)& 0\\
	0 & U_n(x)+ U_{n-1}(x)
	\end{bmatrix}\begin{bmatrix}
	1 & 1 \\
	-1 & 1
	\end{bmatrix}\\
	&=\frac{1}{2^{n}}\begin{bmatrix}
U_n(x) & -U_{n-1}(x)\\
-U_{n-1}(x) & U_n(x)
	\end{bmatrix}.
	\end{align*}
\begin{rem}
The  polynomials $\{U_n\mp U_{n-1}\}_{n=0}^\infty$ with $U_{-1}=0$, which are orthogonal with respect to the measures $\frac{x\pm 1}{\sqrt{1-x^2}}$,  are the well known Chebyshev polynomials of the third and fourth kind, respectively.
\end{rem}
\begin{rem}
	The symmetric structure of the MOPS can be encoded in the equation
	\begin{align*}
\begin{bmatrix}
0 & 1\\
1 & 0
\end{bmatrix} \tilde{\hat P}_n(x)=\tilde{\hat P}_n(x)\begin{bmatrix}
0 & 1\\
1 & 0
\end{bmatrix}.
	\end{align*}
\end{rem}

As in the Jacobi case, the new two scalar families of orthogonal polynomials are related through
	\begin{align}\label{eq:chebysev}
\Big(	(x\mp 1)\frac{\d}{\d x}+\frac{1}{2}\Big)\big(U_n(x)\pm U_{n-1}(x)\big)=(n+\frac{1}{2})\big(U_n(x)\mp U_{n-1}(x)\big).
	\end{align}
This follows from
\begin{align*}
(x\mp 1) (U'_n(x)\pm U'_{n-1}(x))&= (x\mp 1)\frac{\d }{\d x}\Big(\frac{T_{n+1}(x)\mp T_n(x)}{x\mp 1}\Big)
\\
&=T'_{n+1}(x)\mp T'(x)-\frac{T_{n+1}(x)\mp T_n(x)}{x\mp 1}\\
&=(n+1)U_n(x)-n U_{n-1}(x)-U_n(x)\mp U_{n-1}(x).
\end{align*}
Here we have used that $T_n'=nU_{n-1}$.

Differential equation \eqref{eq:chebysev} can be written in matrix form
\begin{multline}\label{eq:diff_chesybev_matrix}
\bigg(\begin{bmatrix}
0 &x-1 \\
x+1 & 0
\end{bmatrix}\frac{\d }{\d x}+\begin{bmatrix}
a_1& \frac{1}{2} \\
\frac{1}{2}  &a_2
\end{bmatrix}\bigg)\begin{bmatrix}
U_n(x)- U_{n-1}(x)& 0\\
0 & U_n(x)+ U_{n-1}(x)
\end{bmatrix}\\=
\begin{bmatrix}
U_n(x)- U_{n-1}(x)& 0\\
0 & U_n(x)+ U_{n-1}(x)
\end{bmatrix}\begin{bmatrix}
a_1 & n+\frac{1}{2} \\
n+\frac{1}{2}  & a_2
\end{bmatrix}
\end{multline}
where $a_1,a_2\in\R$ are arbitrary constants. Notice also that this matrix equation is invariant under multiplication on the right and on the left hand sides by arbitrary diagonal matrices $L=\diag(l_1,l_2)$ and $R=\diag(r_1,r_2)$,
\begin{multline*}
\begin{bmatrix}
l_1& 0\\
0 & l_2
\end{bmatrix}\bigg(\begin{bmatrix}
0 &x-1 \\
x+1 & 0
\end{bmatrix}\frac{\d }{\d x}+\begin{bmatrix}
a_1& \frac{1}{2} \\
\frac{1}{2}  &a_2
\end{bmatrix}\bigg)\begin{bmatrix}
r_1& 0\\
0 & r_2
\end{bmatrix}\begin{bmatrix}
U_n(x)- U_{n-1}(x)& 0\\
0 & U_n(x)+ U_{n-1}(x)
\end{bmatrix}\\=
\begin{bmatrix}
U_n(x)- U_{n-1}(x)& 0\\
0 & U_n(x)+ U_{n-1}(x)
\end{bmatrix}\begin{bmatrix}
l_1& 0\\
0 &l_2
\end{bmatrix}\begin{bmatrix}
a & n+\frac{1}{2} \\
n+\frac{1}{2} & b
\end{bmatrix}\begin{bmatrix}
r_1& 0\\
0 & r_2
\end{bmatrix}.
\end{multline*}
After the similarity transformation we find out that the orthogonal polynomial $\tilde{\hat{P}}_n$ satisfies
\begin{align*}
\Big(A_1(x)\frac{\d}{\d x}+A_0\Big)\tilde{\hat{P}}_n(x)=\tilde{\hat{P}}_n(x) \Lambda_n,
\end{align*}
where
\begin{align*}
A_1(x)&=\begin{bmatrix}
-\delta_+x+\delta_- & \delta_-x-\delta_+\\
-\delta_-x+\delta_+ & \delta_+x-\delta_-
\end{bmatrix},&
\Lambda_n&=\begin{bmatrix}
A_+-\delta_+\Big(n+\frac{1}{2}\Big) &A_-+\delta_-\Big(n+\frac{1}{2}\Big)\\
A_--\delta_-\Big(n+\frac{1}{2}\Big) & A_++\delta_+\Big(n+\frac{1}{2}\Big)
\end{bmatrix},
\end{align*}
and $A_0=\Lambda_{n=0}$
with
\begin{align*}
\delta_{\pm}&=l_1r_2\pm l_2r_1, &
A_\pm&=l_1r_1a_1\pm l_2r_2a_2.
\end{align*}
Equations (3.1) and (3.2) of \S 3 of \phantomsection\cite{Cas1} can be recovered choosing $\Big(\delta_+=A_+=0,\delta_-=1,A_-=-\frac{1}{2}\Big)$
and $\Big(\delta_-=A_-=0,\delta_+=-1,A_+=\frac{1}{2}\Big)$, respectively.

However, they are all equivalent to \eqref{eq:diff_chesybev_matrix}, another form of writing \eqref{eq:chebysev}.
It is in fact this last equation  \eqref{eq:chebysev} a quite interesting  one. Indeed, we have two families of Darboux transformed orthogonal polynomials interconnected by two first order differential equations.
Moreover, we conclude
\begin{align*}
\Big((x^2-1)\frac{\d^2}{\d x^2}+(2x\mp 1)\frac{\d}{\d x}+\frac{1}{4}\Big)\big(U_n(x)\mp U_{n-1}(x)\big)=\Big(n+\frac{1}{2}\Big)^2\big(U_n(x)\mp U_{n-1}(x)\big).
\end{align*}

\paragraph{\textbf{Example 3}} Here we comment on the matrix Gegenbauer matrix valued polynomials  discussed in  \cite{koelink1}. In this case the matrix of weights is a symmetric matrix, $W^{(\nu)}:[-1,1]\to\R^{N\times N}$, with matrix coefficients  of the form 
\begin{align*}
(W^{(\nu)}(x))_{i,j}&:=(1-x^2)^{\nu-1/2}\sum_{k=\operatorname{max}(0,i+j+1-N)}\alpha^{(\nu)}_k(i,j)C^{(\nu)}_{i+j-2k}(x), & i\geq j,
\end{align*}
where  $\alpha^{(\nu)}_t(i,j)$ are some coefficients and $C^{(\nu)}_n(x)$ stands for the Gegenbauer or ultraspherical polynomials. Erik Koelink and Pablo Román kindly communicated us a nice feature of the matrix Christoffel transformation discussed in this paper when acting on this reach family of MOPS: two families of Gegenbauer MOPS  associated with matrices of weights $W^{(\nu_1)}(x)$ and $W^{(\nu_2)}(x)$,
such that $\nu_1-\nu_2=m\in\Z$, are linked by a matrix Christoffel transformation. Now, the perturbing polynomial $W(x)$ has $\deg W=2m$. These examples are, in general, reducible to two irreducible blocks of sizes $N/2$, for $N$ even, and $(N+1)/2$ and $(N-1)/2$ for odd $N$. For a  discussion on the orthogonal and non orthogonal reducibility of these examples see \cite{koelink1,koelink2}.

\section{Singular leading coefficient  matrix polynomial perturbations}

After studying some examples that the literature provides us with, one may realize that, even thought it is generic to assume the perturbing matrix polynomial ${W}(x)$ to have a nonsingular leading coefficient, many examples do have a singular matrix as its leading coefficient.
This situation is a special feature of the matrix case setting since in the scalar case,
having a singular leading term would mean that this coefficient is just zero (affecting, of course, to the degree of the polynomial).
For this reason, when dealing with this kind of matrix polynomials talking about their degree should make no sense.
The effect that this fact has on our reasoning is that since $\deg[\det {W}(x)]\leq Np$ the information encoded in the zeroes
(and corresponding adapted polynomials) of $\det {W}(x)$ is no longer enough to make the matrices $\Pi_{kN}$ of the needed size.
Therefore, there will be no way to express the perturbed polynomials just in terms of the initial ones evaluated at the zeroes of
$\det {W}(x)$ and the method to find a Christoffel type formula fails. However, the information that seems to be missing in these cases may actually not be necessary due to
the singular character of the leading coefficient of the perturbing polynomial. Let us consider the following  example to take a glimpse of this scenario.

Let us pick up some scalar measure $\d \mu(x)$ and its associated monic OPs $\big\{p_k(x)\big\}_{k=0}^\infty$ together with
their norms and three term recurrence relation
\begin{align*}
h_k \delta_{kj}&:=\langle p_k,p_j \rangle,  &
xp_k(x)&=J_{k,k-1}p_{k-1}(x)+J_{k,k}p_k(x)+p_{k+1}(x), &
J_{k,k-1}&=\frac{h_k}{h_{k-1}}>0.
\end{align*}
Now, consider its $2q\times 2q$ matrix diagonal extension $\in \mathbb{R}^{2q\times 2q}[x]$
\begin{align*}
P_k(x)&:=p_k(x) I_{2q},  &  H_k&:=h_k I_{2q}.
\end{align*}
Our aim is to consider the following matrix polynomial (with singular leading coefficient)
\begin{align*}
{W}(x)&:=\begin{bmatrix}
I_q+AA^\top x^2 & Ax \\
A^{\top}x     & I_q
\end{bmatrix}, &A\in\R^{q \times q},
\end{align*}
which is inspired by the $q=1$ case
$\begin{bsmallmatrix}
1+a^2x^2 & ax \\
ax     & 1
\end{bsmallmatrix} $  (see \phantomsection\cite{Dur6}, and references therein)
and study the corresponding perturbations of our initial scalar measure; i.e.,
$\d \hat{\mu}(x):={W}(x)\d \mu(x)$
in order to obtain the transformed matrix orthogonal polynomials
\begin{align*}
\hat{P}_{k}(x)&:=\begin{bmatrix}
(\hat{P}_{k})_{1,1} & (\hat{P}_{k})_{1,2} \\
(\hat{P}_{k})_{2,1} & (\hat{P}_{k})_{2,2}
\end{bmatrix},  &
&\hat{P}_{k}(x)\in \mathbb{R}^{2q\times 2q}[x], (\hat{P}_{k})_{i,j}\in \mathbb{R}^{q\times q}[x], \\
\langle \hat{P}_{k},\hat{P}_{j} \rangle_{W}&:=
\delta_{k,j}\hat{H}_{k}=\delta_{k,j}\begin{bmatrix}
(\hat{H}_{k})_{1,1} & (\hat{H}_{k})_{1,2} \\
(\hat{H}_{k})_{2,1} & (\hat{H}_{k})_{2,2}
\end{bmatrix}, &
&\hat{H}_{k}\in \mathbb{R}^{2q\times 2q},(\hat{H}_{k})_{i,j}\in \mathbb{R}^{q\times q}.
\end{align*}
We have splitted them up this way for computational purposes.
Notice that since ${W}(x)={W}(x)^{\top}$ we have
$\hat{M}=\hat{M}^{\top}:=\hat{S}^{-1}\hat{H}[\hat{S}^{-1}]^{\top}$ and, therefore,
$\hat{P}^{[1]}=\hat{P}^{[2]}:=\hat{P}$ and $\hat{H}_{k}=(\hat{H}_{k})^{\top}$.

Let us point out that
\begin{align*}
{W}(x)&=\mathcal W(x)\mathcal W(x)^{\top}, &
\mathcal W&:=\begin{bmatrix}
I_q & Ax \\
0 & I_q
\end{bmatrix}, &
\mathcal W^{-1}=\begin{bmatrix}
I_q & -Ax \\
0 & I_q
\end{bmatrix}.
\end{align*}
This implies that $\det W=\det \mathcal W=1$ and, consequently, there is no spectral analysis to perform as there are non eigenvalues at all.
Thus, the relation between the original  and perturbed measures and moment matrices is
\begin{align*}
[\mathcal W(x)]^{-1} \d \hat{\mu} &= \d \mu [\mathcal W(x)]^{\top}, &
[\mathcal W(\Lambda)]^{-1} \hat{M} &= M [\mathcal W (\Lambda)]^{\top}.
\end{align*}
\begin{defi}\label{def:omega_duran}
	We introduce the resolvent or connection matrix
	\begin{align*}
	\omega:=\hat{S}\mathcal W(\Lambda)S^{-1}.
	\end{align*}
\end{defi}
\begin{pro}\label{pro:omega_duran}
	The matrix $\omega$ is block tridiagonal, having only its diagonal and first superdiagonal and subdiagonal nonzero, and satisfies
	\begin{align*}
	\omega^{-1}&=H [\omega]^{\top} \hat{H}^{-1}.
	\end{align*}
	Moreover, we have the important connection formula
	\begin{align*}
	\omega P=\hat{P} \mathcal W(x).
	\end{align*}
\end{pro}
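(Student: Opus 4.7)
The plan is to dispatch the three claims in order of increasing difficulty: connection formula, duality relation, and finally the tridiagonal structure. For the connection formula, I would insert $P=S\chi(x)$ into $\omega P=\hat S\,\mathcal W(\Lambda)\,S^{-1}P$, which leaves me needing the intertwining identity $\mathcal W(\Lambda)\chi(x)=\chi(x)\mathcal W(x)$. This identity is immediate from $\Lambda\chi(x)=x\chi(x)$ together with the block upper bidiagonal shape of $\mathcal W(\Lambda)$, whose only non-zero blocks are $I_{2q}$ on the main diagonal and $A':=\bigl[\begin{smallmatrix}0&A\\0&0\end{smallmatrix}\bigr]$ on the first block superdiagonal. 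The formula then reads $\omega P=\hat S\,\chi(x)\mathcal W(x)=\hat P\,\mathcal W(x)$.

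Next I would prove the duality relation. Plugging the Gauss--Borel factorizations $M=S^{-1}HS^{-\top}$ and $\hat M=\hat S^{-1}\hat H\hat S^{-\top}$ into the moment identity $\hat M=\mathcal W(\Lambda)\,M\,\mathcal W(\Lambda)^\top$ (which itself follows from $\d\hat\mu=\mathcal W(x)\,\d\mu\,\mathcal W(x)^\top$ by applying the intertwining identity on both sides of $\chi\,\d\mu\,\chi^\top$), and then left-multiplying by $\hat S$ and right-multiplying by $\hat S^\top$, collapses the identity to $\hat H=\omega H\omega^\top$. Since $H$ and $\hat H$ are nonsingular block diagonal, this is just a rewriting of the asserted $\omega^{-1}=H\omega^\top\hat H^{-1}$.

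The main obstacle will be the block tridiagonal structure. My strategy is to exploit the remarkable fact that both $\mathcal W(x)$ \emph{and} its inverse $\mathcal W(x)^{-1}=I_{2q}-A'x$ are matrix polynomials of degree one, so that $\mathcal W(\Lambda)$ and $\mathcal W(\Lambda)^{-1}$ share the very same block upper bidiagonal sparsity pattern (main block diagonal plus first block superdiagonal only). A direct block-entry inspection, using that $\hat S$ and $S^{-1}$ are lower unitriangular, shows that $\omega=\hat S\,\mathcal W(\Lambda)\,S^{-1}$ has vanishing blocks strictly above the first block superdiagonal, i.e., $\omega$ is block lower Hessenberg. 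Applying exactly the same reasoning to $\omega^{-1}=S\,\mathcal W(\Lambda)^{-1}\hat S^{-1}$ shows that $\omega^{-1}$ is \emph{also} block lower Hessenberg. Invoking the duality relation from the previous step and the block diagonality of $H,\hat H$, the lower Hessenberg shape of $\omega^{-1}$ transports through $\omega^\top=H^{-1}\omega^{-1}\hat H$ to a lower Hessenberg shape of $\omega^\top$, and hence to an upper Hessenberg shape of $\omega$. The two Hessenberg conditions together pin $\omega$ down to block tridiagonal.
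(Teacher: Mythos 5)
Your proposal is correct and follows the same route the paper intends: the connection formula from the definition of $\omega$ via the intertwining identity $\mathcal W(\Lambda)\chi(x)=\chi(x)\mathcal W(x)$, the duality relation by substituting the two Gauss--Borel factorizations into $\hat M=\mathcal W(\Lambda)M\mathcal W(\Lambda)^\top$, and tridiagonality by combining the lower Hessenberg shape of $\omega=\hat S\,\mathcal W(\Lambda)\,S^{-1}$ with that of $\omega^{-1}=S\,\mathcal W(\Lambda)^{-1}\hat S^{-1}$ transported through $\omega^\top=H^{-1}\omega^{-1}\hat H$. The paper's own proof is only a two-line sketch, so your write-up supplies exactly the details (in particular the observation that $\mathcal W(\Lambda)^{-1}=I-A'\Lambda$ has the same bidiagonal sparsity) that the authors leave implicit.
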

\begin{proof}
	The first relation is a consequence of the $LU$ factorization of the moment matrices and the connection formula is a straightforward consequence of the  definition of $\omega$.
\end{proof}

\begin{pro}
\begin{enumerate}
	\item The matrices
	\begin{align*}
	\rho_{k+1}&:=\left(I_q+J_{k+1,k}A^{\top}A\right)^{-1}, & k&\in\{-1,0,1,\dots \},
	\end{align*}
	exist.
	\item
	The perturbed MOPS can be writen in terms of the original OPS as follows
	\begin{align*}
	\hat{P}_{1,k+1}(x)\mathcal W(x)&=-\begin{bmatrix}
	J_{k+1,k}J_{k+1,k+1}A \rho_{k+1} A^{\top} & 0 \\
	J_{k+1,k}\rho_{k+1} A^{\top}  & 0
	\end{bmatrix} p_{k}(x)
	+\begin{bmatrix}
	I_{q} & J_{k+1,k+1}A\rho_{k+1} \\
	0 & \rho_{k+1}
	\end{bmatrix}p_{k+1}(x)+
	\begin{bmatrix}
	0 & A \\
	0 & 0
	\end{bmatrix}p_{k+2}(x),
	\end{align*}
	for $k\in\{-1,0,1,\dots \}.$
	\end{enumerate}
\end{pro}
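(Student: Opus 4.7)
The plan is to exploit the connection formula $\omega P = \hat P\,\mathcal W$ from Proposition \ref{pro:omega_duran}, together with the block-tridiagonal structure of $\omega$. Since $P_j(x) = p_j(x) I_{2q}$, the $(k+1)$-th block row reads
\begin{align*}
\hat P_{k+1}(x)\,\mathcal W(x) = \omega_{k+1,k}\,p_k(x) + \omega_{k+1,k+1}\,p_{k+1}(x) + \omega_{k+1,k+2}\,p_{k+2}(x),
\end{align*}
so the task reduces to identifying these three blocks of $\omega$. Existence of $\rho_{k+1}$ is immediate: $A^\top A$ is positive semidefinite and $J_{k+1,k} = h_{k+1}/h_k > 0$, hence $I_q + J_{k+1,k} A^\top A$ is positive definite, and therefore invertible.

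Writing $N := \begin{bmatrix} 0 & A \\ 0 & 0 \end{bmatrix}$ so that $\mathcal W(x) = I_{2q} + Nx$, matching the $x^{k+2}$-coefficient (using monicity of $\hat P_{k+1}$ and $p_{k+2}$) gives $\omega_{k+1,k+2} = N$. For the remaining two blocks I would combine two ingredients. The first is bi-orthogonality $\int \hat P_{k+1}(x)\,W(x)\,p_j(x)\,d\mu(x) = 0$ for $0 \leq j \leq k$, rewritten using $W = \mathcal W\mathcal W^\top$. Substituting the three-term expansion above, decomposing the resulting $2q \times 2q$ integral into $q \times q$ blocks via $\mathcal W^\top = \begin{bmatrix} I_q & 0 \\ A^\top x & I_q \end{bmatrix}$, and combining scalar orthogonality with the TTRR $xp_m = J_{m,m-1} p_{m-1} + J_{m,m} p_m + p_{m+1}$ to evaluate $\int p_m\,x p_j\,d\mu$, all $j < k$ conditions are automatic; the $j = k$ conditions force the second block column of $\omega_{k+1,k}$ to vanish and tie its first block column to the second block column of $\omega_{k+1,k+1}$ right-multiplied by $-J_{k+1,k} A^\top$.

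The second ingredient is the monicity of $\hat P_{k+1}$. I would first apply the TTRR to replace $p_{k+2}$ by $xp_{k+1} - J_{k+1,k+1} p_{k+1} - J_{k+1,k} p_k$, rewriting
\begin{align*}
\hat P_{k+1}\,\mathcal W = (\omega_{k+1,k} - J_{k+1,k} N)\,p_k + (\omega_{k+1,k+1} - J_{k+1,k+1} N)\,p_{k+1} + N\,x\,p_{k+1},
\end{align*}
and then right-multiply by $\mathcal W(x)^{-1} = I_{2q} - Nx$, which is polynomial thanks to $N^2 = 0$. The vanishing of the $x^{k+2}$-coefficient in $\hat P_{k+1}$ is equivalent to $\omega_{k+1,k+1} N = N$, while applying the TTRR once more to the surviving $xp_k$ term and equating the $p_{k+1}$-coefficient of the resulting expansion to $I_{2q}$ produces the identity
\begin{align*}
\omega_{k+1,k+1} - \omega_{k+1,k} N - J_{k+1,k+1} N = I_{2q}.
\end{align*}
Expanding this block by block and substituting the first-column relations from the orthogonality step collapses the non-trivial content to two $q\times q$ equations, namely $b_{22}(I_q + J_{k+1,k} A^\top A) = I_q$ and $b_{12}(I_q + J_{k+1,k} A^\top A) = J_{k+1,k+1} A$ for the $(2,2)$ and $(1,2)$ blocks of $\omega_{k+1,k+1}$, solved by $\rho_{k+1}$ and $J_{k+1,k+1} A\rho_{k+1}$ respectively; back-substitution then recovers the full form of $\omega_{k+1,k}$ given in the statement.

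The main obstacle is bookkeeping rather than anything conceptual: one must track precisely which scalar moments survive in the block orthogonality conditions so that only the $j = k$ equations carry non-trivial information, and repeatedly invoke $N^2 = 0$ to keep the algebra closed. Once this is carried out, the entire system collapses onto the invertibility of $I_q + J_{k+1,k} A^\top A$, which is exactly what the first claim asserts.
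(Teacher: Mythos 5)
Your proof is correct, and it reaches the formula by a genuinely different route from the paper's, although both share the same skeleton: the connection formula $\omega P=\hat P\,\mathcal W$, the block tridiagonality of $\omega$ from Proposition \ref{pro:omega_duran}, and the identification $\omega_{k+1,k+2}=N$ with $N=\begin{bsmallmatrix}0&A\\0&0\end{bsmallmatrix}$. The paper then uses the symmetry relation $\omega^{-1}=H\,\omega^{\top}\hat H^{-1}$ to read off the subdiagonal block as $\omega_{k+1,k}=-\hat H_{k+1}N^{\top}h_k^{-1}$, expresses $\omega_{k+1,k+1}$ through degree counting in terms of the still unknown norm $\hat H_{k+1}$, and closes the system by evaluating the moment integral $\int\hat P_{k+1}\,\d\hat\mu\,x^{k+1}=\hat H_{k+1}$, which delivers $(\hat H_{k+1})_{12}$, $(\hat H_{k+1})_{22}$ and hence $\rho_{k+1}$. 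You bypass both the symmetry relation and the perturbed norms: the $j=k$ orthogonality condition gives $\omega_{k+1,k}=-J_{k+1,k}\,\omega_{k+1,k+1}N^{\top}$ (and, since $(N^{\top})^{2}=0$, it automatically forces $\omega_{k+1,k}N^{\top}=0$, so the borderline $j=k-1$ condition is indeed implied rather than merely vacuous), while monicity supplies $\omega_{k+1,k+1}N=N$ and $\omega_{k+1,k+1}-\omega_{k+1,k}N-J_{k+1,k+1}N=I_{2q}$; substituting the first relation into the second collapses everything onto $b_{22}(I_q+J_{k+1,k}A^{\top}A)=I_q$ and $b_{12}(I_q+J_{k+1,k}A^{\top}A)=J_{k+1,k+1}A$, exactly as you say, and back-substitution reproduces the stated $\omega_{k+1,k}$. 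Both routes hinge on the same invertibility, which your positive-definiteness argument ($A^{\top}A\geq 0$ and $J_{k+1,k}=h_{k+1}/h_k>0$) settles for part (i). What the paper's route buys is the explicit perturbed norms as a by-product; what yours buys is that no integral beyond scalar orthogonality and the three term recurrence relation is ever computed.
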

\begin{proof}
	From the $(k+1)$-th row of the connection formula we have that
	\begin{align*}
	\omega_{k+1,k}p_k(x)+\omega_{k+1,k+1}p_{k+1}(x)+\omega_{k+1,k+2}p_{k+2}(x)=\hat{P}_{k+1}(x)\mathcal{W}(x),
	\end{align*}
	but from the Definition \ref{def:omega_duran} and Proposition \ref{pro:omega_duran} one realizes that the previous expression reads
	\begin{align*}
	\hat{P}_{k+1}(x)\mathcal{W}(x)=	\hat{H}_{k+1}\begin{bmatrix}
	0 & -A \\
	0 & 0
	\end{bmatrix}^{\top} h_{k}^{-1}p_k(x)+
	\begin{bmatrix}
	(\omega_{k+1,k+2})_{11} & (\omega_{k+1,k+2})_{12} \\
	(\omega_{k+1,k+2})_{21} & (\omega_{k+1,k+2})_{22}
	\end{bmatrix}p_{k+1}(x)+
	\begin{bmatrix}
	0 & A \\
	0 & 0
	\end{bmatrix}p_{k+2}(x).
	\end{align*}
	Now, taking into account that both $(\hat{P}_{k+1})_{11},(\hat{P}_{k+1})_{22}$ are monic $q\times q$ polynomials of degree $k+1$, while
	$(\hat{P}_{k+1})_{12},(\hat{P}_{k+1})_{21}$ are $q\times q$ polynomials of degree less than $k+1$, it is not hard to see
	(after using the three term recurrence relation of the initial polynomials) that
	\begin{align*}
	(\omega_{k+1,k+2})_{11}&=I_q, & (\omega_{k+1,k+2})_{12}&=J_{k+1,k+1}A-h_{k}^{-1}(\hat{H}_{k+1})_{12}A^{\top}A, \\
	(\omega_{k+1,k+2})_{21}&=0, & (\omega_{k+1,k+2})_{22}&=I_q-h_{k}^{-1}(\hat{H}_{k+1})_{22}A^{\top}A.
	\end{align*}
	Hence, we have every coefficient that appears in the connection formula in terms of the still unknown norms of the MOPS.
	Therefore, we just need to compute the second block column of the following integral
	\begin{multline*}
	\int \left[\omega_{k+1,k}p_k(x)+\omega_{k+1,k+1}p_{k+1}(x)+\omega_{k+1,k+2}p_{k+2}(x)\right]\left[(\mathcal{W}(x))^{\top}x^{k+1}\right] \d \mu(x) \\\begin{aligned}
	&=\int \hat{P}_{k+1}(x)\mathcal{W}(x)\left[(\mathcal{W}(x))^{\top} x^{k+1}\right] \d \mu(x)\\&= \int \hat{P}_{k+1}(x) \d \hat{\mu}(x) x^{k+1} \\&= \hat{H}_{k+1},
	\end{aligned}
	\end{multline*}
	which yields
	\begin{align*}
	(\hat{H}_{k+1})_{12}&=J_{k+1,k+1}h_{k+1}A \left(I_q+J_{k+1,k}A^{\top}A\right)^{-1} ,
	&  (\hat{H}_{k+1})_{22}&=h_{k+1}\left(I_q+J_{k+1,k}A^{\top}A\right)^{-1}.
	\end{align*}
	Therefore,
	\begin{align*}
	(\omega_{k+1,k+2})_{12}&=J_{k+1,k+1}A \left(I_q+J_{k+1,k}A^{\top}A\right)^{-1}, &
	(\omega_{k+1,k+2})_{22}&=\left(I_q+J_{k+1,k}A^{\top}A\right)^{-1},
	\end{align*}
	and the stated result follows.
\end{proof}
For $q=1$ and the classical measures we have, see \cite{duran-lopez},
\begin{coro} Starting from the classical measures
	\begin{enumerate}
		\item Hermite monic polynomials $\big\{\mathcal{H}_k(x)\big\}_{k=0}^\infty$ with norm $h_k=\pi^{\frac{1}{2}}\frac{k!}{2^k}$
		\begin{align*}
		J_{k+1,k}&=\frac{k+1}{2}, & J_{k+1,k+1}&=0,    & \rho_{k+1}:=\frac{2}{2+a^2(k+1)}.
		\end{align*}
		\item Laguerre monic polynomials $\big\{\mathcal{L}^{\alpha}_k(x)\big\}_{k=0}^\infty$ with norm $h_k=k!\Gamma(k+1+\alpha)$
		\begin{align*}
		J_{k+1,k}&=(k+1)(k+\alpha+2),  &  J_{k+1,k+1}&=(2k+\alpha+3), & \rho_{k+1}:=\frac{1}{1+a^2(k+1)(k+1+\alpha)}.
		\end{align*}
	\end{enumerate}
	and perturbing them by the matrix polynomial
	\begin{align*}
	{W}(x)&=\mathcal{W}(x)\left(\mathcal{W}(x)\right)^{\top}, &  \mathcal{W}(x)=\begin{pmatrix}
	1 &  ax \\
	0 &   1
	\end{pmatrix},
	\end{align*}
	one obtains the  perturbed MOPS related to the classical OPS as  follows
	\begin{align*}
	\hat{\mathcal{H}}_{k+1}(x)\mathcal{W}(x)&=\begin{pmatrix}
	0 & 0 \\
	\frac{\rho_{k+1}-1}{a} & 0
	\end{pmatrix} \mathcal{H}_k(x)+
	\begin{pmatrix}
	1 & 0 \\
	0 & \rho_{k+1}
	\end{pmatrix}\mathcal{H}_{k+1}(x)+
	\begin{pmatrix}
	0 & a \\
	0 & 0
	\end{pmatrix}\mathcal{H}_{k+2}(x), \\
	\hat{\mathcal{L}}^{\alpha}_{k+1}(x)\mathcal{W}(x)&=\begin{pmatrix}
	(\rho_{k+1}-1)(2k+3+\alpha) & 0 \\
	\frac{\rho_{k+1}-1}{a} & 0
	\end{pmatrix}\mathcal{L}^{\alpha}_k(x)+
	\begin{pmatrix}
	1 & a(2k+3+\alpha)\rho_{k+1} \\
	0 & \rho_{k+1}
	\end{pmatrix}\mathcal{L}^{\alpha}_{k+1}(x)+
	\begin{pmatrix}
	0 & a \\
	0 & 0
	\end{pmatrix}\mathcal{L}^{\alpha}_{k+2}(x).
	\end{align*}
\end{coro}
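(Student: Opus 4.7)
The proof is a direct specialization of the preceding Proposition to the scalar case $q=1$, $A=a\in\R$. In this setting $AA^\top=A^\top A=a^2$ and the $2q\times 2q$ block matrices in the Proposition's formula collapse to ordinary $2\times 2$ scalar matrices. The only algebraic identity needed is the immediate consequence of the definition $\rho_{k+1}=(1+J_{k+1,k}a^2)^{-1}$,
\begin{align*}
\rho_{k+1}-1=-J_{k+1,k}\,a^2\,\rho_{k+1},\qquad\text{equivalently}\qquad \frac{\rho_{k+1}-1}{a}=-J_{k+1,k}\,a\,\rho_{k+1},
\end{align*}
which is the rewriting that converts the coefficients appearing in the Proposition into the compact form stated in the Corollary.

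For the Hermite case I would insert the classical values $J_{k+1,k}=(k+1)/2$ and $J_{k+1,k+1}=0$. The vanishing of $J_{k+1,k+1}$ instantly kills the $(1,1)$ entry of the coefficient of $p_k$ as well as the $(1,2)$ entry of the coefficient of $p_{k+1}$; the $(2,1)$ entry of the coefficient of $p_k$ equals $-J_{k+1,k}\rho_{k+1}a$, which by the identity above is precisely $(\rho_{k+1}-1)/a$. The remaining entries together with the unchanged $p_{k+2}$-coefficient $\begin{bmatrix}0&a\\0&0\end{bmatrix}$ reassemble into the stated formula for $\hat{\mathcal{H}}_{k+1}(x)\mathcal{W}(x)$, and the listed form of $\rho_{k+1}=2/(2+a^2(k+1))$ is obtained by plugging $J_{k+1,k}=(k+1)/2$ into the definition.

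The Laguerre case is handled in exactly the same way with $J_{k+1,k}=(k+1)(k+\alpha+2)$, $J_{k+1,k+1}=2k+3+\alpha$, and the corresponding $\rho_{k+1}=1/(1+a^2(k+1)(k+1+\alpha))$. Since now $J_{k+1,k+1}\neq 0$, the $(1,1)$ entry of the coefficient of $p_k$ is $-J_{k+1,k}J_{k+1,k+1}a^2\rho_{k+1}$, which by the identity multiplied by $J_{k+1,k+1}$ equals $(\rho_{k+1}-1)(2k+3+\alpha)$; the $(1,2)$ entry of the coefficient of $p_{k+1}$ becomes $J_{k+1,k+1}\,a\,\rho_{k+1}=a(2k+3+\alpha)\rho_{k+1}$; and the $(2,1)$ entry of the coefficient of $p_k$ is again $(\rho_{k+1}-1)/a$ by the key identity. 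No substantial obstacle arises: the whole argument is bookkeeping once the Proposition is granted, and the identity relating $\rho_{k+1}-1$ to $J_{k+1,k}a^2\rho_{k+1}$ is the only genuine algebraic rearrangement involved.
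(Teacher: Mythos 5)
Your proposal is correct and coincides with the paper's own (implicit) argument: the corollary is obtained exactly by setting $q=1$, $A=a$ in the preceding Proposition and rewriting the coefficients via $\rho_{k+1}-1=-J_{k+1,k}a^{2}\rho_{k+1}$, which is precisely the bookkeeping you carry out. The only caveat, inherited from the statement itself rather than introduced by you, is the internal inconsistency in the Laguerre data, where $J_{k+1,k}=h_{k+1}/h_k=(k+1)(k+1+\alpha)$ (matching the displayed $\rho_{k+1}$) rather than $(k+1)(k+\alpha+2)$.
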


\section{Extension to non-Abelian 2D Toda hierarchies}
Matrix orthogonal polynomials are connected with non-Abelian Toda lattices, see \phantomsection\cite{mir,amu}.
\subsection{Block Hankel moment matrices vs multi-component Toda hierarchies}
Let us take $M=(m_{i,j})_{i,j=0}^\infty$, $m_{i,j}\in\R^{p\times p}$  a semi-infinite block matrix having  a Gaussian factorization
\begin{align*}
M=(S_1)^{-1} H (S_2)^{-\top},
\end{align*}
where $S_1,S_2$ are lower uni-triangular block matrices and $H$ is  block diagonal. Notice that conditions for this factorization to hold were given in Proposition \ref{qd1}.

\begin{defi}
	 We  introduce some continuous  flows or perturbations of this semi-infinite matrix. For that aim we
first  consider the diagonal matrices
\begin{align*}
t_{i,j}=&\diag(t_{i,j,1},\dots,t_{i,j,p})\in\R^{p\times p}, & i&=1,2, &j&\in\Z_+,
\end{align*}
the semi-infinite undressed wave matrices
\begin{align*}
V_i^{(0)}&:=\exp\Big(\sum_{j=0}^\infty t_{i,j}\Lambda^j\Big), & i&=1,2,
\end{align*}
and the perturbed matrix $M(t)$, $t=(t_1,t_2)$, $t_i=\{t_{i,j,a}\}_{\substack{j\in\Z_+\\ a\in\{1,\dots,p\}}}$
\begin{align*}
M(t)=& V_1^{(0)}(t_1)M\big(V_2^{(0)}(t_2)\big)^{-\top}.
\end{align*}
\end{defi}
Observe that we do not require any Hankel form for the matrix $M$, modelled by  $\Lambda M=M\Lambda ^\top$.
However, if $M(0)$ is a Hankel matrix  $M(t)$ is also a Hankel matrix taking into account $\Lambda M(t)= M(t) \Lambda ^{\top}$. Hence, if $\d\mu(x)$ is the initial matrix of measures, then the new matrix of measures $\d\mu(x,t)$ will be
\begin{align*}
\d\mu(x,t)=\exp\Big(\sum_{j=0}^\infty  t_{1,j}x^j\Big)\d\mu(x)\exp\Big(-\sum_{j=0}^\infty t_{2,j}x^j\Big).
\end{align*}
Here $M(t)$ will be the moment matrix of the matrix of measures.
Moreover, if at any time the matrix of measures is block Hankel then it was and it will be a  Hankel block matrix at any time.
If we assume that we can perform the Gaussian factorization again we can write
\begin{align*}
M(t)&=(S_1(t))^{-1} H(t) (S_2(t))^{-\top}.
\end{align*}
As we know, for the block Hankel case we are dealing with bi-orthogonal or orthogonal polynomials with respect to the associated matrix of measures. What happens in the general case?
Following \phantomsection\cite{adler-van moerbeke}
and \phantomsection\cite{manas0} we can understand the Gaussian factorization also as a bi-orthogonality condition.
The semi-infinite vectors of polynomials will be
\begin{align*}
P^{[1]}(x)&:=S_1(t)\chi(x), & P^{[2]}(x)&:= S_2(t)\chi(x),
\end{align*}
and we consider a sesquilinear form in $\R^{p\times p}[x]$, see \S \ref{S:mop}, that for any couple of matrix polynomials $P=\sum_{k=0}^{\deg P}p_kx^k$ and $Q(x)=\sum_{l=0}^{\deg Q} q_lx^l$ is defined by
\begin{align*}
\prodint{P(x),Q(x)}=\sum_{\substack{k=1,\dots,\deg P\\
		l=1,\dots,\deg Q}}p_k M_{k,l}(q_l)^\top,
\end{align*}
where we can interpret
\begin{align*}
M_{k,l}=\prodint{x^k 1_p,x^l 1_p}
\end{align*}
as the Gram matrix of the sesquilinear form.
With respect to this sesquilinear form we have the bi-orthogonality condition
\begin{align*}
\prodint{P_k^{[1]}(x),P_l^{[2]}(x)}=H_k\delta_{k,l}.
\end{align*}

For a block Hankel initial condition  this sesquilinear form is just the sequilinear product associated with a linear functional of a measure. In \phantomsection\cite{manas3} different examples are discussed for the matrix orthogonal polynomials scenario. For example, multigraded Hankel matrices $M$ fulfilling
\begin{align*}
\Big(\sum_{a=1}^p \Lambda^{n_a}E_{a,a}\Big)M=M\Big(\sum_{a=1}^p \big(\Lambda^\top\big)^{m_a}E_{a,a}\Big),
\end{align*}
where $n_1,\dots,n_p,m_1,\dots,m_p$ are positive integers,
can be realized as
\begin{align*}
M_{k,l}=\int x^k\d\mu^{(l)}(x)
\end{align*}
in terms of  matrices of measures $\d\mu^{(l)}(x)$ which satisfy the following periodicity condition
\begin{align}\label{eq: matrix of measures periodicity}
\d\mu^{(l+m_a)}_{a,b}(x)=x^{n_a}\d\mu^{(l)}_{a,b}(x).
\end{align}
Therefore, given the measures $\d\mu^{(0)}_{a,b},\dots,\d\mu^{(m_b-1)}_{a,b}$ we can recover all the others from \eqref{eq: matrix of measures periodicity}.
	In this case, we have generalized orthogonality conditions like
\begin{align*}
\int P^{[1]}_k(x) \d \mu^{(l)}(x) &=0, & l=0,\dots, k-1.
\end{align*}
	
Coming back to the Gaussian factorization, we consider the wave matrices
\begin{align*}
V_1(t):=&S_1(t)V_1^{(0)}(t_1), \\
\tilde V_2(t):=&\tilde S_2(t)  (V^{(0)}_2(t_2))^{\top},
\end{align*}
where $\tilde S_2(t):=H(t)(S_2(t))^{-\top}$.

\begin{pro}
	The wave matrices satisfy
	\begin{align}\label{eq:central}
\big(V_1(t)\big)^{-1}\tilde V_2(t)=M.
\end{align}
\end{pro}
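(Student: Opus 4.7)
The plan is to unwind the definitions and chain together the two factorizations that we already have in hand. The identity is essentially a bookkeeping statement: the dressed wave matrices encode both the Gaussian factorization of $M(t)$ and the undressed exponential flows, and multiplying them in the indicated order must cancel everything time-dependent and leave the initial datum $M$.

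First I would substitute the definitions of $V_1(t)$ and $\tilde V_2(t)$ to obtain
\begin{align*}
\bigl(V_1(t)\bigr)^{-1}\tilde V_2(t)
&=\bigl(V_1^{(0)}(t_1)\bigr)^{-1}\bigl(S_1(t)\bigr)^{-1}\tilde S_2(t)\bigl(V_2^{(0)}(t_2)\bigr)^{\top}.
\end{align*}
Next I would use the definition $\tilde S_2(t)=H(t)(S_2(t))^{-\top}$ to rewrite the middle factor as
\begin{align*}
\bigl(S_1(t)\bigr)^{-1}\tilde S_2(t)=\bigl(S_1(t)\bigr)^{-1}H(t)\bigl(S_2(t)\bigr)^{-\top},
\end{align*}
which is precisely the Gauss--Borel factorization of $M(t)$ given in the statement preceding the proposition.

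Then I would invoke the defining relation $M(t)=V_1^{(0)}(t_1)\, M\,\bigl(V_2^{(0)}(t_2)\bigr)^{-\top}$ of the perturbed moment matrix to substitute and obtain
\begin{align*}
\bigl(V_1(t)\bigr)^{-1}\tilde V_2(t)
&=\bigl(V_1^{(0)}(t_1)\bigr)^{-1}\,V_1^{(0)}(t_1)\,M\,\bigl(V_2^{(0)}(t_2)\bigr)^{-\top}\bigl(V_2^{(0)}(t_2)\bigr)^{\top}=M,
\end{align*}
where the two pairs of undressed wave matrices cancel in sequence.

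There is no real obstacle: the content of the statement is that the two descriptions of $M(t)$ (via the flows applied to the initial $M$, and via the block $LU$ factorization $S_1^{-1}HS_2^{-\top}$) are compatible in precisely the way needed for the identity to hold. The only thing to be slightly careful about is keeping track of transposes and inverses of $V_2^{(0)}$, but nothing more delicate than the definitions themselves is required.
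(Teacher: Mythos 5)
Your proof is correct and follows exactly the route the paper intends: the paper's own proof is the one-line remark that the identity ``is a consequence of the Gaussian factorization,'' and your computation simply spells out that remark by substituting the definitions of $V_1$, $\tilde V_2$, $\tilde S_2$, recognizing the factorization $M(t)=(S_1(t))^{-1}H(t)(S_2(t))^{-\top}$ in the middle, and cancelling the undressed wave factors against $M(t)=V_1^{(0)}(t_1)\,M\,(V_2^{(0)}(t_2))^{-\top}$. No gap; same approach, just written out in full.
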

\begin{proof}
It is a consequence of  the Gaussian factorization.
\end{proof}
Given a semi-infinite matrix $A$ we have unique splitting $A=A_++A_-$ where $A_+$ is an upper  triangular block matrix while is $A_-$ a  strictly lower triangular block matrix.
\begin{pro}\label{pro:evolution S}
	The following equations hold
	\begin{align*}
	\frac{\partial S_1}{\partial t_{1,j,a}}(S_1)^{-1}&=-\Big(S_1E_{a,a}\Lambda^{j} (S_1)^{-1}\Big)_-, &
	\frac{\partial S_1}{\partial t_{2,j,a}}(S_1)^{-1}&=\Big(\tilde S_2E_{a,a}\big(\Lambda^{\top}\big)^j (\tilde S_2)^{-1}\Big)_-,\\
	\frac{\partial \tilde S_2}{\partial t_{1,j,a}}(\tilde S_2)^{-1}&=\Big(S_1E_{a,a}\Lambda^{j} (S_1)^{-1}\Big)_+, &
	\frac{\partial \tilde S_2}{\partial t_{2,j,a}}(\tilde S_2)^{-1}&=-\Big(\tilde S_2E_{a,a}\big(\Lambda^{\top}\big)^j (\tilde S_2)^{-1}\Big)_+.
	\end{align*}
\end{pro}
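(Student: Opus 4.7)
The plan is to use a classical Sato-style argument based on the time-independence of $M$. Starting from the factorization identity \eqref{eq:central}, namely $V_1(t)\,M=\tilde V_2(t)$, I differentiate with respect to each time variable and exploit the fact that $M$ carries no $t$-dependence to transform the resulting identity into a Lie-algebra splitting problem.

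Concretely, for any flow parameter $t_{i,j,a}$ one gets $\partial_{t_{i,j,a}}V_1\cdot M=\partial_{t_{i,j,a}}\tilde V_2$, which on using $M=V_1^{-1}\tilde V_2$ and $\tilde V_2^{-1}=M^{-1}V_1^{-1}$ rewrites as
\begin{align*}
\frac{\partial V_1}{\partial t_{i,j,a}}V_1^{-1}=\frac{\partial \tilde V_2}{\partial t_{i,j,a}}\tilde V_2^{-1}.
\end{align*}
Next I compute each side using $V_1=S_1 V_1^{(0)}$ and $\tilde V_2=\tilde S_2(V_2^{(0)})^{\top}$, together with $\partial_{t_{1,j,a}}V_1^{(0)}=E_{a,a}\Lambda^{j}V_1^{(0)}$ and $\partial_{t_{2,j,a}}(V_2^{(0)})^{\top}=(V_2^{(0)})^{\top}E_{a,a}(\Lambda^{\top})^{j}$. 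Here I use the observation that $E_{a,a}$ (regarded as a constant block-diagonal semi-infinite matrix) commutes with both $\Lambda$ and $\Lambda^{\top}$, and that $V_1^{(0)}$, $(V_2^{(0)})^{\top}$ are built out of these commuting objects, so conjugation by $V_i^{(0)}$ of $E_{a,a}\Lambda^{j}$ or $E_{a,a}(\Lambda^{\top})^{j}$ is trivial. This reduces the identity to
\begin{align*}
\frac{\partial S_1}{\partial t_{1,j,a}}S_1^{-1}+S_1 E_{a,a}\Lambda^{j}S_1^{-1}&=\frac{\partial \tilde S_2}{\partial t_{1,j,a}}\tilde S_2^{-1}, \\
\frac{\partial S_1}{\partial t_{2,j,a}}S_1^{-1}&=\frac{\partial \tilde S_2}{\partial t_{2,j,a}}\tilde S_2^{-1}+\tilde S_2 E_{a,a}(\Lambda^{\top})^{j}\tilde S_2^{-1}.
\end{align*}

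The final step is to project each equation onto the two complementary subalgebras. Since $S_1$ is lower block uni-triangular, $\partial_{t}S_1\cdot S_1^{-1}$ is strictly lower block triangular; and since $\tilde S_2=H(S_2)^{-\top}$ is upper block triangular, $\partial_{t}\tilde S_2\cdot \tilde S_2^{-1}$ is upper block triangular. Writing each displayed identity as $A_-=B_+$-type equation and taking $(\cdot)_-$ and $(\cdot)_+$ parts yields, respectively,
\begin{align*}
\frac{\partial S_1}{\partial t_{1,j,a}}S_1^{-1}&=-\bigl(S_1 E_{a,a}\Lambda^{j}S_1^{-1}\bigr)_-, & \frac{\partial \tilde S_2}{\partial t_{1,j,a}}\tilde S_2^{-1}&=\bigl(S_1 E_{a,a}\Lambda^{j}S_1^{-1}\bigr)_+,\\
\frac{\partial S_1}{\partial t_{2,j,a}}S_1^{-1}&=\bigl(\tilde S_2 E_{a,a}(\Lambda^{\top})^{j}\tilde S_2^{-1}\bigr)_-, & \frac{\partial \tilde S_2}{\partial t_{2,j,a}}\tilde S_2^{-1}&=-\bigl(\tilde S_2 E_{a,a}(\Lambda^{\top})^{j}\tilde S_2^{-1}\bigr)_+,
\end{align*}
which is exactly the stated proposition.

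The only genuinely delicate point is the bookkeeping in the second case: one must check that $(V_2^{(0)})^{\top}$ truly commutes with $E_{a,a}(\Lambda^{\top})^{j}$, i.e. that the $t_{2,k}$ and the $\Lambda^{\top}$ powers appearing in the exponent commute with the block $E_{a,a}$ and among themselves. This is immediate because the $t_{2,k}$ are constant diagonal $p\times p$ blocks replicated along the block diagonal and hence commute with both $E_{a,a}$ and any power of $\Lambda^{\top}$. Everything else is a routine triangular splitting, and the result follows.
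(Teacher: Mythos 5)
Your proposal is correct and follows essentially the same route as the paper: differentiate the identity $(V_1)^{-1}\tilde V_2=M$ to get $\partial V_1\,V_1^{-1}=\partial\tilde V_2\,\tilde V_2^{-1}$, compute each side via $V_1=S_1V_1^{(0)}$ and $\tilde V_2=\tilde S_2(V_2^{(0)})^{\top}$, and split into strictly lower and upper block-triangular parts. Your explicit verification that $E_{a,a}\Lambda^{j}$ and $E_{a,a}(\Lambda^{\top})^{j}$ commute with the undressed wave matrices is a detail the paper leaves implicit, but the argument is the same.
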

\begin{proof}
	Taking right derivatives  of \eqref{eq:central} yields
\begin{align*}
\frac{\partial V_1}{\partial t_{i,j,a}}(V_1)^{-1}&=\frac{\partial \tilde V_2}{\partial t_{i,j,a}}(\tilde V_2)^{-1}, & i&\in\{1,2\}, & j&\in\Z_+,
\end{align*}
where
\begin{align*}
\frac{\partial V_1}{\partial t_{1,j,a}}(V_1)^{-1}&=\frac{\partial S_1}{\partial t_{1,j,a}}(S_1)^{-1}+S_1E_{a,a}\Lambda^{j} (S_1)^{-1}, &
\frac{\partial V_1}{\partial t_{2,j,a}}(V_1)^{-1}&=\frac{\partial S_1}{\partial t_{2,j,a}}(S_1)^{-1},\\
\frac{\partial \tilde V_2}{\partial t_{1,j,a}}(\tilde V_2)^{-1}&=\frac{\partial \tilde S_2}{\partial t_{1,j,a}}(\tilde S_2)^{-1}, &
\frac{\partial \tilde V_2}{\partial t_{2,j,a}}(\tilde V_2)^{-1}&=\frac{\partial \tilde S_2}{\partial t_{2,j,a}}(\tilde S_2)^{-1}+\tilde S_2E_{a,a}\big(\Lambda^{\top}\big)^j (\tilde S_2)^{-1},
\end{align*}
and the result follows immediately.
\end{proof}
As a consequence, we derive
\begin{pro}
The multicomponent 2D Toda lattice equations
\begin{align*}
\frac{\partial}{\partial t_{2,1,b}}\Big(\frac{\partial H_k}{\partial t_{1,1,a}}(H_k)^{-1}\Big)+E_{a,a}H_{k+1}E_{b,b}(H_{k})^{-1}-H_kE_{b,b}(H_{k-1})^{-1}E_{a,a}=0.
\end{align*}
hold.
\end{pro}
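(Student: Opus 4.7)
My plan is to derive the Toda equation as a compatibility condition for the flows in Proposition \ref{pro:evolution S}, specialized to $j=1$. I will identify $(k,k)$ blocks carefully, using that $\tilde S_2 = H (S_2)^{-\top}$ is \emph{upper} triangular with diagonal blocks $(\tilde S_2)_{k,k} = H_k$ (since $S_2$ is lower unitriangular).

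First, from the flow $\frac{\partial \tilde S_2}{\partial t_{1,1,a}} (\tilde S_2)^{-1} = \big(S_1 E_{a,a}\Lambda (S_1)^{-1}\big)_+$, reading off the $(k,k)$ block (which belongs to $(\cdot)_+$) yields
\begin{align*}
\frac{\partial H_k}{\partial t_{1,1,a}} (H_k)^{-1} = \big(S_1 E_{a,a}\Lambda (S_1)^{-1}\big)_{k,k}.
\end{align*}
Denote $X := S_1 E_{a,a}\Lambda (S_1)^{-1}$. Since $S_1$ is lower unitriangular and $\Lambda$ is the shift, $X$ is block Hessenberg (upper triangular plus one subdiagonal), with $X_{k,k+1}=E_{a,a}$ and $X_{k,l}=0$ for $l>k+1$. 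Similarly, setting $Y := \tilde S_2 E_{b,b}(\Lambda^\top)(\tilde S_2)^{-1}$, the upper triangularity of $\tilde S_2$ with diagonal $H_k$ forces $Y_-$ to be supported on the first subdiagonal with entries $(Y_-)_{k,k-1} = H_k E_{b,b} (H_{k-1})^{-1}$.

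Next I differentiate the identity for $\partial H_k/\partial t_{1,1,a}$ with respect to $t_{2,1,b}$. Using $\frac{\partial S_1}{\partial t_{2,1,b}} (S_1)^{-1} = Y_-$ from Proposition \ref{pro:evolution S}, and $\frac{\partial (S_1)^{-1}}{\partial t_{2,1,b}} = -(S_1)^{-1} Y_-$, the chain rule gives
\begin{align*}
\frac{\partial X}{\partial t_{2,1,b}} = [Y_-, X].
\end{align*}
Taking the $(k,k)$ block, only two terms survive thanks to the sparsity of $Y_-$ and of $X$ near the diagonal:
\begin{align*}
(Y_- X)_{k,k} &= (Y_-)_{k,k-1} X_{k-1,k} = H_k E_{b,b}(H_{k-1})^{-1} E_{a,a}, \\
(X Y_-)_{k,k} &= X_{k,k+1} (Y_-)_{k+1,k} = E_{a,a} H_{k+1} E_{b,b}(H_k)^{-1}.
\end{align*}
Combining gives exactly
\begin{align*}
\frac{\partial}{\partial t_{2,1,b}}\!\Big(\frac{\partial H_k}{\partial t_{1,1,a}}(H_k)^{-1}\Big) = H_k E_{b,b}(H_{k-1})^{-1}E_{a,a} - E_{a,a} H_{k+1} E_{b,b}(H_k)^{-1},
\end{align*}
which after transposition of signs is the stated Toda equation.

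The only real obstacle is the block bookkeeping: verifying the precise Hessenberg pattern of $X$ and the subdiagonal-only pattern of $Y_-$, together with the identifications $X_{k,k+1}=E_{a,a}$ and $(Y_-)_{k,k-1}=H_k E_{b,b} H_{k-1}^{-1}$, which rely on $(S_1)_{k,k}=I_p$ and $(\tilde S_2)_{k,k}=H_k$. Once these are checked, the compatibility calculation collapses to the two-term commutator above and the Toda equation falls out without any further computation.
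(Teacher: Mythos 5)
Your argument is correct and is essentially the computation the paper performs: the paper's proof reads off $\partial H_k/\partial t_{1,1,a}\,(H_k)^{-1}=\beta_kE_{a,a}-E_{a,a}\beta_{k+1}$ and $\partial\beta_k/\partial t_{2,1,b}=H_kE_{b,b}(H_{k-1})^{-1}$ from Proposition \ref{pro:evolution S}, with $\beta_k$ the first subdiagonal block of $S_1$, which is precisely your identification of $X_{k,k}$ and $(Y_-)_{k,k-1}$ followed by the diagonal block of $[Y_-,X]$. The only blemish is the parenthetical describing $X$ as ``upper triangular plus one subdiagonal'': it is in fact lower triangular plus one superdiagonal, as your own formulas $X_{k,k+1}=E_{a,a}$ and $X_{k,l}=0$ for $l>k+1$ correctly state, so nothing in the argument is affected.
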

\begin{proof}
	From Proposition \ref{pro:evolution S} we get
	\begin{align*}
	\frac{\partial H_k}{\partial t_{1,1,a}}(H_k)^{-1}&=\beta_kE_{a,a}-E_{a,a}\beta_{k+1},&
	\frac{\partial \beta_k}{\partial t_{2,1,b}}&=H_kE_{b,b}(H_{k-1})^{-1},
	\end{align*}
	where $\beta_k\in\R^{p\times p}$, $k=1,2,\dots$,  are the first subdiagonal coefficients in $S_1$.
\end{proof}

The multi-component Toda and KP hierarchies were introduced in \phantomsection\cite{ueno}.  In \phantomsection\cite{manas-1,manas0} its relevance in integrable aspects of differential geometry was emphasized, and in \phantomsection\cite{kac} a representation approach was developed, while in \phantomsection\cite{adler,amu} it was used in relation with multiple orthogonality.
A comprehensive approach to multi-component 2D Toda hierarchy  with applications in dispersionless integrability or generalized orthogonal polynomials  can be found in \phantomsection\cite{manas1,manas2,manas3}.

If we introduce the total flows given by the derivatives
\begin{align*}
\partial_{i,j}:=\sum_{a=1}^{p}\frac{\partial}{\partial t_{i,j,a}}
\end{align*}
we get the  non-Abelian 2D Toda lattice
\begin{align*}
\partial_{2,1}\big(\partial_{1,1}(H_k)\cdot (H_k)^{-1}\big)+H_{k+1}(H_{k})^{-1}-H_k(H_{k-1})^{-1}=0.
\end{align*}
The non-Abelian Toda lattice was introduced in the context of string theory by Polyakov, \phantomsection\cite{polyakov1,polyakov2}, and then studied under the inverse spectral transform by Mikhailov \phantomsection\cite{mikhailov} and Riemann surface theory by Krichever \phantomsection\cite{krichever}. The Darboux transformations were considered in \phantomsection\cite{salle} and later in \phantomsection\cite{nimmo}.

The non-Abelian 2D Toda lattice hierarchy is a reduction of the multicomponent hierarchy  by taking the diagonal time matrices $t_{i,j}=\diag(t_{i,j,1},\dots ,t_{i,j,p})$ proportional to the identity; i.e.,
\begin{align*}
t_{i,j}&\mapsto t_{i,j}I_p, & t_{i,j}&\in \R.
\end{align*}

These equations are just the first members  of an infinite set of nonlinear partial differential equations, an integrable hierarchy. Its elements are given by
\begin{defi}
	The partial, Lax and  Zakharov--Shabat matrices are given by
\begin{align*}
\Pi_{1,a}&:=S_1 E_{a,a} (S_1)^{-1}, &\Pi_{2,a}&:=\tilde S_2 E_{a,a} (\tilde S_2)^{-1}, \\
L_{1}&:=S_1\Lambda(S_1)^{-1}, &
 L_{2}&:=\tilde S_2\Lambda^{\top} (\tilde S_2)^{-1},\\
B_{1,j,a}&:=\big(\Pi_{1,a}(L_{1})^j\big)_+, & B_{2,j,a}&:=\big(\Pi_{2,a}(L_{2})^j\big)_-.
\end{align*}
\end{defi}
\begin{pro}[The integrable hierarchy]
The wave matrices obey the evolutionary linear systems
\begin{align*}
\frac{\partial V_1}{\partial t_{1,j,a}}&= B_{1,j,a}V_1,&\frac{\partial V_1}{\partial t_{2,j,a}}&= B_{2,j,a}V_1,\\
\frac{\partial \tilde V_2}{\partial t_{1,j,a}}&=B_{1,j,a}\tilde V_2, &\frac{\partial \tilde V_2}{\partial t_{2,j,a}}&= B_{2,j,a}\tilde V_2,
\end{align*}
the partial and Lax matrices are subject to the following  \emph{Lax equations}
\begin{align*}
\frac{\partial \Pi_{i',a'}}{\partial t_{i,j,a}}&=\Big[B_{i,j,a}, \Pi_{i',a'}\Big], &
\frac{\partial L_{i'}}{\partial t_{i,j,a}}&=\Big[B_{i,j,a}, L_{i'}\Big],
\end{align*}
and Zakharov--Sabat matrices fulfill the following  \emph{Zakharov--Shabat equations}
                         \begin{align*}
                       \frac{\partial B_{i',j',a'}}{\partial t_{i,j,a}}- \frac{\partial B_{i,j,a}}{\partial t_{i',j',a'}}+\big[B_{i,j,a},B_{i',j',a'}\big]=0.
                         \end{align*}
\end{pro}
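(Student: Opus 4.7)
The plan is to derive all three sets of equations from the single factorization identity $V_1^{-1}\tilde V_2=M$ together with the flow formulas from Proposition \ref{pro:evolution S}. The linear systems come first; the Lax and Zakharov--Shabat equations are then immediate corollaries.

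I would begin by observing that since $t_{i,j}$ is diagonal and acts blockwise, $V_i^{(0)}$ commutes with both $\Lambda$ (respectively $\Lambda^\top$) and with every $E_{a,a}$. Consequently
\begin{align*}
L_1&=V_1\Lambda V_1^{-1}, & \Pi_{1,a}&=V_1 E_{a,a} V_1^{-1}, & L_2&=\tilde V_2\Lambda^\top \tilde V_2^{-1}, & \Pi_{2,a}&=\tilde V_2 E_{a,a}\tilde V_2^{-1}.
\end{align*}
Next, compute $\partial V_1/\partial t_{1,j,a}$ by writing $V_1=S_1V_1^{(0)}$, using $\partial V_1^{(0)}/\partial t_{1,j,a}=E_{a,a}\Lambda^j V_1^{(0)}$, and substituting the first equation of Proposition \ref{pro:evolution S}. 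The strictly lower and full contributions combine into $\bigl(S_1E_{a,a}\Lambda^j S_1^{-1}\bigr)_+ V_1=(\Pi_{1,a}L_1^j)_+V_1=B_{1,j,a}V_1$. For the flow $\partial V_1/\partial t_{2,j,a}$ note that $V_1^{(0)}$ does not depend on $t_2$, so only the $S_1$ piece contributes; the second formula of Proposition \ref{pro:evolution S} yields directly $B_{2,j,a}V_1$. The analogous computation with $\tilde V_2=\tilde S_2(V_2^{(0)})^\top$ and the two remaining formulas of Proposition \ref{pro:evolution S} gives the equations for $\tilde V_2$; here the key is that the positive and full parts combine to the same $B_{i,j,a}$, which is exactly what ensures consistency with $V_1^{-1}\tilde V_2=M$ (a constant matrix).

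With the linear systems in hand, the Lax equations are obtained by differentiating the dressing expressions displayed above. For instance,
\begin{align*}
\frac{\partial L_1}{\partial t_{i,j,a}}
=\frac{\partial V_1}{\partial t_{i,j,a}}\Lambda V_1^{-1}-V_1\Lambda V_1^{-1}\frac{\partial V_1}{\partial t_{i,j,a}}V_1^{-1}
=B_{i,j,a}L_1-L_1 B_{i,j,a}=[B_{i,j,a},L_1],
\end{align*}
and the same calculation applied to $\Pi_{1,a}$, $\Pi_{2,a}$ and $L_2$, using the dressing by $\tilde V_2$ where appropriate, yields the remaining Lax equations. Finally, the Zakharov--Shabat equations follow from the compatibility of the linear systems: cross-differentiating $\partial V_1/\partial t_{i,j,a}=B_{i,j,a}V_1$ gives
\begin{align*}
\Bigl(\frac{\partial B_{i',j',a'}}{\partial t_{i,j,a}}-\frac{\partial B_{i,j,a}}{\partial t_{i',j',a'}}+[B_{i,j,a},B_{i',j',a'}]\Bigr)V_1=0,
\end{align*}
and invertibility of $V_1$ gives the stated identity.

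The main delicate point, which I would verify carefully, is that the same operator $B_{i,j,a}$ governs the evolution of both wave matrices $V_1$ and $\tilde V_2$: this relies on the precise $\pm$ splitting in Proposition \ref{pro:evolution S} and on the fact that $S_1 E_{a,a}\Lambda^j S_1^{-1}=\tilde S_2 E_{a,a}\Lambda^j S_1^{-1}\cdot S_1\tilde S_2^{-1}$-type manipulations together with the factorization collapse to the clean ``positive part on one side, negative part on the other'' dichotomy. Once this accounting is correct the rest is a mechanical application of the dressing formalism.
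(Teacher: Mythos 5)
Your proposal is correct and takes exactly the route the paper intends: the paper's own proof is the single line ``Follows from Proposition \ref{pro:evolution S}'', and your argument is precisely the expansion of that line --- the wave-matrix equations from the evolution formulas for $S_1,\tilde S_2$ combined with the explicit $t$-dependence of $V_1^{(0)}$ and $(V_2^{(0)})^\top$ so that $-(\cdot)_-+(\cdot)=(\cdot)_+$ and $-(\cdot)_++(\cdot)=(\cdot)_-$ produce the same $B_{i,j,a}$ for both wave matrices, then the Lax equations by differentiating the undressed conjugations $L_1=V_1\Lambda V_1^{-1}$, etc., and Zakharov--Shabat from cross-differentiation and invertibility of $V_1$. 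Nothing essential is missing.
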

      \begin{proof}
      	Follows from Proposition \ref{pro:evolution S}.
      \end{proof}
Given two semi-infinite block matrices      $A,B$ the notation $[A,B]=AB-BA$ stands for the usual commutator of matrices.

A crucial observation, regarding orthogonal polynomials, must be pointed out. When orthogonal polynomials are involved, and the matrices to  factorize are block Hankel, equivalently  $\Lambda M=M\Lambda^\top$, we get $   L_1=S_1\Lambda S_1^{-1}= \tilde S_2\Lambda^\top \tilde S_2^{-1}=L_2$.   As the reader may have noticed the Lax matrices $L_1$ and $L_2$ are, by construction,  lower and upper Hessenberg  block matrices, respectively.    However, when the Hankel property holds both Lax matrices are equal,
\begin{align*}
L_1=L_2,
\end{align*}
and, therefore, we are faced to a  tridiagonal block matrix; i.e., a Jacobi block matrix.  Moreover, this Hankel condition implies an invariance property under the flows introduced, as we have that $M(t)= V_1^{(0)}(t_1-t_2)M$, i.e., there are only one type of flows. This condition  also implies that for the total flows we have
\begin{align*}
(\partial_{1,j}+ \partial_{2,j})V_1 &=V_1 \Lambda^j, & (\partial_{1,j}+\partial_{2,j} )\tilde V_2 &=\tilde V_2 (\Lambda^\top)^j,\\
(\partial_{1,j}+ \partial_{2,j})L_1 &=0, & (\partial_{1,j}+\partial_{2,j} )L_2 &=0.
\end{align*}
Therefore, in the block Hankel case we are dealing with  the  multicomponent 1D Toda hierarchy.

\subsection{The Christoffel transformation for the non-Abelian 2D Toda hierarchy}

The idea is to follow  what we did  in \S \ref{S:connection} and consider an initial condition $\hat M$ at $t=0$, this is
	\begin{align*}
	\hat M=W(\Lambda)M
	\end{align*}
for a matrix polynomial $W(x)\in\R^{p\times p}[x]$.
Observe that using the scalar times $t_{i,j}\in\R$ of the non-Abelian flows determined by
 \begin{align*}
V_i^{(0)}&:=\exp\Big(\sum_{j=0}^\infty t_{i,j}\Lambda^j\Big), & i&=1,2,
\end{align*}
 the perturbed matrix is given by
\begin{align*}
\hat M(t)=& V_1^{(0)}(t_1)\hat M\big(V_2^{(0)}(t_2)\big)^{-\top}\\
=&W(\Lambda)M(t).
\end{align*}
Here we have used that $[W(\Lambda),V^{(0)}_1(t)]=0$, $\forall t_{1,j}\in\R$. Let us stress that we could  request only   $t_{1,j}$ to be scalars an let  $t_{2,j}$ to be diagonal matrices. Despite this is a more general situation,  we prefer to show how the method works in this simpler scenario.

Assuming that the block Gauss factorization hold, we proceed as in  \S \ref{S:connection}
and  introduce the resolvents
	\begin{align*}
	\omega^{[1]}(t)&:=\hat S_1(t)W(\Lambda)  \big(S_1(t)\big)^{-1}, & \omega^{[2]}(t)&:=(S_2(t)\big(\hat S_2(t)\big)^{-1})^\top.
	\end{align*}

From the $LU$ factorization we get
	\begin{align*}
	\big(\hat S_1(t)\big)^{-1}\hat H(t)\big(\hat S_2(t)\big)^{-\top}=W(\Lambda)  \big(S_1(t)\big)^{-1} H(t) \big(S_2(t)\big)^{-\top},
	\end{align*}
	so that
	\begin{align*}
	\hat H(t)( S_2(t)\big(\hat S_2(t)\big)^{-1})^\top=\hat S_1(t)W(\Lambda)  \big(S_1(t)\big)^{-1} H(t),
	\end{align*}
	and, consequently,
		\begin{align*}
		\hat H(t) \omega^{[2]}(t)=\omega^{[1]}(t) H(t)
		\end{align*}
	 holds.
Hence, as in the static case where the variable $t$ does not appear,  we have that this $t$-dependent resolvent matrix has a band  block upper triangular structure
	\begin{align*}
	\omega^{[1]}=\begin{bmatrix}
	\omega^{[1]}_{0,0} &\omega^{[1]}_{0,1} &\omega^{[1]}_{0,2}  &\dots &\omega^{[1]}_{0,N-1} &I_p&0&0&\dots \\
	0&\omega^{[1]}_{1,1} &\omega^{[1]}_{1,2} &\dots &\omega^{[1]}_{1,N-1} &\omega^{[1]}_{1,N} &I_p&0&\dots\\
	0&0&\omega^{[1]}_{2,2} &\dots &\omega^{[1]}_{2,N-1} &\omega^{[1]}_{2,N} &\omega^{[1]}_{2,N+1} &I_p&\ddots\\
	&\ddots&\ddots &\ddots& &&&\ddots&\ddots
	\end{bmatrix}
	\end{align*}
	with
	\begin{align*}
	\hat H_k(t)=\omega^{[1]}_{k,k}(t)H_k(t),
	\end{align*}
and the connection formulas described in Proposition \ref{pro:relation_omega} hold in this wider context.

Moreover, if $W(x)$ is a monic polynomial we can ensure that  the Christoffel formula is also fulfilled for the non-Abelian 2D Toda and Theorem \ref{theo:spectral} remains valid also in this scenario.
Formulas \eqref{eq:Christoffel1} and \eqref{eq:hat_H_k} hold directly and need no further explanation. However,
\eqref{eq:Christoffel2} needs the following brief discussion. The Christoffel--Darboux  kernel is defined exactly as we did  in \eqref{eq:CD kernel} but very probably there is no such a formula as the CD formula given in Proposition  \ref{pro:CD formula} is present in this scenario. However, as was shown in \phantomsection\cite{am}, there are cases, such as the multigraded reductions, where one has a generalized CD formula.

\end{document}